\documentclass[11pt, a4paper]{amsart}

\usepackage{microtype}
\usepackage{times}
\AtBeginDocument{
  \DeclareSymbolFont{AMSb}{U}{msb}{m}{n}
  \DeclareSymbolFontAlphabet{\mathbb}{AMSb}
}

\usepackage[dvipsnames]{xcolor}
\usepackage{graphicx}
\usepackage{tikz}
\usetikzlibrary{positioning, shapes.geometric}
\definecolor{lightblue}{rgb}{0.38, 0.51, 0.71} 
\definecolor{darkblue}{RGB}{17, 42, 60} 
\definecolor{red}{RGB}{175, 49, 39} 
\definecolor{yellow}{RGB}{250, 199, 100} 
\definecolor{green}{RGB}{144, 169, 84} 

\newcommand{\linearSpline}[3]{
  \draw[black, thick] (#1, 0) -- (#2, 1) -- (#3, 0);
}
\newcommand{\linearSplineVert}[3]{
  \draw[black,thick,dashed] (#2, 0) -- (#2, 1);
}

\newcommand{\quadraticSpline}[4]{
  \draw[domain=#1:#2,smooth,thick] plot (\x, {(\x - #1)^2 / (#2 - #1) / (#3 - #1)});
  \draw[domain=#2:#3,smooth,thick] plot (\x, {(#3 - \x) / (#3 - #2) * (\x - #1) / (#3 - #1) + (\x - #2) / (#3 - #2) * (#4 - \x) / (#4 - #2)});
  \draw[domain=#3:#4,smooth,thick] plot (\x, {(#4 - \x)^2 / (#4 - #3) / (#4 - #2)});
}
\newcommand{\quadraticSplineVert}[4]{
  \draw[black,thick,dashed] (#2, 0) -- (#2, {(#2 - #1) / (#3 - #1)});
  \draw[black,thick,dashed] (#3, 0) -- (#3, {(#4 - #3) / (#4 - #2)});
}

\newcommand{\cubicSpline}[5]{
  \draw[domain=#1:#2,smooth,thick] plot (\x, {(\x-#1)^3/(#2-#1)/(#3-#1)/(#4-#1)});
  \draw[domain=#2:#3,smooth,thick] plot (\x, {(#3-\x)/(#3-#2)*(\x-#1)^2/(#3-#1)/(#4-#1)+(\x-#2)/(#3-#2)*(#4-\x)/(#4-#2)*(\x-#1)/(#4-#1)+(\x-#2)^2/(#3-#2)/(#4-#2)*(#5-\x)/(#5-#2)});
  \draw[domain=#3:#4,smooth,thick] plot (\x, {(#4-\x)^2/(#4-#3)/(#4-#2)*(\x-#1)/(#4-#1)+(#4-\x)/(#4-#3)*(\x-#2)/(#4-#2)*(#5-\x)/(#5-#2)+(\x-#3)/(#4-#3)*(#5-\x)^2/(#5-#3)/(#5-#2)});
  \draw[domain=#4:#5,smooth,thick] plot (\x, {(#5-\x)^3/(#5-#4)/(#5-#3)/(#5-#2)});
}
\newcommand{\cubicSplineVert}[5]{
  \draw[black,thick,dashed] (#2, 0) -- (#2, {(#2-#1)^2/(#3-#1)/(#4-#1)});
  \draw[black,thick,dashed] (#3, 0) -- (#3, {(#4-#3)/(#4-#2)*(#3-#1)/(#4-#1)+(#3-#2)/(#4-#2)*(#5-#3)/(#5-#2)});
  \draw[black,thick,dashed] (#4, 0) -- (#4, {(#5-#4)^2/(#5-#3)/(#5-#2)});
}

\newcommand{\hatfun}[6]{
  \draw[thick, black] (#1, #4) -- (#3, #4);
  \draw[thick, black] (#1, #5) -- (#3, #5);
  \draw[thick, black] (#1, #6) -- (#3, #6);

  \draw[thick, black] (#1, #4) -- (#1, #6);
  \draw[thick, black] (#2, #4) -- (#2, #6);
  \draw[thick, black] (#3, #4) -- (#3, #6);

  \filldraw[lightblue] (#1, #4) circle (2pt);
  \filldraw[lightblue] (#1, #5) circle (2pt);
  \filldraw[lightblue] (#1, #6) circle (2pt);
  \filldraw[lightblue] (#2, #4) circle (2pt);
  \filldraw[green] (#2, #5) circle (2pt);
  \filldraw[lightblue] (#2, #6) circle (2pt);
  \filldraw[lightblue] (#3, #4) circle (2pt);
  \filldraw[lightblue] (#3, #5) circle (2pt);
  \filldraw[lightblue] (#3, #6) circle (2pt);
}

\usepackage{caption}
\usepackage{subcaption}

\graphicspath{ {img/} {svg-inkscape/} }

\usepackage[a4paper, pdftex, left=2cm, top=2cm, right=2cm, bottom=2cm]{geometry}
\usepackage[final]{pdfpages}
\usepackage{changepage}
\usepackage[foot]{amsaddr}

\usepackage{url}
\usepackage{hyperref}
\hypersetup{
    breaklinks=true,
    bookmarksopen=true,
    pdftitle={Energy minimisation using overlapping tensor-product free-knot B-splines}, 
    pdfauthor={Alexandre Magueresse, Santiago Badia}, 
    pdfsubject={Energy minimisation using overlapping tensor-product free-knot B-splines}, 
    pdfkeywords={Energy minimisation, Tensor-product B-splines, Free-knot B-splines, Mesh adaptivity}, 
    colorlinks=true,
    linkcolor=blue,
    citecolor=blue,
    filecolor=blue,
    urlcolor=blue
}
\newcommand{\fig}[1]{Fig.~\ref{fig:#1}}

\newcommand{\alg}[1]{\hyperref[alg:#1]{Algorithm~\ref*{alg:#1}}}
\newcommand{\assum}[1]{\hyperref[assumption:#1]{Assumption~\ref*{assumption:#1}}}
\newcommand{\subassum}[2]{\hyperref[assumption:#1]{Assumption~\ref*{assumption:#1}.#2}}
\newcommand{\lem}[1]{\hyperref[lem:#1]{Lemma~\ref*{lem:#1}}}
\newcommand{\corol}[1]{\hyperref[corol:#1]{Corollary~\ref*{corol:#1}}}
\newcommand{\prop}[1]{\hyperref[prop:#1]{Proposition~\ref*{prop:#1}}}
\newcommand{\thm}[1]{\hyperref[thm:#1]{Theorem~\ref*{thm:#1}}}
\newcommand{\sect}[1]{\hyperref[sect:#1]{Section~\ref*{sect:#1}}}
\newcommand{\subsect}[1]{\hyperref[subsect:#1]{Subsection~\ref*{subsect:#1}}}
\newcommand{\app}[1]{\hyperref[app:#1]{Appendix~\ref*{app:#1}}}

\newcommand{\algopt}{{\color{purple} Minimisation Algorithm}}

\newcommand{\lemlocal}{{\color{purple} Lemma~3}} 
\newcommand{\lemglobal}{{\color{purple} Lemma~5}} 
\newcommand{\corolcea}{{\color{purple} Corollary~1}} 
\newcommand{\lemregularity}{{\color{purple} Lemma~6}} 
\newcommand{\lemregularityreduced}{{\color{purple} Lemma~7}} 

\newcommand{\assumspd}{{\color{purple} Assumption~1}} 
\newcommand{\assumspace}{{\color{purple} Assumption~2}} 
\newcommand{\assumlinboundreg}{{\color{purple} Assumption~3}} 
\newcommand{\assumenergyreg}{{\color{purple} Assumption~4a}} 
\newcommand{\assumnonlinboundreg}{{\color{purple} Assumption~4b}} 
\newcommand{\assumdircon}{{\color{purple} Assumption~5}} 

\usepackage{csquotes}
\usepackage[backend=biber, defernumbers=false, maxbibnames=5, style=numeric-comp, sorting=none, isbn=false, bibencoding=utf8, safeinputenc, url=false, doi=true, giveninits=true]{biblatex}

\usepackage{float}
\usepackage{framed}
\usepackage{booktabs}
\usepackage{multirow}
\usepackage[inline]{enumitem}

\usepackage{amsthm}
\newtheorem*{remark}{Remark}

\newtheorem{lemma}{Lemma}
\newtheorem{corollary}{Corollary}

\newtheorem{definition}{Definition}

\usepackage{amsmath, amsfonts, amssymb, amscd}
\usepackage{mathtools}

\newcommand{\mb}[1]{\boldsymbol{#1}}
\newcommand{\mc}[1]{\mathcal{#1}}
\newcommand{\md}[1]{\mathbb{#1}}

\let\epsilon\varepsilon
\let\phi\varphi

\newcommand{\NN}{\mathbb{N}}
\newcommand{\RR}{\mathbb{R}}

\newcommand{\oo}[2]{\left(#1, #2\right)}

\newcommand{\cc}[2]{\left[#1, #2\right]}
\newcommand{\co}[2]{\left[#1, #2\right)}
\newcommand{\rg}[2]{\left\{#1 : #2\right\}}


\newcommand{\DX}{\nabla_{\md{X}}}

\newcommand{\abs}[1]{{\left\vert #1 \right\vert}}
\newcommand{\norm}[1]{{\vert\kern-0.25ex\vert #1 \vert\kern-0.25ex\vert}}
\newcommand{\inner}[2]{{\left\langle #1, #2 \right\rangle}}

\newcommand{\remBeg}{\triangleleft}
\newcommand{\remEnd}{\triangleright}


\newcommand{\red}[1]{#1^{\mathrm{red}}}

\newcommand{\dd}[1]{\ \mathrm{d} #1}

\usepackage{acronym}
\acrodef{pde}[PDE]{partial differential equation}
\acrodef{fem}[FEM]{finite element method}
\acrodef{iga}[IGA]{isogeometric analysis}
\acrodef{pinn}[PINN]{physics-informed neural network}
\acrodef{dof}[DOF]{degree of freedom}
\acrodefplural{dof}{degrees of freedom}
\acrodef{cg}[CG]{conjugate gradient}

\title[Energy minimisation using overlapping tensor-product free-knot B-splines]{Energy minimisation using overlapping \\ tensor-product free-knot B-splines}
\date{\today}
\keywords{Energy minimisation, Tensor-product B-splines, Free-knot B-splines, Mesh adaptivity}

\author[A. Magueresse]{Alexandre Magueresse$^{\dagger\ast}$}
\address{$^\dagger$School of mathematics\\Monash university\\Clayton\\Victoria 3800\\Australia}
\email{alexandre.magueresse@monash.edu}
\author[S. Badia]{Santiago Badia$^{\dagger}$}
\email{santiago.badia@monash.edu}
\thanks{$^\ast$Corresponding author}

\addbibresource{article.bib}

\begin{document}

\begin{abstract}
  Accurately solving \acp{pde} with localised features requires refined meshes that adapt to the solution. Traditional numerical methods, such as finite elements, are linear in nature and often ineffective for such problems, as the mesh is not tailored to the solution. Adaptive strategies, such as $h$- and $p$-refinement, improve efficiency by sequentially refining the mesh based on a posteriori error estimates. However, these methods are geometrically rigid---limited to specific refinement rules---and require solving the problem on a sequence of adaptive meshes, which can be computationally expensive. Moreover, the design of effective a posteriori error estimates is problem-dependent and non-trivial.
In this work, we study a specific nonlinear approximation scheme based on overlapping tensor-product free-knot B-spline patches, where knot positions act as nonlinear parameters controlling the geometry of the discretisation. We analyse the corresponding energy minimisation problem for linear, self-adjoint elliptic \acp{pde}, showing that, under a mild mesh size condition, the discrete energy satisfies the structural properties required for the local and global convergence of the constrained optimisation scheme developed in our companion work [Magueresse, Badia (2025, arXiv:2508.17687)]. This establishes a direct connection between the two analyses: the adaptive free-knot B-spline space considered here fits into the abstract framework, ensuring convergence of projected gradient descent for the joint optimisation of knot positions and coefficients. Numerical experiments illustrate the method's efficiency and its ability to capture localised features with significantly fewer degrees of freedom than standard finite element discretisations.
\end{abstract}

\maketitle

\section{Introduction}
\label{sect:introduction}
The numerical solution of \acfp{pde} modelling physical phenomena often exhibits highly localised features, such as sharp gradients, boundary or internal layers, and singularities. These features are typically confined to small regions of the physical domain, while the solution remains smooth elsewhere. In most practical applications, analytical solutions are unavailable, so numerical methods must be employed to obtain approximate solutions. A critical requirement in this context is that the numerical method can accurately capture and resolve such local structures without incurring prohibitively high computational costs.

The \ac{fem} has long been the method of choice for the discretisation of \acp{pde}, valued for its flexibility in handling complex geometries and boundary conditions, as well as its rigorous mathematical foundation \cite{ciarlet2002finite}. However, when the \ac{fem} is applied to problems with sharp local features, it often requires either highly refined meshes or high-order polynomial bases to achieve sufficient accuracy. In both cases, the number of \acp{dof} may grow exponentially with the dimension, making the method computationally expensive. This challenge, known as the curse of dimensionality, has motivated the development of adaptive techniques that concentrate computational effort where it is most needed.

Traditional numerical methods for \acp{pde}, such as the \ac{fem}, operate within a fundamentally linear framework: the numerical approximation of the solution is expressed as a linear combination of preselected basis functions, with the coefficients chosen to be optimal in some sense. While this structure provides powerful theoretical and computational tools, it lacks the flexibility to adapt the geometry of the approximation space. To address this limitation, adaptive strategies have been developed to refine the mesh or enrich the polynomial basis in response to the solution's local features.

The most common adaptive strategies are $h$-refinement (local mesh refinement) and $p$-refinement (local polynomial enrichment), both guided by a posteriori error estimates tailored to the problem \cite{babuvska1994p}. These methods employ a posteriori error estimators to direct the refinement process and have been shown to greatly enhance efficiency while maintaining accuracy \cite{ainsworth1997posteriori}. Nevertheless, both approaches ultimately increase the number of \acp{dof}, albeit in a more targeted manner. Moreover, because they are tied to a fixed mesh topology, they remain relatively rigid and become increasingly inefficient in high dimensions, where the curse of dimensionality and lack of geometric flexibility limit their effectiveness.

An alternative form of adaptivity, known as mesh relocation or $r$-adaptivity, addresses this limitation by redistributing the nodes of the mesh to align with features of the solution, rather than introducing new elements or increasing the polynomial degree. The redistribution is typically guided by a monitor function, which is often derived from a posteriori error indicators, but may also be informed by a priori estimates involving higher-order derivatives or curvature measures of the solution or its reconstruction \cite{huang2010adaptive}. Once the monitor function is defined, a mesh functional is constructed to encode desirable properties of the mesh, such as alignment with anisotropic features, equidistribution of error indicators, and avoidance of mesh tangling. The mesh is then obtained by minimising this functional, subject to geometric and topological constraints \cite{huang2005metric}.

Despite its promise, $r$-adaptivity presents significant challenges. Ensuring mesh regularity and preventing element inversion are non-trivial tasks, and the resulting approximation space depends nonlinearly on the relocation parameters. These issues necessitate novel algorithmic strategies and theoretical analyses. Nevertheless, $r$-methods have demonstrated their potential to capture localised phenomena with fewer \acp{dof} compared to purely $h$- or $p$-based approaches \cite{budd2009adaptivity}.

The idea of adaptivity has also gained traction in the context of \acp{pinn} \cite{raissi2019physics}, which approximate \ac{pde} solutions by training neural networks to minimise a residual-based loss combining the governing equations, boundary conditions, and available data. By operating on expressive, nonlinear function spaces, \acp{pinn} avoid meshing and allow both geometric and functional adaptation. However, they face significant challenges \cite{wang2021understanding}: costly numerical integration, difficulties in enforcing boundary conditions, and non-convex optimisation landscapes often prone to ill-posedness. Theoretical guarantees on convergence remain limited \cite{de2024numerical, beck2022full}, especially compared to classical Galerkin methods. The Deep Ritz Method \cite{weinan2018deep} exemplifies another branch of nonlinear approximation for variational problems, where neural networks parameterise trial functions minimising energy functionals directly.

In contrast, methods based on free-knot splines offer a middle ground between geometric adaptivity and numerical tractability. Free-knot spline are well established in one dimension and have been primarily applied to data fitting \cite{jupp1978approximation, schwetlick1995least, beliakov2004least, kovacs2019nonlinear}. They combine smoothness, local support, and adaptive knot placement to capture local features effectively, making them powerful nonlinear approximators in 1D. Extensions to two dimensions using tensor-product B-splines have been investigated for data fitting \cite{schutze2003bivariate, deng2004optimizing, zhang2016b, yeh2020efficient}, typically relying on heuristic adaptivity or constrained nonlinear optimisation to enforce a minimum mesh size. These approaches are restricted to a single tensor-product patch and remain limited by the curse of dimensionality. More recently, \cite{omella2024r} combined tensor-product free-knot linear splines with neural networks for \ac{pde} approximation, using a spline ansatz that interpolates the network at the knot locations. Their two-stage training---first fixing the knots while training the network, then adjusting the knots with the network---showed only modest benefits from the introduction of free knots. A related method is the shallow Ritz approach of \cite{cai2024efficient}, which employs a ReLU neural network with adaptive breakpoints to emulate the space of free-knot linear splines for 1D diffusion problems. Linear and nonlinear parameters are alternately updated via a damped block Newton method, achieving near-optimal knot placement. While effective in one dimension, the method is limited to linear free-knot splines and lacks a general convergence analysis.

\subsection*{Contributions}
We introduce a framework based on tensor-product free-knot spline patches, designed to fit within the abstract convergence theory of our companion paper \cite{companion}. We also proposed a second method that makes use of sums of possibly overlapping lower-dimensional patches to overcome the rigidity of standard tensor-product spline spaces (modifying one knot affects many basis functions globally). By optimising knot placement across overlapping patches, the method achieves geometric adaptivity while maintaining computational tractability, enabling local refinement without globally increasing the number of \acp{dof}. In particular, it can emulate non-truncated hierarchical B-spline methods \cite{giannelli2012thb}.

In the single-patch case, we prove a nonlinear Céa's lemma, guaranteeing convergence to a global minimiser under a directional convexity assumption and suitable initialisation. In the multi-patch setting, we establish local convergence results; however, global convergence is harder to ensure due to the difficulty of enforcing a uniform minimum mesh size across all patches.

Compared to \acp{pinn}, free-knot spline methods offer a more structured and theoretically grounded framework, preserving compatibility with classical tools from variational analysis, approximation theory, and numerical linear algebra. They also give direct access to gradients and allow exact enforcement of boundary conditions, making them a strong candidate for adaptive numerical methods with rigorous guarantees.

\clearpage

We present a series of numerical experiments assessing the approximation capabilities of free-knot spline spaces in solving elliptic \acp{pde}. The results highlight the potential of our approach to achieve significantly better accuracy than the standard \ac{fem} using fewer \acp{dof}, demonstrating the effectiveness of both geometric adaptivity and energy-based optimisation in resolving localised features with high precision. While our experiments focus on low-dimensional settings, the method extends naturally to higher-dimensional problems.

\section{Tensor-product free-knot B-splines}
\label{sect:patches}
The convergence guarantees established in our companion work \cite{companion} are conditional on the ability to satisfy key hypotheses, namely the Lipschitz continuity of the energy gradient, directional convexity of the energy in a neighbourhood of the global minimisers, and an initialisation within that neighbourhood. In this work, we construct two nonlinear approximation spaces that fall within this framework: one based on tensor-product free-knot B-splines, and another consisting of sums of such spline patches.

We introduce tensor-product free-knot spaces by taking tensor products of univariate free-knot B-spline spaces. To this end, we first revisit the construction of univariate B-splines, recalling both the Cox--De Boor recursion and the divided difference definition, and highlighting the distinct advantages of each. This sets the stage for the multivariate construction via tensor products. Finally, we extend the framework by considering sums of such tensor-product patches rather than a single one, which enhances nonlinear approximation properties while maintaining reasonable computational complexity, particularly in higher dimensions.

For all $n \geq 1$, define
$$\md{K}_{n} \doteq \left\{\mb{\tau} \in \RR^{n}, \mb{\tau}_{1} \leq \mb{\tau}_{2} \leq \ldots \leq \mb{\tau}_{n}\right\}$$
the set of non-decreasing sequences of $n$ numbers (knot vectors). While most references introduce B-splines via their order, we refer to them via their degree (order minus one).

\subsection{Cox--De Boor recursion}

The construction of B-splines through the Cox--De Boor recursion is useful for practical implementation and to derive elementary properties of B-splines. We follow the construction of \cite{deboor1978practical} and adapt the notations to refer to B-splines via their degree. Let $n \geq 1$ and $\mb{\tau} \in \md{K}_{n}$. For $a \leq b \in \NN$, let $\rg{a}{b}$ denote the set $\{a, \ldots, b\}$. For $i \in \rg{1}{n-1}$, the $i$-th B-spline of degree $0$ on the knot vector $\mb{\tau}$ is defined by
$$B_{i, 0}(\mb{\tau})(x) = \left\{\begin{array}{cl}
        1 & \text{if } \mb{\tau}_{i} \leq x < \mb{\tau}_{i+1}, \\
        0 & \text{otherwise}.
    \end{array}\right.$$
In other words, $B_{i, 0}(\mb{\tau})$ is the indicator function of the interval $\co{\mb{\tau}_{i}}{\mb{\tau}_{i+1}}$. B-splines of higher degree are defined via the Cox--De Boor recursion
$$B_{i, p}(\mb{\tau})(x) = \frac{x - \mb{\tau}_{i}}{\mb{\tau}_{i+p} - \mb{\tau}_{i}} B_{i, p-1}(\mb{\tau})(x) + \frac{\mb{\tau}_{i+p+1} - x}{\mb{\tau}_{i+p+1} - \mb{\tau}_{i+1}} B_{i+1, p-1}(\mb{\tau})(x),$$
for all $p \geq 1$ and $i \in \rg{1}{n-(p+1)}$. By convention, any term with a zero denominator is interpreted as zero. This recursive process continues up to the desired degree.

\subsection{Divided differences}

An alternative route to defining B-splines is through divided differences, which proves particularly useful for expressing and analysing their spatial and parametric derivatives. We follow the construction of \cite{schumaker2007spline} and adapt notations to refer to B-splines via their degree. Let $p \geq 0$ and $i \in \rg{1}{n-(p+1)}$. The $i$-th B-spline of degree $p$ can also be defined as
$$B_{i, p}(\mb{\tau})(x) = (\mb{\tau}_{i+p+1} - \mb{\tau}_{i}) [\mb{\tau}_{i}, \ldots, \mb{\tau}_{i+p+1}] (\bullet - x)_{+}^{p},$$
where $(\bullet - x)_{+}$ denotes the function $y \mapsto (y - x)_{+} = \max(0, y - x)$ and $[\mb{\tau}_{i}, \ldots, \mb{\tau}_{i+p+1}]$ denotes the divided difference operator of order $p+1$ over the points $\mb{\tau}_{i}, \ldots, \mb{\tau}_{i+p+1}$. We refer to \app{toolsbsplines} for more details regarding divided references. In this definition, the convention for $p = 0$ is $(x)_{+}^{0} = H(x)$, where $H$ is the Heaviside function.

\subsection{Properties of B-splines}
\label{subsect:spline-properties}

To clarify the notation and the mathematical nature of the objects involved, we note that $B_{i, p}(\mb{\tau}): \RR \to \RR$ is a function of the spatial variable given a fixed knot vector $\mb{\tau}$. In contrast, $B_{i, p}: \md{K}_{n} \to (\RR \to \RR)$ is a mapping that takes a knot vector as input and returns a function of the spatial variable. For degrees $p \in \{1, 2, 3\}$, we refer to $B_{i, p}$ respectively as linear, quadratic, or cubic B-splines.

Given $p \geq 0$ and $\mb{\tau} \in \md{K}_{n}$, we form the space
$$\md{S}_{n, p}(\mb{\tau}) \doteq \mathrm{Span}\left(B_{i, p}(\mb{\tau}), i \in \rg{1}{n-(p+1)}\right)$$
consisting of B-splines of degree $p$ associated with the knot vector $\mb{\tau}$. Since a knot vector of size $n$ generates exactly $n-(p+1)$ B-splines of degree $p$, it is necessary to have $n \geq p + 2$ for the space $\md{S}_{n, p}(\mb{\tau})$ to be non-empty. Examples of such spaces are depicted on \fig{splinePatches}, for linear and quadratic B-splines.

\begin{figure}
    \begin{subfigure}[t]{0.48\linewidth}
        \centering
        \begin{tikzpicture}[scale=2]
            \draw[black, thick, -stealth] (-2, 0) -- (+1.6, 0);

            \linearSpline{-1.8}{-1.0}{-0.5}
            \linearSpline{-1.0}{-0.5}{-0.1}
            \linearSpline{-0.5}{-0.1}{0.3}
            \linearSpline{-0.1}{0.3}{1.4}

            \foreach \x in {-1.8, -1.0, -0.5, -0.1, 0.3, 1.4}
            \filldraw[black] (\x, 0) circle (1pt);

            \foreach \x [count=\xk] in {-1.8, -1.0, -0.5, -0.1, 0.3, 1.4}
            \node at (\x, -0.35) [anchor=south] {$\mb{\tau}_{\xk}$};
        \end{tikzpicture}
        \caption{Basis for $\md{S}_{4, 1}(\mb{\tau})$.}
        \label{fig:linearSplinePatch}
    \end{subfigure}%
    \hfill%
    \begin{subfigure}[t]{0.48\linewidth}
        \centering
        \begin{tikzpicture}[scale=2]
            \draw[black, thick, -stealth] (-2, 0) -- (+1.6, 0);

            \quadraticSpline{-1.8}{-0.7}{-0.5}{-0.1}
            \quadraticSpline{-0.7}{-0.5}{-0.1}{1.1}
            \quadraticSpline{-0.5}{-0.1}{1.1}{1.4}

            \foreach \x in {-1.8, -0.7, -0.5, -0.1, 1.1, 1.4}
            \filldraw[black] (\x, 0) circle (1pt);

            \foreach \x [count=\xk] in {-1.8, -0.7, -0.5, -0.1, 1.1, 1.4}
            \node at (\x, -0.35) [anchor=south] {$\mb{\tau}_{\xk}$};
        \end{tikzpicture}
        \caption{Basis for $\md{S}_{3, 2}(\mb{\tau})$.}
        \label{fig:quadraticSplinePatch}
    \end{subfigure}%
    \caption{Some free-knot B-spline spaces.}
    \label{fig:splinePatches}
\end{figure}
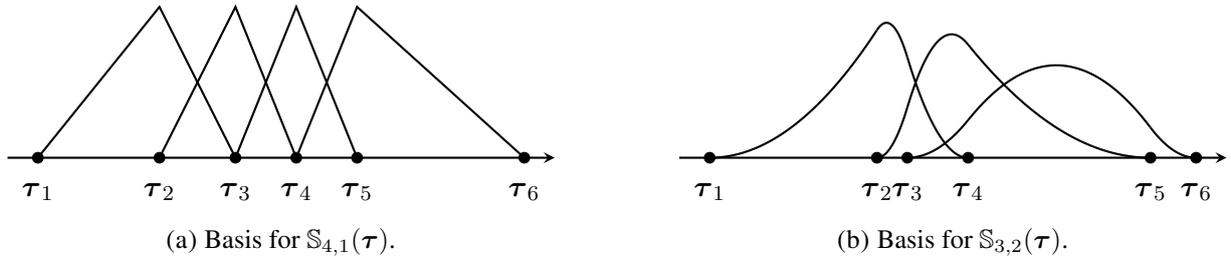

We briefly state some important properties of B-splines (see \cite{deboor1978practical} for their proof). For all $i \in \rg{1}{n - (p + 1)}$,
\begin{enumerate}
    \item $B_{i, p}(\mb{\tau})$ is piecewise polynomial of degree $p$.
    \item $B_{i, p}(\mb{\tau})$ is compactly supported in the interval $\cc{\mb{\tau}_{i}}{\mb{\tau}_{i+p+1}}$. Consequently, the only non-zero B-splines in $\cc{\mb{\tau}_{i}}{\mb{\tau}_{i+1}}$ are those $B_{j, p}(\mb{\tau})$ with indices $j \in \rg{i-(p+1)}{i} \cap \rg{1}{n-(p+1)}$.
    \item $B_{i, p}(\mb{\tau})$ takes values in $\cc{0}{1}$.
    \item For each knot $\mb{\tau}_{i + k}$ with $k \in \rg{0}{p+1}$, the B-spline $B_{i, p}(\mb{\tau})$ is $p - \mu(\mb{\tau}_{i+k})$ times continuously differentiable at $\mb{\tau}_{i+k}$, where $\mu(\mb{\tau}_{i+k}) \geq 1$ denotes the multiplicity of the knot $\mb{\tau}_{i+k}$ within the subsequence $(\mb{\tau}_{i}, \ldots, \mb{\tau}_{i+p+1})$.
    \item The collection $(B_{j, p}(\mb{\tau}))_{j \in \rg{1}{n-(p+1)}}$ is a partition of unity on the interval $\cc{\mb{\tau}_{p+1}}{\mb{\tau}_{n-p}}$.
\end{enumerate}

We briefly examine the conformity of $\md{S}_{n, p}(\mb{\tau})$ with Sobolev spaces. By properties (2) and (3), each B-spline $B_{i, p}(\mb{\tau})$ is compactly supported and bounded. With simple (non-repeating) knots, property (4) ensures this implies that each $B_{i, p}(\mb{\tau})$ belongs to $C^{p-1}(\RR)$ for all $i \in \rg{1}{n - (p+1)}$. Differentiating the Cox--De Boor recursion and proceeding by induction, one shows that the derivatives of $B_{i, p}(\mb{\tau})$ are combinations of lower-degree B-splines scaled by inverse knot spacings. Since the knot spacings are non-zero by assumption, it follows that $B_{i, p}(\mb{\tau}) \in H^{p}(\RR)$ and thus $\md{S}_{n, p}(\mb{\tau}) \subset H^{p}(\RR)$.

\subsection{Tensor-product patch}

We now extend the one-dimensional construction to higher dimensions via a tensor-product approach. In multiple dimensions, tensor-product B-spline functions are defined over rectangular regions determined by the cartesian product of univariate knot vectors. These regions, where the associated basis functions are non-zero, are commonly referred to as patches. They form the local supports of the multivariate spline basis and serve as the elementary units for constructing flexible, piecewise polynomial representations with free-knot placement.

Let $d \geq 1$ denote the dimension, and let $\mb{p}, \mb{n} \in \NN^{d}$ represent the degrees and knot vector sizes along each axis, respectively. We define the set of multivariate knot vectors and \acp{dof} as
$$\md{K}_{\mb{n}} \doteq \prod_{t = 1}^{d} \md{K}_{\mb{n}_{t}}, \qquad \md{W}_{\mb{n}} \doteq \bigotimes_{t = 1}^{d} \RR^{\mb{n}_{t} - (\mb{p}_{t} + 1)}.$$
For each $\mb{T} = (\mb{T}_{1}, \ldots, \mb{T}_{d}) \in \md{K}_{\mb{n}}^{d}$, we define the corresponding tensor-product B-spline space as
$$\md{S}_{\mb{n}, \mb{p}}(\mb{T}) \doteq \bigotimes_{t = 1}^{d} \md{S}_{\mb{n}, \mb{p}}(\mb{T}_{t}) = \mathrm{Span}\left(B_{\mb{i}, \mb{p}}(\mb{T}), \mb{i}_{t} \in \rg{1}{\mb{n}_{t} - (\mb{p}_{t} + 1)}, t \in \rg{1}{d}\right),$$
where the multivariate basis functions are given by
$$B_{\mb{i}, \mb{p}}(\mb{T})(\mb{x}) \doteq \prod_{t = 1}^{d} B_{\mb{i}_{t}, \mb{p}_{t}}(\mb{T}_{t})(\mb{x}_{t}).$$
Each such patch $\md{S}_{\mb{n}, \mb{p}}(\mb{T})$ is a space of piecewise polynomials with global continuity of order $\min \mb{p} - 1$ in $\RR^{d}$.

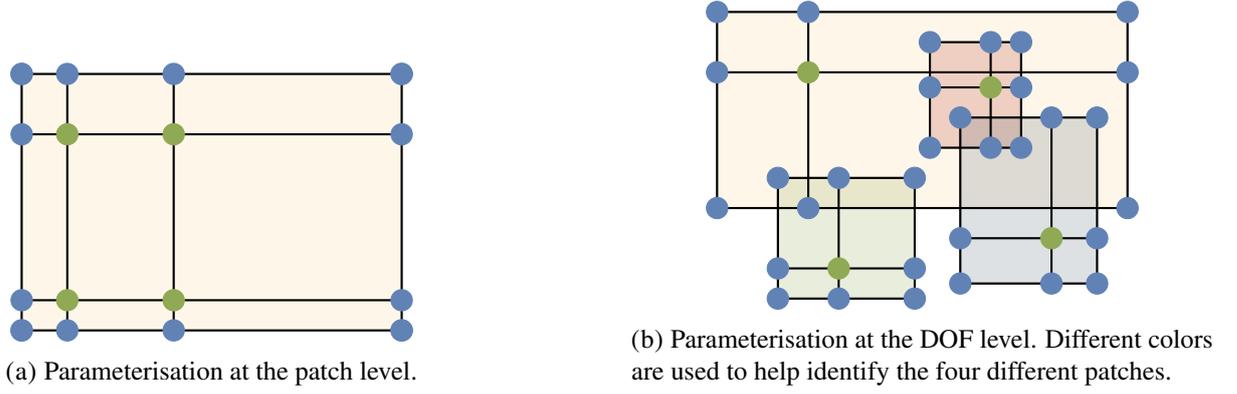
\begin{figure}
    \begin{subfigure}[b]{0.45\linewidth}
        \centering
        \begin{tikzpicture}[scale=2]
            \def \xa {-1}
            \def \xb {-0.7}
            \def \xc {0}
            \def \xd {1.5}

            \def \ya {-1}
            \def \yb {-0.8}
            \def \yc {0.3}
            \def \yd {0.7}

            \draw[fill=yellow!70, fill opacity=0.2] (\xa, \ya) rectangle (\xd, \yd);

            \foreach \x in {\xa, \xb, \xc, \xd}{
                    \draw[thick, black] (\x, \ya) -- (\x, \yd);
                }

            \foreach \y in {\ya, \yb, \yc, \yd}{
                    \draw[thick, black] (\xa, \y) -- (\xd, \y);
                }

            \foreach \x [count=\col] in {\xa, \xd}{
                    \foreach \y [count=\row] in {\ya, \yb, \yc, \yd}{
                            \filldraw[lightblue] (\x, \y) circle (2pt);
                        }
                }

            \foreach \x [count=\col] in {\xa, \xb, \xc, \xd}{
                    \foreach \y [count=\row] in {\ya, \yd}{
                            \filldraw[lightblue] (\x, \y) circle (2pt);
                        }
                }

            \foreach \x [count=\col] in {\xb, \xc} {
                    \foreach \y [count=\row] in {\yb, \yc} {
                            \filldraw[green] (\x, \y) circle (2pt);
                        }
                }
        \end{tikzpicture}
        \caption{Parameterisation at the patch level.}
        \label{fig:parameterisation_2D_axis}
    \end{subfigure}%
    \hfill%
    \begin{subfigure}[b]{0.45\linewidth}
        \centering
        \begin{tikzpicture}[scale=2]
            \draw[fill=yellow!70, fill opacity=0.2] (-0.8, 0.2) rectangle (1.9, 1.5);
            \draw[fill=green, fill opacity=0.2] (-0.4, -0.4) rectangle (0.5, 0.4);
            \draw[fill=red, fill opacity=0.2] (0.6, 0.6) rectangle (1.2, 1.3);
            \draw[fill=darkblue!70, fill opacity=0.2] (0.8, -0.3) rectangle (1.7, 0.8);

            \hatfun{-0.8}{-0.2}{1.9}{0.2}{1.1}{1.5}
            \hatfun{-0.4}{0}{0.5}{-0.4}{-0.2}{0.4}
            \hatfun{0.6}{1}{1.2}{0.6}{1}{1.3}
            \hatfun{0.8}{1.4}{1.7}{-0.3}{0}{0.8}
        \end{tikzpicture}
        \caption{Parameterisation at the \ac{dof} level. Different colors are used to help identify the four different patches.}
        \label{fig:parameterisation_2D_dof}
    \end{subfigure}
    \caption{Two parameterisations of free-knot spaces containing $4$ \acp{dof} in dimension $2$. Green dots correspond to the points directly controlled by the \acp{dof}, while blue points indicate the boundary of the supports of the corresponding basis functions.}
\end{figure}

In particular, the space $\md{S}_{\mb{n}, \mb{1}}(\mb{T})$ contains standard first-order Lagrange finite element spaces defined on axis-aligned, non-uniform grids over $d$-dimensional cubes. \fig{parameterisation_2D_axis} shows an example in dimension two when $\mb{n} = (2, 2)$.

\subsection{Sums of tensor-product patches}
\label{subsect:stp-patches}

While tensor-product spline spaces $\md{S}_{\mb{n}, \mb{p}}$ are convenient to work with numerically and already emulate $r$-adaptive tensor-product free-knot B-splines, they are inherently rigid from a mesh adaptation perspective: modifying a single knot along one axis influences a large number of basis functions. To gain greater local control and flexibility, we now consider function spaces formed by sums of tensor-product patches.

This construction resembles non-truncated hierarchical B-spline methods that build adaptive spaces by stacking tensor-product patches across levels \cite{giannelli2012thb}. Our approach can be seen as a flexible counterpart, where the patches are free to move.

Let $r \geq 1$ denote a number of tensor-product patches. For each patch $s \in \rg{1}{r}$, let $\mb{P}_{s} \in \NN^{d}$ denote the degrees of patch number $s$ along each axis and $\mb{N}_{s} \in \NN^{d}$ denote the number of knots for patch $s$ along each axis. Arranging $(\mb{N}_{1}, \ldots, \mb{N}_{r})$ into $\mb{N} \in \NN^{r \times d}$ and similarly for $\mb{P}$, we define the corresponding sets of knot vectors and \acp{dof} as
$$\md{K}_{\mb{N}} \doteq \prod_{s = 1}^{r} \md{K}_{\mb{N}_{s}}, \qquad \md{W}_{\mb{N}, \mb{P}} \doteq \prod_{s = 1}^{r} \md{W}_{\mb{N}_{s}, \mb{P}_{s}}.$$
Given $\mc{T} \in \md{K}_{\mb{N}}$, we define the corresponding space of B-splines as a sum of tensor-product patches
$$\md{S}_{\mb{N}, \mb{P}}(\mc{T}) \doteq \bigoplus_{s = 1}^{r} \md{S}_{\mb{N}_{s}, \mb{P}_{s}}(\mc{T}_{s}).$$
This construction enables local adaptivity by allowing each patch to refine its knot vector independently, without affecting the continuity or structure of adjacent patches. As a result, resolution can be concentrated in regions where the solution exhibits sharp features or high variation, while coarser mesh sizes suffice elsewhere.

Finally, we consider the union of these sums of tensor-product patches over all possible knot vectors. Formally, for given $r \geq 1$ and degree and knot count arrays $\mb{P}, \mb{N} \in \NN^{r \times d}$, we define the approximation space
$$\md{S}_{\mb{N}, \mb{P}} \doteq \left\{\md{S}_{\mb{N}, \mb{P}}(\mc{T}), \mc{T} \in \md{K}_{\mb{N}}\right\}.$$
This space encompasses all spline functions arising from varying knot configurations within the prescribed patch structure. More concretely, $\md{S}_{\mb{N}, \mb{P}}$ is the image of the realisation map $\mc{R}_{\mb{N}, \mb{P}}: \md{W}_{\mb{N}, \mb{P}} \times \md{K}_{\mb{N}} \to (\RR^{d} \to \RR)$ defined by
$$\mc{R}_{\mb{N}, \mb{P}}(\mc{W}, \mc{T}) = \sum_{s = 1}^{r} \sum_{\mb{i} = \mb{1}}^{\mb{N}_{s} - (\mb{P}_{s} + 1)} \mc{W}_{s, \mb{i}} B_{\mb{i}, \mb{N}_{s}}(\mc{T}_{s}).$$
We refer to $\mc{W} \in \md{W}_{\mb{N}, \mb{P}}$ as the linear parameters (degrees of freedom) and $\mc{T} \in \md{K}_{\mb{N}}$ as nonlinear (mesh) parameters. The dimensions of $\md{W}_{\mb{N}, \mb{P}}$ and $\md{K}_{\mb{N}}$ are
$$\dim \md{W}_{\mb{N}, \mb{P}} = \sum_{r = 1}^{r} \prod_{t = 1}^{d} (\mb{N}_{r, t} - (\mb{P}_{r, t} + 1)), \qquad \dim \md{K}_{\mb{N}} = \sum_{r = 1}^{r} \sum_{t = 1}^{d} \mb{N}_{r, t}.$$
The resulting space $\md{S}_{\mb{N}, \mb{P}}$ has global continuity of order $\min \mb{P} - 1$ in $\RR^{d}$, so it is contained in the Sobolev space $H^{k}(\Omega)$ for all $k \in \rg{0}{\min \mb{P}}$. In particular, $\md{S}_{\mb{N}, \mb{P}}$ is $H^{1}$-conforming whenever $\min \mb{P} \geq 1$.

For a fixed number of basis functions, most adaptivity can be achieved when the corresponding basis functions are as decoupled as possible. In the extreme case, this corresponds to parameterising each basis function independently by using patches that contain exactly one basis function; that is, when $\mb{n} = \mb{p} + \mb{2}$. An example of such a space is represented on \fig{parameterisation_2D_dof} for linear splines. This parameterisation at the \ac{dof} level offers significant flexibility, as the supports of the basis functions become fully independent. It is also clear that the space parameterised at the \ac{dof} level strictly contains the space parameterised at the patch level.

This level of independence is reminiscent of radial basis function methods \cite{buhmann2000radial, fornberg2015primer}, where each basis function is centred and shaped individually. Such methods also offer highly localised and flexible approximation spaces, though they typically lack the compact support and tensor-product structure present here.

From a numerical standpoint, the \ac{dof}-level parameterisation is unwieldy because it lacks any structured overlap among basis functions. When adapting the mesh to minimise an energy functional, the support of every basis function may change at each iteration. This makes evaluating bilinear forms very expensive, since one must account for the intersections between the supports of all pairs of basis functions. Similar issues arise in radial basis function methods, which likewise require evaluating dense matrices unless special localisation, truncation or sparsification techniques are used \cite{schaback2007practical}. Moreover, as the mesh parameters vary, the pattern of which basis functions overlap changes unpredictably. In other words, the sparsity pattern of the matrices representing these bilinear forms may change at every iteration during mesh optimisation.

In contrast, the connectivity pattern within a single patch is independent of the knot vector. This provides a structured overlap that can be exploited for efficient numerical computations. For instance, in one dimension, each basis function $B_{i, p}$ only overlaps with those $B_{j, p}$ where $j \in \{i-p, i+p\}$. As a result, the matrix representation of a one-dimensional bilinear form has a banded structure with bandwidth $2 p + 1$, enabling efficient assembly and storage.

We explore several choices of space architecture ($r$, $\mb{N}$) in our experiments, ranging from a single patch with many basis functions ($r = 1$, $\mb{N} = (\mb{n},)$, $\mb{n} \gg 1$, most coupled parameterisation), to many patches each containing a single basis function ($r \gg 1$, $\mb{N} = (\mb{1}, \ldots, \mb{1})$, most decoupled parameterisation).

\section{Numerical analysis of tensor-product free-knot B-spline spaces}
\label{sect:analysis}
In this section, we specialise the abstract framework developed in our companion work \cite{companion} to the concrete setting of overlapping tensor-product free-knot B-spline patches. References to the algorithm, assumptions, and lemmas from our companion work are given in \textcolor{purple}{red}, following their original numbering.

We fix an approximation space $V = \md{S}_{\mb{N}, \mb{P}}$ for some number of patches $r \geq 1$, with knot numbers and degrees $\mb{N}, \mb{P} \in \NN^{r \times d}$. Following the notation of our companion work, we write $\md{W} = \md{W}_{\mb{N}, \mb{P}}$ and $\md{X} = \md{K}_{\mb{N}, \mb{P}}$. The basis functions $B_{\mb{i}, \mb{N}_{s}}$ are denoted by $(\mb{\phi}_{i})$ for a suitable enumeration. Finally, the realisation map is simply written $\mc{R}$ instead of $\mc{R}_{\mb{N}, \mb{P}}$.

Our goal is to minimise an energy functional $\mc{J}: U \to \RR$ in $V$, assuming $V \subset U$, of the form
$$\mc{J}(u) \doteq \frac{1}{2} a(u, u) - \ell(u).$$
Here $a: U \times U \to \RR$ is a symmetric, coercive and continuous bilinear form, $\ell: U \to \RR$ is a continuous linear form and $U$ is a suitable Hilbert space. Let $\mc{K} \doteq \mc{J} \circ \mc{R}: \md{W} \times \md{X} \to \RR$ denote the discrete energy functional.

In this section, we begin by formalising the tensor-product structure assumed for the approximation space $U$, as well as for the associated linear and bilinear forms. This is not a restrictive assumption, as any multilinear form can be approximated arbitrarily well by a finite sum of tensor-product terms, with the number of terms controlling the approximation rank \cite{hackbusch2012tensor}. This separability simplifies the regularity analysis of the discrete energy by reducing it to the one-dimensional setting.

Next, we study the B-spline basis through the B-spline functor $B_p: \md{K}_{p+2} \to (\RR \to \RR)$, deriving explicit bounds and continuity properties of B-splines and their spatial, parametric, and mixed derivatives. Under a uniform minimum mesh size assumption, we prove that B-spline functions are uniformly bounded and Hölder continuous with respect to the knot vectors, achieving Lipschitz continuity when the spline degree is sufficiently high. These regularity results ensure that the discrete energy functional satisfies the required smoothness and continuity assumptions (\assumenergyreg{}).

We then invoke classical results on the invertibility and conditioning of discretisation matrices to show that a uniform lower bound on the global mesh size guarantees coercivity of the quadratic forms, establishing the uniform positive-definiteness condition (\assumspd{}).

To enforce the minimum mesh size constraint, we characterise the nonlinear parameter space as a compact and convex set (\assumspace{}). Maintaining this constraint throughout optimisation reduces to projecting onto this set, which can be formulated as a quadratic program with quadratic inequality constraints.

Finally, we briefly discuss the approximation properties of tensor-product free-knot B-splines and their connection to hierarchical spline constructions, highlighting the expressiveness and adaptivity of the proposed discretisation space.

\subsection{Tensor-product structure}
We recall the definition of tensor-product domain, functional space, linear and bilinear forms.

\begin{definition}[Tensor-product domain]
    We say that $\Omega \subset \RR^{d}$ is a \emph{tensor-product domain} if there exist bounded intervals $(\Omega_{t})_{1 \leq t \leq d}$ such that $\Omega = \prod_{t = 1}^{d} \Omega_{t}$.
\end{definition}

\begin{definition}[Tensor-product functional space]
    Assuming $\Omega$ is a tensor-product domain, we say that $U$ is a \emph{tensor-product functional space} if there exist functional spaces $(U_{t})_{1 \leq t \leq d}$, with $U_{t}$ a set of functions from $\Omega_{t}$ to $\RR$, such that $\bigotimes_{t = 1}^{d} U_{t} \subset U$.
\end{definition}

\begin{definition}[Separable linear form]
    Assuming $U$ is a tensor-product functional space, we say that a linear form $\ell: U \to \RR$ is \emph{separable} if there exist $r \geq 1$ and linear forms $(\ell_{s, t})_{s \in \rg{1}{r}, t \in \rg{1}{d}}$ with $\ell_{s, t}: U_{t} \to \RR$, such that $\ell = \sum_{s = 1}^{r} \bigotimes_{t = 1}^{d} \ell_{s, t}$; that is for all for all $u = \bigotimes_{t = 1}^{d} u_{t}$ with $u_{t} \in U_{t}$, $\ell$ decomposes as
    $$\ell(u) = \sum_{s = 1}^{r} \prod_{t = 1}^{d} \ell_{s, t}(u_{t}).$$
\end{definition}

\begin{definition}[Separable bilinear form]
    Assuming $U$ is a tensor-product functional space, we say that a bilinear form $a: U \times U \to \RR$ is \emph{separable} if there exist $r \geq 1$ and bilinear forms $(a_{s, t})_{s \in \rg{1}{r}, t \in \rg{1}{d}}$ with $a_{s, t}: U_{t} \times U_{t} \to \RR$, such that $a = \sum_{s = 1}^{r} \bigotimes_{t = 1}^{d} a_{s, t}$; that is, for all $u = \bigotimes_{t = 1}^{d} u_{t}$ and $v = \bigotimes_{t = 1}^{d} v_{t}$ with $u_{t}, v_{t} \in U_{t}$, $a$ decomposes as
    $$a(u, v) = \sum_{s = 1}^{r} \prod_{t = 1}^{d} a_{s, t}(u_{t}, v_{t}).$$
\end{definition}

In this work, we assume that the linear and bilinear forms involved in the energy functional are separable. This representation is not a restrictive assumption, as any sufficiently regular form can be approximated to arbitrary accuracy by a finite-rank tensor-product decomposition, where the rank corresponds to the number of tensor-product terms \cite{hackbusch2012tensor}.

Now consider degrees $\mb{p} \in \NN^{d}$ and the corresponding free-knot basis function $B_{\mb{p}}$. By the product rule for differentiation, it is clear that the gradient and Hessian of $\ell \circ B_{\mb{p}}$ with respect to the nonlinear parameters are separable, with terms involving $\ell_{s, t} \circ B_{\mb{p}_{t}}$, $\nabla (\ell_{s, t} \circ B_{\mb{p}_{t}})$ and $\nabla^{2} (\ell_{s, t} \circ B_{\mb{p}_{t}})$.

Similarly, for other degrees $\mb{q} \in \NN^{d}$ and the corresponding free-knot basis function $B_{\mb{q}}$, it is clear that the gradient and Hessian of $a \circ (B_{\mb{p}}, B_{\mb{q}})$ with respect to the nonlinear parameters is separable, with terms involving $a_{s, t} \circ (B_{\mb{p}_{t}}, B_{\mb{q}_{t}})$, $\nabla (a_{s, t} \circ (B_{\mb{p}_{t}}, B_{\mb{q}_{t}}))$ and $\nabla^{2} (a_{s, t} \circ (B_{\mb{p}_{t}}, B_{\mb{q}_{t}}))$.

Altogether, this shows that the differentiability of the discrete energy, as well as the boundedness and continuity of the energy and its gradient reduce to analysis of the one-dimensional components $\ell_{s, t} \circ B_{\mb{p}_{t}}$ and $a_{s, t} \circ (B_{\mb{p}_{t}}, B_{\mb{q}_{t}})$.

\subsection{The B-spline functor and its derivatives}

The analysis of a nonlinear discretisation using free-knot B-splines is more intricate than that of standard linear discretisations because the B-spline basis functions are defined in the physical space. Still, the definition of B-splines with divided differences makes it clear that $B_{i, p}(\mb{\tau})$ only depends on the $p+2$ knots $(\mb{\tau}_{i}, \ldots, \mb{\tau}_{i+p+1})$, for all $\mb{\tau} \in \md{K}_{n}$, $p \geq 0$ and $i \in \rg{1}{n - (p + 1)}$. As suggested by the notations in \sect{patches}, we see $B_{i, p}$ as a map from $\md{K}_{n}$ to functions from $\RR$ to $\RR$.

For convenience, given $p \geq 0$, we introduce the \emph{B-spline functor} $B_{p}: \md{K}_{p+2} \to (\RR \to \RR)$ defined by
$$B_{p}(\mb{\tau})(x) \doteq (\mb{\tau}_{p+2} - \mb{\tau}_{1}) [\mb{\tau}_{1}, \ldots, \mb{\tau}_{p+2}] (\bullet - x)_{+}^{p},$$
for all $\mb{\tau} \in \md{K}_{p+2}$, using the notations of \sect{patches}. This way, all B-splines of degree $p$ can be written in terms of $B_{p}$. More precisely,
$$B_{i, p}(\mb{\tau}) = B_{p}(\mb{\tau}_{i}, \ldots, \mb{\tau}_{i+p+1}).$$
This reduces the study of all B-splines of a given degree to a single B-spline functor. The linear, quadratic and cubic B-spline functors are depicted on \fig{splineFunctors}.

For clarity, we use the letters $\mb{\tau}, \mb{\sigma} \in \md{K}_{p+2}$ when studying B-spline functors, and $\mb{\xi}, \mb{\eta} \in \md{X}$ when working with the full nonlinear parameter space.

\begin{figure}
    \centering
    \begin{subfigure}[t]{0.31\linewidth}
        \centering
        \begin{tikzpicture}[scale=1.8]
            \draw[black, thick, -stealth] (-0.2, 0) -- (2.2, 0);
            \draw[black, thick, -stealth] (-0.1, -0.1) -- (-0.1, 1.2);
            \node at (-0.2, 0) [anchor=east] {$0$};
            \node at (-0.2, 1) [anchor=east] {$1$};

            \linearSpline{0.0}{1.6}{2.0}
            \linearSplineVert{0.0}{1.6}{2.0}

            \foreach \x in {0.0, 1.6, 2.0}
            \filldraw[black] (\x, 0) circle (1pt);

            \foreach \x [count=\xk] in {0.0, 1.6, 2.0}
            \node at (\x, -0.35) [anchor=south] {$\mb{\tau}_{\xk}$};
        \end{tikzpicture}
        \caption{Linear, $B_{1}(\mb{\tau})$.}
        \label{fig:linearSplineFunctor}
    \end{subfigure}%
    \hfill%
    \begin{subfigure}[t]{0.31\linewidth}
        \centering
        \begin{tikzpicture}[scale=1.8]
            \draw[black, thick, -stealth] (-0.2, 0) -- (2.2, 0);
            \draw[black, thick, -stealth] (-0.1, -0.1) -- (-0.1, 1.2);
            \node at (-0.2, 0) [anchor=east] {$0$};
            \node at (-0.2, 1) [anchor=east] {$1$};

            \quadraticSpline{0.0}{0.8}{1.5}{2.0}
            \quadraticSplineVert{0.0}{0.8}{1.5}{2.0}

            \foreach \x in {0.0, 0.8, 1.5, 2.0}
            \filldraw[black] (\x, 0) circle (1pt);

            \foreach \x [count=\xk] in {0.0, 0.8, 1.5, 2.0}
            \node at (\x, -0.35) [anchor=south] {$\mb{\tau}_{\xk}$};
        \end{tikzpicture}
        \caption{Quadratic, $B_{2}(\mb{\tau})$.}
        \label{fig:quadraticSplineFunctor}
    \end{subfigure}%
    \hfill%
    \begin{subfigure}[t]{0.31\linewidth}
        \centering
        \begin{tikzpicture}[scale=1.8]
            \draw[black, thick, -stealth] (-0.2, 0) -- (2.2, 0);
            \draw[black, thick, -stealth] (-0.1, -0.1) -- (-0.1, 1.2);
            \node at (-0.2, 0) [anchor=east] {$0$};
            \node at (-0.2, 1) [anchor=east] {$1$};

            \cubicSpline{0.0}{1.0}{1.3}{1.6}{2.0}
            \cubicSplineVert{0.0}{1.0}{1.3}{1.6}{2.0}

            \foreach \x in {0.0, 1.0, 1.3, 1.6, 2.0}
            \filldraw[black] (\x, 0) circle (1pt);

            \foreach \x [count=\xk] in {0.0, 1.0, 1.3, 1.6, 2.0}
            \node at (\x, -0.35) [anchor=south] {$\mb{\tau}_{\xk}$};
        \end{tikzpicture}
        \caption{Cubic, $B_{3}(\mb{\tau})$.}
        \label{fig:cubicSplineFunctor}
    \end{subfigure}%
    \caption{Some B-spline functors.}
    \label{fig:splineFunctors}
\end{figure}
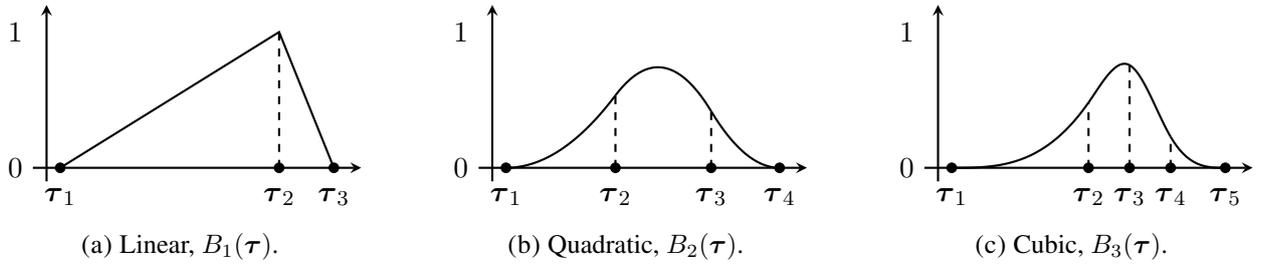

We introduce some notations to work with knot vectors. Given $\mb{\tau} \in \md{K}_{p}$, we write $\mb{\tau} \remBeg$ and $\mb{\tau} \remEnd$ the elements in $\md{K}_{p-1}$ obtained by \emph{removing the first or last element} from $\mb{\tau}$, respectively; that is,
$$\mb{\tau} \remBeg \doteq (\mb{\tau}_{2}, \ldots, \mb{\tau}_{p}), \qquad \mb{\tau} \remEnd \doteq (\mb{\tau}_{1}, \ldots, \mb{\tau}_{p-1}).$$
Given $x \in \cc{\mb{\tau}_{1}}{\mb{\tau}_{p}}$, we also write $\mb{\tau} \oplus x$ the element in $\md{K}_{p+1}$ obtained by \emph{inserting} $x$ into $\mb{\tau}$ while \emph{keeping the order},
$$\mb{\tau} \oplus x \doteq (\mb{\tau}_{1}, \ldots, \mb{\tau}_{k}, x, \mb{\tau}_{k+1}, \ldots, \mb{\tau}_{p}),$$
where $\mb{\tau}_{k} \leq x \leq \mb{\tau}_{k+1}$. We also define the \emph{width} of $\mb{\tau}$ as $\abs{\mb{\tau}} \doteq \mb{\tau}_{p} - \mb{\tau}_{1}$.

Observe that the B-spline functor can be written $B_{p}(\mb{\tau}) = \abs{\mb{\tau}} N_{p}(\mb{\tau})$, where $N_{p}: \md{K}_{p+2} \to (\RR \to \RR)$ is the \emph{normalised B-spline functor} defined by
$$N_{p}(\mb{\tau})(x) \doteq [\mb{\tau}_{1}, \ldots, \mb{\tau}_{p+2}] (\bullet - x)_{+}^{p}.$$
This alternative normalisation is useful to work with B-spline derivatives (spatial or parametric).

With these notations, we can state the differentiability of the B-spline functor and give expressions for its spatial and parametric derivatives.

\begin{lemma}
    \label{lem:BsplineDerivatives}
    Let $p \geq 0$ and $\mb{\tau} \in \md{K}_{p+2}$.

    \begin{itemize}
        \item $B_{p}(\mb{\tau})$ is differentiable in $H^{p-1}(\RR)$, with
              $$\partial_{x} B_{0}(\mb{\tau}) = \delta_{\mb{\tau}_{1}} - \delta_{\mb{\tau}_{2}},$$
              where $\delta_{z} \in H^{-1}(\RR)$ is the Dirac distribution at $z \in \RR$, and for all $p \geq 1$,
              $$\partial_{x} B_{p}(\mb{\tau}) = -p \left(N_{p-1}(\mb{\tau} \remBeg) - N_{p-1}(\mb{\tau} \remEnd)\right).$$
        \item For all $i \in \rg{1}{p+2}$, the map $B_{p}: \md{K}_{p+2} \to H^{p-1}(\RR)$ is differentiable with respect to its $i$-th input in the topology induced by the $H^{p-1}(\RR)$ norm, and
              $$\partial_{i} B_{p}(\mb{\tau}) = N_{p}([\mb{\tau} \oplus \mb{\tau}_{i}] \remBeg) \delta_{i \neq 1} - N_{p}([\mb{\tau} \oplus \mb{\tau}_{i}] \remEnd) \delta_{i \neq p + 2}.$$
    \end{itemize}
\end{lemma}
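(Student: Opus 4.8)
The whole statement is about the functor $B_p(\mb{\tau})(x) = (\mb{\tau}_{p+2} - \mb{\tau}_1)[\mb{\tau}_1,\dots,\mb{\tau}_{p+2}](\bullet - x)_+^p$, so the strategy is to exploit the divided-difference representation throughout, together with two classical identities: (i) the recursion for divided differences $[\mb{\tau}_1,\dots,\mb{\tau}_{m}]f = \frac{[\mb{\tau}_2,\dots,\mb{\tau}_m]f - [\mb{\tau}_1,\dots,\mb{\tau}_{m-1}]f}{\mb{\tau}_m - \mb{\tau}_1}$, which in the present notation reads $|\mb{\tau}|[\mb{\tau}]f = [\mb{\tau}\remBeg]f - [\mb{\tau}\remEnd]f$, hence $B_p(\mb{\tau}) = N_p(\mb{\tau}\remBeg)/\text{(nothing)}$... more precisely $N_p$ behaves additively under truncation; and (ii) the Leibniz/Hermite–Genocchi sensitivity of a divided difference to one of its nodes. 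I expect the proof to be organised as two independent computations (spatial derivative, then parametric derivative), each reduced to the degree-$p$ case after checking $p=0$ by hand.

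\emph{Spatial derivative.} I would first dispatch $p=0$: $B_0(\mb{\tau})$ is the indicator of $\co{\mb{\tau}_1}{\mb{\tau}_2}$, whose distributional derivative is $\delta_{\mb{\tau}_1}-\delta_{\mb{\tau}_2}$; this lives in $H^{-1}(\RR)$, consistent with $B_0\in H^0$ being "differentiable in $H^{-1}$". For $p\ge 1$, differentiate under the divided-difference bracket: since $\partial_x (y-x)_+^p = -p(y-x)_+^{p-1}$ pointwise (and in $L^2$ for $p\ge 1$), linearity of $[\mb{\tau}_1,\dots,\mb{\tau}_{p+2}]$ in its argument gives $\partial_x B_p(\mb{\tau}) = -p\,|\mb{\tau}|\,[\mb{\tau}_1,\dots,\mb{\tau}_{p+2}](\bullet-x)_+^{p-1}$. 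Now the integrand is a degree-$(p-1)$ truncated power evaluated on $p+2$ nodes, i.e. a $(p+1)$-st order divided difference of something that only needs $p+1$ nodes; applying the divided-difference recursion (identity (i)) once collapses this to the difference of two $p$-th order divided differences on the truncated node sets $\mb{\tau}\remBeg$ and $\mb{\tau}\remEnd$, each of which is exactly $N_{p-1}$ of the truncated knot vector (the $|\mb{\tau}|$ factor cancels against the $1/|\mb{\tau}|$ produced by the recursion). This yields $\partial_x B_p(\mb{\tau}) = -p\,(N_{p-1}(\mb{\tau}\remBeg) - N_{p-1}(\mb{\tau}\remEnd))$. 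Regularity: by induction $N_{p-1}$ of a genuine knot vector lies in $H^{p-2}$, so the right-hand side is in $H^{p-2}$... I must be careful here — the claim is $B_p\in H^{p-1}$, i.e. $\partial_x B_p \in H^{p-2}$, so the bookkeeping checks out, using properties (1)--(4) from Subsection~\ref{subsect:spline-properties} for the base of the induction.

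\emph{Parametric derivative.} Fix $i\in\rg{1}{p+2}$ and differentiate $B_p(\mb{\tau}) = |\mb{\tau}| N_p(\mb{\tau})$ with respect to $\mb{\tau}_i$, where $N_p(\mb{\tau})(x) = [\mb{\tau}_1,\dots,\mb{\tau}_{p+2}](\bullet-x)_+^p$. The key fact is the derivative of a divided difference with respect to one node: $\partial_{\mb{\tau}_i}[\mb{\tau}_1,\dots,\mb{\tau}_{p+2}]f = [\mb{\tau}_1,\dots,\mb{\tau}_i,\mb{\tau}_i,\dots,\mb{\tau}_{p+2}]f$, the confluent divided difference with the node $\mb{\tau}_i$ repeated (equivalently, insert a copy of $\mb{\tau}_i$ — this is precisely what $\mb{\tau}\oplus\mb{\tau}_i$ denotes). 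This is a $(p+2)$-nd order divided difference of a degree-$p$ function; one application of the recursion (identity (i)) on the node list $\mb{\tau}\oplus\mb{\tau}_i$ splits it as $[\,(\mb{\tau}\oplus\mb{\tau}_i)\remBeg\,]f - [\,(\mb{\tau}\oplus\mb{\tau}_i)\remEnd\,]f$ divided by the width of $\mb{\tau}\oplus\mb{\tau}_i$, which equals $|\mb{\tau}|$; each surviving bracket is a $(p+1)$-st order divided difference of $(\bullet-x)_+^p$ on $p+2$ nodes, i.e. $N_p$ of that node vector. Multiplying back by $|\mb{\tau}|$ cancels the width and leaves $\partial_i B_p(\mb{\tau}) = N_p((\mb{\tau}\oplus\mb{\tau}_i)\remBeg) - N_p((\mb{\tau}\oplus\mb{\tau}_i)\remEnd)$ — except that when $i=1$, removing the first element of $\mb{\tau}\oplus\mb{\tau}_1$ deletes the inserted copy and the term is spurious (it would be an $N_p$ on a non-admissible vector / must be interpreted as zero), and symmetrically when $i=p+2$ the "remove last" term drops; the Kronecker factors $\delta_{i\neq 1}$ and $\delta_{i\neq p+2}$ encode exactly this. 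I also need to address the $|\mb{\tau}|$ prefactor's own dependence on $\mb{\tau}_i$: it contributes only when $i\in\{1,p+2\}$, and in those two boundary cases a short direct check (or a telescoping argument) shows the extra term combines with the truncated-$N_p$ term to reproduce the stated formula — this is the one place where some honest computation is unavoidable.

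\emph{Main obstacle.} The delicate point is not the formal manipulation but the functional-analytic justification: asserting that $B_p:\md{K}_{p+2}\to H^{p-1}(\RR)$ is differentiable \emph{in the $H^{p-1}$ norm}, and that $\partial_x$ and $\partial_i$ may be interchanged with the divided-difference bracket and with the $L^2$/Sobolev limits. Concretely, I would establish a difference-quotient estimate: bound $\|N_p(\mb{\tau}') - N_p(\mb{\tau})\|_{H^{p-1}}$ in terms of $\|\mb{\tau}'-\mb{\tau}\|$ uniformly (this is where a minimum-mesh-size hypothesis would normally enter, but for the bare differentiability statement at a fixed $\mb{\tau}$ with strictly increasing knots one only needs local control), and show the remainder in the first-order Taylor expansion is $o(\|\mb{\tau}'-\mb{\tau}\|)$ in $H^{p-1}$. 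The boundary cases $i=1,p+2$ — where inserting $\mb{\tau}_i$ creates a repeated knot and the support of $B_p$ itself moves — require the Hölder-type estimates for B-splines with respect to knots that the paper develops next, so I would either invoke those or give an ad hoc estimate using the explicit piecewise-polynomial form on the first/last knot interval.
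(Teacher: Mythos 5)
Your proposal follows the same backbone as the paper's proof: the divided-difference representation, the recursion $[\mb{\tau}]f = ([\mb{\tau}\remBeg]f - [\mb{\tau}\remEnd]f)/\abs{\mb{\tau}}$, differentiation of $(\bullet-x)_+^p$ under the bracket for the spatial derivative, and node-confluence ($\mb{\tau}\oplus\mb{\tau}_i$) for the parametric derivative. The differences are in execution, and two of them matter. First, for $\partial_i B_p$ the paper does not use the product rule on $\abs{\mb{\tau}}N_p(\mb{\tau})$: it first applies the recursion to cancel $\abs{\mb{\tau}}$, writing $B_p(\mb{\tau})(x) = [\mb{\tau}\remBeg]f_x - [\mb{\tau}\remEnd]f_x$, and only then forms the difference quotient in $\mb{\tau}_i$. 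With that ordering the factors $\delta_{i\neq1}$ and $\delta_{i\neq p+2}$ arise because the term $[\mb{\tau}\remBeg]f_x$ genuinely does not depend on $\mb{\tau}_1$ (and symmetrically for $\remEnd$ and $\mb{\tau}_{p+2}$), so its contribution is exactly zero. Your stated mechanism --- that for $i=1$ the term $N_p((\mb{\tau}\oplus\mb{\tau}_1)\remBeg)$ is ``spurious / on a non-admissible vector'' --- is not correct: $(\mb{\tau}\oplus\mb{\tau}_1)\remBeg = \mb{\tau}$ is perfectly admissible and $N_p(\mb{\tau})\neq 0$; in your product-rule route this term must instead \emph{cancel} against the $(\partial_1\abs{\mb{\tau}})N_p(\mb{\tau}) = -N_p(\mb{\tau})$ contribution. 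You do hedge by promising a ``short direct check'' in the boundary cases, and that check does close the gap, but as written the mechanism is misidentified. Second, for the upgrade from pointwise to $H^{p-1}$-norm differentiability the paper uses a short dominated-convergence argument (the difference-quotient remainders lie in $H^{p-1}$, have a common compact support, and tend to zero pointwise), which is self-contained; your plan to prove a quantitative $o(\norm{\mb{\tau}'-\mb{\tau}})$ remainder bound, possibly by invoking the Hölder estimates proved \emph{after} this lemma, is both heavier and risks circularity, since those estimates are stated for $B_p$ and $\partial_i B_p$ themselves rather than for the Taylor remainder. Finally, note the paper treats $p=1$ separately for the spatial derivative (via the explicit piecewise-linear form and a weak-derivative computation) precisely because $\partial_x(y-x)_+^1$ is only a Heaviside step, whereas your blanket ``differentiate under the bracket for all $p\geq1$'' glosses over that case.
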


\begin{proof}
    See \app{BsplineDerivatives}.
\end{proof}

It is important to distinguish between the two notions of differentiability involved in \lem{BsplineDerivatives}. Spatial differentiability refers to the regularity of the B-spline $B_p(\mb{\tau})$ as a function from $\RR$ to $\RR$; specifically, $\partial_x B_p(\mb{\tau})$ belongs to $H^{p-1}(\RR)$ for all $\mb{\tau} \in \md{K}_{p+2}$. In contrast, parametric differentiability concerns the smoothness of $B_p$ as a map from $\md{K}_{p+2}$ to $H^{p-1}(\RR)$, which encodes how the shape of the spline depends on its knot vector.

From now on, we view $\partial_{x} B_{p}$, $\partial_{i} B_{p}$ and compositions thereof as maps from $\md{K}_{p+2}$ to appropriate Hilbert space; that is,
$$\partial_{i_{1}} \cdots \partial_{i_{\alpha}} \partial_{x}^{\beta} B_{p}: \left|\begin{array}{rcl}
        \md{K}_{p+2} & \longrightarrow & H^{p-\alpha-\beta}(\RR)                                                                             \\
        \mb{\tau}    & \longmapsto     & (x \mapsto \partial_{i_{1}} \cdots \partial_{i_{\alpha}} \partial_{x}^{\beta} B_{p}(\mb{\tau})(x)),
    \end{array}\right.$$
for all $\alpha \geq 0$, $\beta \geq 0$ and $i_{1}, \ldots, i_{\alpha} \in \rg{1}{p+2}$.

\subsection{Differentiability of the discrete energy}

We turn to the study of the differentiability of one-dimensional linear and bilinear forms composed with free-knot B-splines. We highlight the interplay between the regularity of the data, the degree of the B-spline and the differentiability of the discrete energy functional.

A key insight is that the differentiability of the energy does not require the B-spline functor itself to be differentiable in the ambient space $U$. In particular, spatial integration and parametric differentiation generally do not commute, and this subtlety must be treated carefully.

For instance, with linear B-splines, the shape function derivatives with respect to knot positions are piecewise constant: they belong to $L^{2}$ but not to $H^{1}$. As a result, parametric differentiation cannot be exchanged with spatial integration, but the energy is still smooth under the existence of a minimum mesh size. A detailed derivation of this example can be found in \cite{thesis}.

Since the convergence of our \algopt{} requires uniformly continuous gradients, we focus on the continuously differentiable case, rather than the more general setting of merely pointwise differentiable compositions.

\begin{lemma}
    \label{lem:derivativeLinearForm}
    Let $I \subset \RR$, $p \geq 0$, $f \in H^{-(p-1)}(I)$. Then the function $\mb{\tau} \mapsto \int_{I} f B_{p}(\mb{\tau}) \dd{x}$ is differentiable and for all $i \in \rg{1}{p+2}$ and all $\mb{\tau} \in \md{K}_{p+2}$, it holds
    $$\partial_{i} \int_{I} f B_{p}(\mb{\tau}) \dd{x} = \int_{I} f \partial_{i} B_{p}(\mb{\tau}) \dd{x}.$$
\end{lemma}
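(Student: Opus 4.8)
The plan is to recognise $\mb{\tau} \mapsto \int_{I} f B_{p}(\mb{\tau}) \dd{x}$ as the composition of the parametrically differentiable map $B_{p}$ with a \emph{fixed} bounded linear functional, and then invoke the chain rule. The crucial point — and the reason the hypothesis $f \in H^{-(p-1)}(I)$ is exactly what is needed — is that $B_{p}$ need not be differentiable as a map into the ambient space $H^{p}(\RR)$ in which $B_{p}(\mb{\tau})$ lives (for $p \in \{0, 1\}$ it is \emph{not}), whereas by \lem{BsplineDerivatives} it \emph{is} differentiable as a map $\md{K}_{p+2} \to H^{p-1}(\RR)$; and a linear form that is bounded on $H^{p-1}$ is precisely one induced by an element of $H^{-(p-1)}$. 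In particular, no dominated-convergence or genuine differentiation-under-the-integral argument is involved: the statement is a short corollary of \lem{BsplineDerivatives} together with continuity of the functional.

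Concretely I would proceed in three steps. First, for fixed $f \in H^{-(p-1)}(I)$, the integral $\int_{I} f v \dd{x}$ is by definition the action on $v$ of a bounded linear functional $\Lambda_{f}: H^{p-1}(I) \to \RR$, with $\abs{\Lambda_{f}(v)} \leq \norm{f}_{H^{-(p-1)}(I)} \norm{v}_{H^{p-1}(I)}$, and the restriction operator $\rho_{I}: H^{p-1}(\RR) \to H^{p-1}(I)$ is bounded and linear. Second, by \lem{BsplineDerivatives}, for every $i \in \rg{1}{p+2}$ the map $B_{p}: \md{K}_{p+2} \to H^{p-1}(\RR)$ admits the partial derivative $\partial_{i} B_{p}(\mb{\tau}) \in H^{p-1}(\RR)$ given there; post-composing a differentiable map with the bounded linear operator $\rho_{I}$ preserves this differentiability, so $\rho_{I} \circ B_{p}$ has partial derivative $\partial_{i} B_{p}(\mb{\tau})|_{I}$ at every $\mb{\tau}$. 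Third, apply the elementary fact that a bounded linear functional commutes with limits: the $i$-th difference quotient of $\Lambda_{f} \circ \rho_{I} \circ B_{p}$ at $\mb{\tau}$ equals $\Lambda_{f}$ evaluated on the image under $\rho_{I}$ of the corresponding difference quotient of $B_{p}$, which converges in $H^{p-1}(I)$ to $\partial_{i} B_{p}(\mb{\tau})|_{I}$; by continuity of $\Lambda_{f}$, the difference quotient of $\Lambda_{f} \circ \rho_{I} \circ B_{p}$ therefore converges, to $\Lambda_{f}(\partial_{i} B_{p}(\mb{\tau})|_{I})$. Since $\Lambda_{f} \circ \rho_{I} \circ B_{p}$ is precisely $\mb{\tau} \mapsto \int_{I} f B_{p}(\mb{\tau}) \dd{x}$, this yields
$$\partial_{i} \int_{I} f B_{p}(\mb{\tau}) \dd{x} = \Lambda_{f}\!\left(\partial_{i} B_{p}(\mb{\tau})|_{I}\right) = \int_{I} f \, \partial_{i} B_{p}(\mb{\tau}) \dd{x},$$
which is the claim.

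There is no deep obstacle here; the one thing that genuinely requires attention — and would occupy most of a careful write-up — is the functional-analytic bookkeeping in the low-degree cases. For $p \geq 2$, everything above is an ordinary Sobolev duality and the argument is literal. For $p = 1$, $\partial_{i} B_{1}(\mb{\tau}) \in L^{2}$ only, $H^{-(p-1)}(I) = L^{2}(I)$, and the pairing is the $L^{2}$ inner product. For $p = 0$, $\partial_{i} B_{0}(\mb{\tau})$ is a difference of Dirac masses and $H^{-(p-1)}(I) = H^{1}(I)$, so one must fix the convention for negative-order Sobolev spaces on the bounded interval $I$, check that $\rho_{I}$ remains bounded at that order (by duality with extension by zero), and use the one-dimensional embedding $H^{p-1}(I) \hookrightarrow C(\overline{I})$ to make sense of the point evaluations of $f$ that appear in $\int_{I} f \, \partial_{i} B_{0}(\mb{\tau}) \dd{x}$, with the understanding that a knot lying outside $\overline{I}$ contributes nothing. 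Once these conventions are pinned down, the three-step argument applies uniformly in $p$, and I anticipate no further difficulty: the substantive work is entirely contained in \lem{BsplineDerivatives}, whose proof is deferred to the appendix.
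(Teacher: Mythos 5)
Your argument is exactly the paper's: both reduce the claim to the duality bound $\abs{\ell_{f}(v)} \leq \norm{f}_{H^{-(p-1)}(I)} \norm{v}_{H^{p-1}(I)}$ applied to the difference quotient, and then invoke the differentiability of $B_{p}$ in the $H^{p-1}$ topology from \lem{BsplineDerivatives} to conclude. The only difference is that you spell out the restriction-operator and low-degree ($p \in \{0,1\}$) bookkeeping that the paper leaves implicit; the substance is the same.
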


\begin{proof}
    See \app{derivativeLinearForm}.
\end{proof}

\begin{lemma}
    \label{lem:derivativeBilinearForm}
    Let $I \subset \RR$, $k \geq 0$, $p \geq k$, $q \geq k$, $r = \min(p-k-1, q-k)$, $f \in H^{-r}(I)$. If $(p, q) \neq (k, k)$, then for all $\mb{\sigma} \in \md{K}_{q+2}$, the function $\mb{\tau} \mapsto \int_{I} f (\partial_{x}^{k} B_{p}(\mb{\tau})) (\partial_{x}^{k} B_{q}(\mb{\sigma})) \dd{x}$ is differentiable and for all $i \in \{1, \ldots, p+2\}$, $\mb{\tau} \in \md{K}_{p+2}$, it holds
    $$\partial_{i} \int_{I} f (\partial_{x}^{k} B_{p}(\mb{\tau})) (\partial_{x}^{k} B_{q}(\mb{\sigma})) \dd{x} = \int_{I} f (\partial_{i} \partial_{x}^{k} B_{p}(\mb{\tau})) (\partial_{x}^{k} B_{q}(\mb{\sigma})) \dd{x}.$$
\end{lemma}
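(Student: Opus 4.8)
The plan is to recognise the integrand as a duality pairing between the fixed functional $g \doteq f \cdot \partial_x^k B_q(\mb{\sigma})$ and the one-parameter family $\mb{\tau} \mapsto \partial_x^k B_p(\mb{\tau})$, and then to differentiate under the pairing, reusing the mechanism behind \lem{derivativeLinearForm}. Write $h \doteq \partial_x^k B_q(\mb{\sigma})$: since $B_q(\mb{\sigma}) \in H^q(\RR)$ is piecewise polynomial with compact support, $h$ is a bounded, compactly supported, piecewise polynomial function in $H^{q-k}(\RR)$, hence a pointwise multiplier on $H^{-s}(\RR)$ for every $s \leq q-k$. Setting $g \doteq f h$, the integral in the statement equals the pairing of $g$ with $\partial_x^k B_p(\mb{\tau})$, and the claimed identity reads $\partial_i \langle g, \partial_x^k B_p(\mb{\tau}) \rangle = \langle g, \partial_i \partial_x^k B_p(\mb{\tau}) \rangle$.

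First I would record the parametric regularity of the B-spline family. By \lem{BsplineDerivatives}, the map $B_p : \md{K}_{p+2} \to H^{p-1}(\RR)$ is differentiable in each argument, with derivative $\partial_i B_p$ as given there. Post-composing with the bounded linear operator $\partial_x^k : H^{p-1}(\RR) \to H^{p-k-1}(\RR)$ then shows that $\partial_x^k B_p : \md{K}_{p+2} \to H^{p-k-1}(\RR)$ is differentiable, with derivative $\partial_x^k \partial_i B_p(\mb{\tau}) = \partial_i \partial_x^k B_p(\mb{\tau})$, the last identity being the interchangeability of spatial and parametric differentiation discussed after \lem{BsplineDerivatives}. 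The key point is that this target space is \emph{one order below} the natural space $H^{p-k}(\RR)$ that contains $\partial_x^k B_p(\mb{\tau})$ itself; this loss of one derivative under parametric differentiation is exactly what dictates the hypothesis imposed on $f$.

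It then remains to verify that $g = fh$ defines a bounded linear functional on $H^{p-k-1}(\RR)$. When $p > k$ one has $r = \min(p-k-1, q-k) \geq 0$ with $r \leq p-k-1$, so the multiplier property of $h$ gives $g \in H^{-r}(\RR) \subseteq H^{-(p-k-1)}(\RR) = \bigl(H^{p-k-1}(\RR)\bigr)^{*}$. When $p = k$ the exclusion $(p,q) \neq (k,k)$ forces $q > k$, hence $r = -1$, $f \in H^{1}(I)$, and $h = \partial_x^k B_q(\mb{\sigma}) \in H^{q-k}(\RR) \subseteq H^{1}(\RR)$ is compactly supported; therefore $g = fh$ is an $H^{1}$ function with compact support, i.e. an element of $\bigl(H^{-1}(\RR)\bigr)^{*}$, which is precisely the dual of the space into which $\mb{\tau} \mapsto \partial_x^k B_p(\mb{\tau})$ is differentiable in this regime. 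In all cases, composing the differentiable map $\mb{\tau} \mapsto \partial_x^k B_p(\mb{\tau})$ with the bounded functional $\langle g, \cdot \rangle$ yields differentiability of $\mb{\tau} \mapsto \langle g, \partial_x^k B_p(\mb{\tau}) \rangle$, with derivative $\langle g, \partial_i \partial_x^k B_p(\mb{\tau}) \rangle = \int_I f \, (\partial_i \partial_x^k B_p(\mb{\tau})) \, (\partial_x^k B_q(\mb{\sigma})) \dd{x}$, which is the claim.

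I expect the main obstacle to be the Sobolev-exponent bookkeeping across these two regimes, and in particular making the borderline case $p = k$ rigorous: there $\partial_x^k B_p$ is only differentiable into $H^{-1}(\RR)$, so one must simultaneously use the extra smoothness ($q - k \geq 1$) and the compact support of $\partial_x^k B_q(\mb{\sigma})$, together with the multiplier behaviour of piecewise-polynomial functions on negative-order Sobolev spaces, to ensure that $g$ is an admissible test functional. An alternative, more explicit route avoids the functor-level argument: one can iterate \lem{BsplineDerivatives} to write $\partial_x^k B_p(\mb{\tau}) = \sum_{j=0}^{k} c_j(\mb{\tau}) \, N_{p-k}(\mb{\tau}_{j+1}, \ldots, \mb{\tau}_{j+p-k+2})$ as a finite combination of shifted normalised B-splines of degree $p-k$ with smooth scalar coefficients $c_j$, then apply \lem{derivativeLinearForm} at degree $p-k$ (with $g$ in place of $f$) to each summand and combine the results via the product rule; this reduces the statement directly to the already-established linear-form case, at the cost of carrying the coefficients $c_j$ and their derivatives.
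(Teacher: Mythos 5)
Your argument is correct, and it takes a genuinely more unified route than the paper's. The paper proves \lem{derivativeBilinearForm} by splitting into three cases according to the value of $r$: for $r \geq 1$ it uses the algebra property of $H^{s}(I)$ for $s > 1/2$ (Sobolev embedding) to control the product $\bigl(\partial_{x}^{k} B_{p}\bigr)\bigl(\partial_{x}^{k} B_{q}\bigr)$ in $H^{r}$; for $r = 0$ it uses the pointwise boundedness of $\partial_{x}^{k} B_{q}(\mb{\sigma})$ together with $f \in L^{2}(I)$; and for $r = -1$ (your borderline case $p = k$, $q \geq k+1$) it abandons the abstract pairing entirely and computes the derivative explicitly via the Leibniz integral rule on indicator functions, since $\partial_{x}^{k} B_{p}(\mb{\tau})$ is then piecewise constant and its parametric derivative is a combination of Dirac masses. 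You instead absorb $f\,\partial_{x}^{k} B_{q}(\mb{\sigma})$ into a single fixed functional $g$ on $H^{p-k-1}(\RR)$ and invoke the chain rule for the composition of the differentiable map $\mb{\tau} \mapsto \partial_{x}^{k} B_{p}(\mb{\tau})$ (obtained from \lem{BsplineDerivatives} by post-composing with the bounded operator $\partial_{x}^{k}$) with the bounded functional $\inner{g}{\cdot}$; the case analysis then collapses into checking that the hypothesis $f \in H^{-r}(I)$, with $r = \min(p-k-1, q-k)$, is exactly what makes $g$ admissible on that space. This is cleaner and makes transparent why the exponent $r$ has the form it does, and it matches the mechanism of \lem{derivativeLinearForm} as you intend.

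Two points deserve tightening. First, the multiplier claim should be justified at the level you actually use it: for integer $0 \leq s \leq q-k$, $\partial_{x}^{k} B_{q}(\mb{\sigma})$ lies in $W^{s,\infty}(\RR)$ (its derivatives up to order $q-k$ are bounded piecewise polynomials), so multiplication by it is bounded on $H^{s}$ and, by duality, on $H^{-s}$; the statement ``for every $s \leq q-k$'' as written would otherwise drag in fractional multiplier theory you do not need. Second, in the regime $p = k$ the functional $u \mapsto \int_{I} g u \dd{x}$ is bounded on $H^{-1}(\RR)$ only if $g \mathbf{1}_{I}$ genuinely belongs to $H^{1}(\RR)$, which can fail when $g$ does not vanish at $\partial I$; since the parametric derivative of $\partial_{x}^{p} B_{p}(\mb{\tau})$ is a sum of Dirac masses at the knots, this is precisely where the paper's explicit Leibniz computation (with its indicator $\delta_{c \in \overline{I}}$) does real work that your abstract pairing defers. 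You correctly flag this as the main obstacle, but as written it remains a claim rather than a proof; either restrict to knots in the interior of $I$ (as the constraints of the method ultimately enforce) or fall back on the explicit computation for that case.
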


\begin{proof}
    See \app{derivativeBilinearForm}.
\end{proof}

In other words, \lem{derivativeLinearForm} and \lem{derivativeBilinearForm} give minimal conditions under which spatial integration and parametric differentiation commute. A notable corner case in \lem{derivativeBilinearForm} is the case $(p, q) = (k, k)$. This exclusion is deliberate: differentiability can fail when the knot vectors are allowed to cross. Intuitively, this failure arises because the $k$-th derivatives of both basis functions are piecewise constant and discontinuous only in this case. Knot crossings then cause abrupt changes in the integration domains. In all other cases, at least one derivative is smoother, which smooths out these changes and preserves differentiability. For example, suppose $I = \cc{-1}{+1}$, $\mb{\tau} = (x, 0, 1)$ and $\mb{\sigma} = (y, 0, 1)$ with $-1 \leq x, y < 0$. Then we compute
$$a_{1}(B_{1}(\mb{\sigma}), B_{1}(\mb{\tau})) = 1 - \frac{\max(x, y)}{x y}.$$
This coefficient is continuous in $\co{-1}{0}^{2}$ but not differentiable on the hyperplane $x = y$ due to the maximum function. This type of non-differentiability does not occur when using a single B-spline patch, since the knot vectors remain ordered and do not cross. To keep the presentation easier to follow, we defer the analysis of lowest-degree B-splines to \cite{thesis}.

\lem{derivativeLinearForm} and \lem{derivativeBilinearForm} specify regularity conditions on the data under which linear and bilinear forms are differentiable when evaluated at B-splines of given degrees. In practical applications, however, the data regularity is fixed, and one must instead determine the minimal B-spline degrees that ensure differentiability of the resulting discrete functionals. For the function approximation problem and the Poisson equation, the practical implications are as follows.

From \lem{derivativeLinearForm}:
\begin{itemize}
    \item if $f \in L^{2}(I)$, then $\mb{\tau} \mapsto \int_{I} f B_{p}(\mb{\tau}) \dd{x}$ is differentiable when $p \geq 1$;
    \item if $f \in H^{-1}(I)$, then $\mb{\tau} \mapsto \int_{I} f B_{p}(\mb{\tau}) \dd{x}$ is differentiable when $p \geq 2$.
\end{itemize}
From \lem{derivativeBilinearForm}:
\begin{itemize}
    \item if $k = 0$ and $f \in L^{\infty}(I)$, then $(\mb{\sigma}, \mb{\tau}) \mapsto \int_{I} f B_{p}(\mb{\sigma}) B_{q}(\mb{\tau}) \dd{x}$ is differentiable when $p, q \geq 1$;
    \item if $k = 1$ and $f \in L^{\infty}(I)$, then $(\mb{\sigma}, \mb{\tau}) \mapsto \int_{I} f (\partial_{x} B_{p}(\mb{\sigma})) (\partial_{x} B_{q}(\mb{\tau})) \dd{x}$ is differentiable when $p, q \geq 2$.
\end{itemize}

Here again, the case of a single patch of lowest degree is described in \cite{thesis}. The generalisation of \lem{derivativeLinearForm} and \lem{derivativeBilinearForm} to the $d$-dimensional case follows from the tensor-product structure of the basis functions, of the physical domain, and the separability of the linear and bilinear forms.

\subsection{Uniform boundedness and regularity}
\label{subsect:boundedness-regularity}

We now turn to showing that B-splines satisfy the uniform boundedness and regularity conditions required by \assumlinboundreg{}, under a minimum mesh size condition. For any degree $p \geq 0$ and knot vector $\mb{\tau} \in \md{K}_{p+2}$, we define the \emph{minimum mesh size}
$$h(\mb{\tau}) \doteq \min_{i \in \rg{1}{p+1}} \mb{\tau}_{i+1} - \mb{\tau}_{i}.$$

\begin{lemma}
    \label{lem:boundedness}
    There exist constants $0 < C_{p} \leq (p + 1)^{-1/2}$ such that
    \begin{itemize}
        \item For all $p \geq 0$ and $\mb{\mb{\tau}} \in \md{K}_{p+2}$,
              $$\norm{B_{p}(\mb{\tau})}_{L^{2}(\RR)} \leq C_{p} \abs{\mb{\tau}}^{1/2}.$$
        \item For all $p \geq 1$ and $\mb{\mb{\tau}} \in \md{K}_{p+2}$,
              $$\norm{\partial_{x} B_{p}(\mb{\tau})}_{L^{2}(\RR)} \leq \sqrt{2 p} C_{p-1} h(\mb{\tau})^{-1/2}.$$
        \item For all $p \geq 1$, $i \in \rg{1}{p+2}$ and $\mb{\tau} \in \md{K}_{p+2}$,
              $$\norm{\partial_{i} B_{p}(\mb{\tau})}_{L^{2}(\RR)} \leq \sqrt{\frac{2}{p}} C_{p} h(\mb{\tau})^{-1/2}.$$
        \item For all $p \geq 2$, $i \in \rg{1}{p+2}$ and $\mb{\tau} \in \md{K}_{p+2}$,
              $$\norm{\partial_{i} \partial_{x} B_{p}(\mb{\tau})}_{L^{2}(\RR)} \leq \frac{2 \sqrt{2}}{p - 1} C_{p - 1} h(\mb{\tau})^{-3/2}.$$
    \end{itemize}
\end{lemma}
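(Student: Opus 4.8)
The plan is to prove the four bounds in order, each deduced from the preceding ones together with the derivative identities of \lem{BsplineDerivatives}. Two elementary facts will be used throughout: the rescaling $B_q(\mb{\rho}) = \abs{\mb{\rho}}\,N_q(\mb{\rho})$, which converts any estimate of the form $\norm{B_q(\mb{\rho})}_{L^2(\RR)} \leq C_q\abs{\mb{\rho}}^{1/2}$ into $\norm{N_q(\mb{\rho})}_{L^2(\RR)} \leq C_q\abs{\mb{\rho}}^{-1/2}$; and a gap-counting observation, namely that any knot vector obtained from $\mb{\tau} \in \md{K}_{p+2}$ by deleting $m$ extreme entries --- possibly after first inserting one repeated knot via $\oplus$ --- still spans at least $p+1-m$ of the gaps of $\mb{\tau}$, hence has width at least $(p+1-m)\,h(\mb{\tau})$. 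I would start by \emph{defining} $C_p \doteq \sup\{\norm{B_p(\mb{\tau})}_{L^2(\RR)}\,\abs{\mb{\tau}}^{-1/2} : \mb{\tau} \in \md{K}_{p+2},\ \abs{\mb{\tau}} > 0\}$, which is positive and makes the first bound hold by construction; the estimate $C_p \leq (p+1)^{-1/2}$ then follows from $0 \leq B_p(\mb{\tau}) \leq 1$ (property (3) of \subsect{spline-properties}, so $B_p(\mb{\tau})^2 \leq B_p(\mb{\tau})$) together with the standard integral identity $\int_{\RR} B_p(\mb{\tau})\dd{x} = \abs{\mb{\tau}}/(p+1)$.

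For the second bound I would substitute $\partial_x B_p(\mb{\tau}) = -p\,(N_{p-1}(\mb{\tau}\remBeg) - N_{p-1}(\mb{\tau}\remEnd))$ from \lem{BsplineDerivatives}, expand $\norm{\partial_x B_p(\mb{\tau})}_{L^2(\RR)}^2$, and discard the cross term, which is nonnegative since both $N_{p-1}$ factors are; the remaining $p^2 C_{p-1}^2(\abs{\mb{\tau}\remBeg}^{-1} + \abs{\mb{\tau}\remEnd}^{-1})$ is then controlled using $\abs{\mb{\tau}\remBeg}, \abs{\mb{\tau}\remEnd} \geq p\,h(\mb{\tau})$, which gives exactly $\sqrt{2p}\,C_{p-1}h(\mb{\tau})^{-1/2}$. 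The third bound is of the same type: the parametric derivative formula writes $\partial_i B_p(\mb{\tau})$ as a difference of (or, for $i \in \{1,p+2\}$, a single) nonnegative term $N_p(\cdot)$ over $(p+2)$-knot vectors obtained from $\mb{\tau}\oplus\mb{\tau}_i$ by deleting one extreme entry, hence of width at least $p\,h(\mb{\tau})$; dropping the cross term and using $\norm{N_p(\cdot)}_{L^2(\RR)} \leq C_p(p\,h(\mb{\tau}))^{-1/2}$ delivers the stated bound, with a factor $\sqrt{2}$ to spare in the boundary cases.

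For the fourth bound I would compose the two differentiations: write $\partial_i\partial_x B_p(\mb{\tau}) = \partial_x\partial_i B_p(\mb{\tau})$, insert the formula for $\partial_i B_p(\mb{\tau})$ from the previous step, and differentiate each summand $N_p(\mb{\sigma})$ in $x$ via $\partial_x N_p(\mb{\sigma}) = (p/\abs{\mb{\sigma}})\,(N_{p-1}(\mb{\sigma}\remEnd) - N_{p-1}(\mb{\sigma}\remBeg))$. This exhibits $\partial_i\partial_x B_p(\mb{\tau})$ as a combination of at most four terms $\pm(p/\abs{\mb{\sigma}})\,N_{p-1}(\mb{\rho})$, where $\mb{\sigma}$ has width at least $p\,h(\mb{\tau})$ and $\mb{\rho}$ has width at least $(p-1)h(\mb{\tau})$, all obtained from $\mb{\tau}\oplus\mb{\tau}_i$ by removing extreme entries. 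Bounding $p/\abs{\mb{\sigma}} \leq h(\mb{\tau})^{-1}$, applying $\norm{N_{p-1}(\mb{\rho})}_{L^2(\RR)} \leq C_{p-1}\abs{\mb{\rho}}^{-1/2}$, and discarding the nonnegative cross term inside each paired difference, one reaches a bound of the form $C\,C_{p-1}h(\mb{\tau})^{-3/2}$ with $C$ decreasing in $p$; recovering the precise constant $2\sqrt{2}/(p-1)$ requires grouping the four pieces according to the two successive derivative identities rather than applying one blanket triangle inequality, and counting gaps sharply.

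I expect this last step to be the main obstacle, for two reasons. The auxiliary vectors produced by $\mb{\tau}\oplus\mb{\tau}_i$ carry a repeated knot and therefore have vanishing mesh size, so the earlier bounds cannot be applied to them verbatim and everything must be routed through widths rather than through $h$. Moreover, a crude triangle inequality over the elementary pieces loses a factor of order $\sqrt{p-1}$ relative to the target constant, so the sign structure (nonnegativity of each individual B-spline) must be exploited to cancel the right cross terms, pairing the pieces exactly as the chain of derivative formulas dictates.
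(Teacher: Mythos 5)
Your proposal follows the paper's proof essentially verbatim: $C_p$ is defined as the same supremum and bounded via $0 \le B_p(\mb{\tau}) \le 1$ together with the $L^1$ identity, and each derivative bound is obtained by inserting the recursion from the derivative lemma, dropping the nonnegative cross term, and lower-bounding the widths of the truncated knot vectors by gap counting, with $\abs{\mb{\tau}\oplus\mb{\tau}_i}\ge\abs{\mb{\tau}}$ handling the repeated knot exactly as the paper does. Your planned grouping of the four pieces in the mixed-derivative bound according to the two successive recursions is precisely the paper's computation; the only caveat is that this computation appears to yield a constant $2\sqrt{2}\,(p-1)^{-1/2}$ rather than the stated $2\sqrt{2}\,(p-1)^{-1}$, so your hedge about recovering that constant points at a genuine discrepancy in the paper rather than a gap in your argument.
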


\begin{proof}
    See \app{boundedness}.
\end{proof}

\lem{boundedness} establishes the boundedness of B-splines and their spatial and parametric derivatives under the minimal mesh size assumption. In particular, it implies the boundedness of one-dimensional discrete linear and bilinear forms whenever $h(\mb{\tau}) > 0$. To ensure uniform boundedness of the full $d$-dimensional discrete forms with respect to $\mb{\xi} \in \md{X}$, it suffices to impose a uniform lower bound $h_{\min} > 0$ such that $h(\mb{\xi}_{s, t}) \geq h_{\min}$ for all patches $s \in \rg{1}{r}$ and dimensions $t \in \rg{1}{d}$.

We next examine the continuity properties of one-dimensional B-splines and their derivatives with respect to variations in the knot vector. Specifically, we establish Hölder continuity under a minimum mesh size condition. Given a collection of knot vectors $\mb{\tau}_{k} \in \md{K}_{p_{k} + 2}$, $p_{k} \geq 0$, $k \in \rg{1}{n}$, we define the joint minimum mesh size
$$h(\mb{\tau}_{1}, \ldots, \mb{\tau}_{n}) \doteq \min_{k \in \rg{1}{n}} h(\mb{\tau}_{k}).$$

\begin{lemma}
    \label{lem:holder}
    The constants $C_{p}$ are the same as in \lem{boundedness}.
    \begin{itemize}
        \item For all $\mb{\sigma}, \mb{\tau} \in \md{K}_{2}$,
              $$\norm{B_{0}(\mb{\sigma}) - B_{0}(\mb{\tau})}_{L^{2}(\RR)} \leq 2 C_{0} \norm{\mb{\sigma} - \mb{\tau}}_{2}^{1/2}$$
        \item For all $p \geq 1$ and $\mb{\sigma}, \mb{\tau} \in \md{K}_{p+2}$,
              $$\norm{B_{p}(\mb{\sigma}) - B_{p}(\mb{\tau})}_{L^{2}(\RR)} \leq \sqrt{\frac{2 (p + 1)}{p}} C_{p} h^{-1/2}(\mb{\sigma}, \mb{\tau}) \norm{\mb{\sigma} - \mb{\tau}}_{2}$$
        \item For all $\mb{\sigma}, \mb{\tau} \in \md{K}_{3}$,
              $$\norm{\partial_{x} B_{1}(\mb{\sigma}) - \partial_{x} B_{1}(\mb{\tau})}_{L^{2}(\RR)} \leq 4 C_{0} h^{-1}(\mb{\sigma}, \mb{\tau}) \norm{\mb{\sigma} - \mb{\tau}}_{2}^{1/2}$$
        \item For all $p \geq 1$ and $\mb{\sigma}, \mb{\tau} \in \md{K}_{p+2}$,
              $$\norm{\partial_{x} B_{p}(\mb{\sigma}) - \partial_{x} B_{p}(\mb{\tau})}_{L^{2}(\RR)} \leq 2 \sqrt{\frac{p + 1}{p - 1}} C_{p-1} h^{-3/2}(\mb{\sigma}, \mb{\tau}) \norm{\mb{\sigma} - \mb{\tau}}_{2}$$
        \item For all $i \in \rg{1}{3}$ and $\mb{\sigma}, \mb{\tau} \in \md{K}_{3}$,
              $$\norm{\partial_{i} B_{1}(\mb{\sigma}) - \partial_{i} B_{1}(\mb{\tau})}_{L^{2}(\RR)} \leq 5 C_{1} h^{-1}(\mb{\sigma}, \mb{\tau}) \norm{\mb{\sigma} - \mb{\tau}}_{2}^{1/2}$$
        \item For all $p \geq 1$, $i \in \rg{1}{p+2}$ and $\mb{\sigma}, \mb{\tau} \in \md{K}_{p+2}$,
              $$\norm{\partial_{i} B_{p}(\mb{\sigma}) - \partial_{i} B_{p}(\mb{\tau})}_{L^{2}(\RR)} \leq \frac{2}{p} \sqrt{\frac{2 p + 7}{p - 1}} C_{p} h^{-3/2}(\mb{\sigma}, \mb{\tau}) \norm{\mb{\sigma} - \mb{\tau}}_{2}$$
        \item For all $i \in \rg{1}{4}$ and $\mb{\sigma}, \mb{\tau} \in \md{K}_{4}$,
              $$\norm{\partial_{i} \partial_{x} B_{2}(\mb{\sigma}) - \partial_{i} \partial_{x} B_{2}(\mb{\tau})}_{L^{2}(\RR)} \leq 12 C_{1} h^{-2}(\mb{\sigma}, \mb{\tau}) \norm{\mb{\sigma} - \mb{\tau}}_{2}^{1/2}$$
        \item For all $p \geq 2$, $i \in \rg{1}{p+2}$ and $\mb{\sigma}, \mb{\tau} \in \md{K}_{p+2}$,
              $$\norm{\partial_{i} \partial_{x} B_{p}(\mb{\sigma}) - \partial_{i} \partial_{x} B_{p}(\mb{\tau})}_{L^{2}(\RR)} \leq \frac{4}{p - 1} \sqrt{\frac{p + 7}{p - 2}} C_{p-1} h^{-5/2}(\mb{\sigma}, \mb{\tau}) \norm{\mb{\sigma} - \mb{\tau}}_{2}$$
    \end{itemize}
\end{lemma}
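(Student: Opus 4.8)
The nine inequalities split into two families. In the \emph{Lipschitz} cases — $B_{p}$, $\partial_{x} B_{p}$, $\partial_{i} B_{p}$ for $p \geq 1$, and $\partial_{i} \partial_{x} B_{p}$ for $p \geq 2$ — the object whose $L^{2}(\RR)$-variation is to be estimated admits one further parametric derivative as an $L^{2}(\RR)$ function, whose explicit form is given by \lem{BsplineDerivatives} and whose $L^{2}(\RR)$ norm is controlled by \lem{boundedness}. In the remaining \emph{Hölder-$1/2$} cases — $B_{0}$, $\partial_{x} B_{1}$, $\partial_{i} B_{1}$, $\partial_{i} \partial_{x} B_{2}$ — substituting the formulas of \lem{BsplineDerivatives} shows the functor to be piecewise constant, or more generally jump-discontinuous, in the spatial variable, so that its further parametric derivative is only distributional and the mean-value argument used below does not apply.

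For the Lipschitz family, I would argue by the mean value inequality along the segment $\mb{\tau}(t) \doteq (1 - t) \mb{\tau} + t \mb{\sigma}$, $t \in \cc{0}{1}$. Since $\md{K}_{p+2}$ is convex, $\mb{\tau}(t) \in \md{K}_{p+2}$; and since $h$ is a minimum of affine functions of the knot vector, hence concave, $h(\mb{\tau}(t)) \geq (1 - t) h(\mb{\tau}) + t h(\mb{\sigma}) \geq h(\mb{\sigma}, \mb{\tau})$ all along the segment. By \lem{BsplineDerivatives}, each object $g$ in this family is continuously differentiable as a map into $L^{2}(\RR)$, so $g(\mb{\sigma}) - g(\mb{\tau}) = \int_{0}^{1} \sum_{i} (\mb{\sigma}_{i} - \mb{\tau}_{i}) \, \partial_{i} g(\mb{\tau}(t)) \dd{t}$ as a Bochner integral, whence $\norm{g(\mb{\sigma}) - g(\mb{\tau})}_{L^{2}(\RR)} \leq \sup_{t \in \cc{0}{1}} \norm{\sum_{i} (\mb{\sigma}_{i} - \mb{\tau}_{i}) \, \partial_{i} g(\mb{\tau}(t))}_{L^{2}(\RR)}$. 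For $g = B_{p}$ and $g = \partial_{x} B_{p}$, a Cauchy--Schwarz step together with the norm estimates of \lem{boundedness}, applied with $h(\mb{\tau}(t)) \geq h(\mb{\sigma}, \mb{\tau})$, yields the stated inequalities; the sharp constants (e.g.\ $\sqrt{2(p+1)/p}$ instead of the cruder $\sqrt{2(p+2)/p}$) come from accounting for the overlapping supports and half-size endpoint terms in the family $(\partial_{i} g)_{i}$. For $g = \partial_{i} B_{p}$ and $g = \partial_{i} \partial_{x} B_{p}$, whose second parametric derivatives are not provided by \lem{boundedness}, I would instead use \lem{BsplineDerivatives} and the normalisation $B_{p}(\mb{\tau}) = \abs{\mb{\tau}} N_{p}(\mb{\tau})$ to rewrite $g(\mb{\tau})$ as a fixed finite combination of lower-order functors ($B_{p}$, $\partial_{x} B_{p}$, $B_{p-1}$, $\partial_{j} B_{p-1}$) evaluated at knot vectors that are affine ($1$-Lipschitz) images of $\mb{\tau}$, with scalar coefficients that are reciprocals of affine widths bounded below by $h(\mb{\tau})$; the variation in $\mb{\tau}$ is then controlled by the triangle inequality, the already-proved continuity bullets for those lower-order functors, the norm bounds of \lem{boundedness}, and elementary estimates for differences of the reciprocal-width coefficients (whose support extent is absorbed into the denominator to prevent spurious growth).

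For the Hölder-$1/2$ family, I would expand the functor, via \lem{BsplineDerivatives} and $B_{p}(\mb{\tau}) = \abs{\mb{\tau}} N_{p}(\mb{\tau})$, into a finite sum of elementary blocks $x \mapsto c(\mb{\tau}) \, \pi(x) \, \mathbf{1}_{\co{a(\mb{\tau})}{b(\mb{\tau})}}(x)$, with $a, b$ affine in $\mb{\tau}$, $\pi$ a fixed low-degree polynomial, and $c$ a reciprocal power of an affine width bounded below by $h(\mb{\tau})$. For $B_{0}$ there is a single block with $c \equiv 1$, $\pi \equiv 1$, and the estimate reduces to the measure of the symmetric difference of two intervals, which is at most $\abs{\mb{\sigma}_{1} - \mb{\tau}_{1}} + \abs{\mb{\sigma}_{2} - \mb{\tau}_{2}}$, giving the claim at once. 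For the other three, I would split each block's variation as $c(\mb{\sigma}) \, \pi \, \bigl(\mathbf{1}_{\co{a(\mb{\sigma})}{b(\mb{\sigma})}} - \mathbf{1}_{\co{a(\mb{\tau})}{b(\mb{\tau})}}\bigr) + \bigl(c(\mb{\sigma}) - c(\mb{\tau})\bigr) \pi \, \mathbf{1}_{\co{a(\mb{\tau})}{b(\mb{\tau})}}$. The first, jump-movement term has $L^{2}(\RR)$-norm $\lesssim h^{-k} \norm{\mb{\sigma} - \mb{\tau}}_{2}^{1/2}$, where $h^{-k}$ denotes the pointwise size of the block, because the symmetric difference of two intervals with affinely moving endpoints has measure $\lesssim \norm{\mb{\sigma} - \mb{\tau}}_{2}$; the second, coefficient-variation term has $L^{2}(\RR)$-norm $\lesssim h^{-k - 1/2} \norm{\mb{\sigma} - \mb{\tau}}_{2}$, since differentiating the reciprocal width costs one extra factor $h^{-1}$ while the $L^{2}(\RR)$ norm of $\pi \, \mathbf{1}_{\co{a(\mb{\tau})}{b(\mb{\tau})}}$, carrying a power of the single knot gap $b(\mb{\tau}) - a(\mb{\tau})$, improves the homogeneity by $h^{1/2}$. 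These two terms, of mismatched homogeneity, collapse to the stated bound $\lesssim h^{-k} \norm{\mb{\sigma} - \mb{\tau}}_{2}^{1/2}$ by a two-regime argument: when $\norm{\mb{\sigma} - \mb{\tau}}_{2} \leq h(\mb{\sigma}, \mb{\tau})$ the second term is pointwise dominated by the first, and when $\norm{\mb{\sigma} - \mb{\tau}}_{2} > h(\mb{\sigma}, \mb{\tau})$ the trivial two-term bound $\norm{g(\mb{\sigma})}_{L^{2}(\RR)} + \norm{g(\mb{\tau})}_{L^{2}(\RR)} \lesssim h^{-k}$ from \lem{boundedness} is itself at most $h^{-k} (\norm{\mb{\sigma} - \mb{\tau}}_{2} / h(\mb{\sigma}, \mb{\tau}))^{1/2}$; carrying the constants through reproduces the numbers $2 C_{0}$, $4 C_{0}$, $5 C_{1}$, $12 C_{1}$ in the statement.

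I expect the main obstacle to be computational rather than structural: extracting the exact constants, both from the overlapping-support bookkeeping in the Lipschitz cases and from the two-regime reconciliation of mismatched-homogeneity terms in the Hölder cases, and tracking which power $h^{-k}$ governs each block once the reciprocal widths are absorbed. The one point needing care in the write-up is that the insertion operation $\mb{\tau} \oplus \mb{\tau}_{i}$ entering $\partial_{i} B_{p}$ produces knot vectors with a coalesced pair of knots, so the continuity bullets for the lower-order functors must be applied not on $\md{K}_{p+2}$ itself but on an affine slice where two prescribed coordinates coincide; on such a slice the relevant minimum mesh size is the smallest non-degenerate knot gap, which remains $\geq h(\mb{\tau})$, and the segment argument of the second paragraph goes through verbatim there.
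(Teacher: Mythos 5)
Your overall architecture is genuinely different from the paper's. The paper never invokes a mean-value theorem: its engine is the exact divided-difference identity $\Delta_{k} B_{p}(\mb{\tau}, \epsilon) = \epsilon \left(N_{p}(\mb{\tau}_{\epsilon} \remEnd) \delta_{k \neq p+2} - N_{p}(\mb{\tau}_{\epsilon} \remBeg) \delta_{k \neq 1}\right)$ with $\mb{\tau}_{\epsilon} = \mb{\tau} \oplus (\mb{\tau}_{k} + \epsilon) \in \md{K}_{p+3}$, applied along a one-knot-at-a-time path and summed with Cauchy--Schwarz (Lipschitz cases) or the $\ell^{4/3}$--$\ell^{4}$ H\"older inequality (H\"older cases); the exponent $1/2$ in the low-degree cases falls out because one of the widths in the denominator degenerates to $\epsilon$ itself. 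Your segment-plus-mean-value argument does recover the bullets for $B_{p}$ and $\partial_{x} B_{p}$ (including the $\sqrt{2(p+1)/p}$ constant, via the $(\delta_{i \neq 1} + \delta_{i \neq p+2})$ weighting you correctly identify), and your symmetric-difference computation for $B_{0}$ and the two-regime interpolation for the remaining H\"older bullets are legitimate alternatives. Two caveats even here: (i) the straight-segment Bochner/mean-value step needs the full directional derivative $\sum_{i} (\mb{\sigma}_{i} - \mb{\tau}_{i}) \partial_{i} B_{p}$, whereas \lem{BsplineDerivatives} only supplies coordinatewise partials; since continuity of $\partial_{i} B_{p}$ is itself one of the bullets being proved, you should either switch to a staircase path (one coordinate at a time, as the paper does) or argue $C^{1}$-ness separately to avoid circularity; (ii) in your large-perturbation regime the uniform bound from \lem{boundedness} is $h^{-(k - 1/2)}$, not $h^{-k}$ as you wrote --- with $h^{-k}$ the regime does not close, and note also that for $B_{0}$ the two-term bound $\abs{\mb{\tau}}^{1/2}$ is unbounded, so only the direct symmetric-difference argument works there.

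The genuine gap is in the two hardest bullets, the Lipschitz bounds for $\partial_{i} B_{p}$ and $\partial_{i} \partial_{x} B_{p}$, which is precisely where the paper spends most of its effort. Your plan --- rewrite these as combinations of lower-order functors at affine images of $\mb{\tau}$ and invoke the already-proved bullets --- collides with the problem you flag in your last paragraph but do not resolve: the knot vectors $\mb{\tau} \oplus \mb{\tau}_{i} \remBeg$ carry a coalesced pair, so $h = 0$ there and the earlier bullets, as stated, give an infinite constant; redoing them ``on the slice'' is not verbatim, because a perturbation of $\mb{\tau}_{i}$ moves both copies of the coalesced knot simultaneously. This double perturbation is exactly what the paper's proof isolates via the intermediate vector $\mb{\tau}_{\epsilon k i} = \mb{\tau} \oplus (\mb{\tau}_{k} + \epsilon) \oplus \mb{\tau}_{i}$ alongside $\mb{\tau}_{\epsilon \epsilon k}$, and the resulting bookkeeping $D_{k,i} = (1 + \delta_{i = k})(\delta_{i \neq 1}\delta_{k \neq 1} + \delta_{i \neq p+2}\delta_{k \neq p+2})$, $D_{i}^{2} = 4p + 14$, is the sole source of the $\sqrt{2p+7}$ and $\sqrt{p+7}$ factors in the statement; your sketch never engages with it, so those constants are asserted rather than derived. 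Finally, your claim to ``reproduce'' the numbers $5 C_{1}$ and $12 C_{1}$ overreaches even relative to the paper, whose own appendix defers exactly those two low-degree cases to the companion thesis.
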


\begin{proof}
    See \app{holder}
\end{proof}

In other words, \lem{holder} states that B-splines of sufficiently high degree, along with their spatial and parametric derivatives, are Lipschitz continuous in the $L^{2}$-norm, while B-splines of lower degree only enjoy Hölder continuity.

\begin{remark}
    In \lem{boundedness} and \lem{holder}, we carefully track boundedness and continuity constants rather than relying on implicit or generic bounds. While rough estimates often suffice for exposition, we provide explicit constants to support our convergence analysis and enable principled step size selection (see \lemlocal{} and \lemglobal{} in our companion work).
\end{remark}

We now connect \lem{boundedness} and \lem{holder} to the framework of our companion work. For $k \in \{0, 1\}$ and polynomial degrees $p \geq k + 1$, we have shown that the B-spline functor $B_{p}$ is differentiable as a map into $H^{k}(\RR)$. Under the minimum mesh size assumption, the parametric derivatives $\partial_{i} B_{p}$ are also uniformly bounded and Lipschitz continuous in $H^{k}(\RR)$. This verifies \assumnonlinboundreg{}, and therefore $\DX \mc{K}$ is automatically Lipschitz continuous by a result of our companion work (\lemregularity{}).

The case $p = k$ requires a special treatment since $B_{p}$ is not differentiable in $H^{p}(\RR)$. However, when all knot vectors are derived from a common knot vector and the data is sufficiently smooth, the discrete energy remains differentiable. In \cite{thesis}, we demonstrate that $\DX \mc{K}$ is Hölder continuous for $k \in \{0, 1\}$.

\subsection{Uniform coercivity}
\label{subsect:coercivity}

The global convergence of our \algopt{} hinges on the uniform coercivity of the energy; that is, on a uniform lower bound on the discrete bilinear form. For all $\mb{\xi} \in \md{X}$, let $\mb{A}(\mb{\xi}) \in \RR^{\dim \md{W} \times \dim \md{W}}$ denote the matrix with entries
$$\mb{A}(\mb{\xi})_{ij} = a(\mb{\phi}_{i}(\mb{\xi}), \mb{\phi}_{j}(\mb{\xi})).$$
We recall standard eigenvalue bounds for the mass and stiffness matrices on a single patch; see \cite{ern2006evaluation} for a comprehensive treatment of the condition number in linear systems arising in finite element approximations.
\begin{itemize}
    \item For the mass matrix of the function approximation problem, a standard scaling argument shows
          $$\lambda_{\min}(\mb{A}(\mb{\xi})) \gtrsim h_{\min}^{d}(\mb{\xi}), \qquad \lambda_{\max}(\mb{A}(\mb{\xi})) \lesssim h_{\max}^{d}(\mb{\xi}),$$
          where the constants involved in these bounds are independent of $\mb{\xi}$. This means the conditioning of $\mb{A}(\mb{\xi})$ scales as $O(h_{\max}^{d}(\mb{\xi}) / h_{\min}^{d}(\mb{\xi}))$.
    \item For the stiffness matrix of the Poisson problem, the Poincaré inequality and a discrete inverse inequality combined with the eigenvalue bounds for the mass matrix above show
          $$\lambda_{\min}(\mb{A}(\mb{\xi})) \gtrsim h_{\min}^{d}(\mb{p}), \qquad \lambda_{\max}(\mb{A}(\mb{\xi})) \lesssim h_{\max}^{d}(\mb{p}) h_{\min}^{-2}(\mb{\xi}),$$
          where the constants involved in these bounds are independent of $\mb{\xi}$. In particular, the conditioning of $\mb{A}(\mb{\xi})$ scales as $O(h_{\max}^{d}(\mb{\xi}) / h_{\min}^{d + 2}(\mb{\xi}))$.
\end{itemize}

\subsection{Constraints on the nonlinear parameters}
\label{subsect:constraints}

We constrain the nonlinear parameter space $\md{X}$ to ensure the conformity $V \subset U$ and the structural assumptions of our framework. These constraints include the required trace properties of the function space, a minimum mesh size, and prevent degenerate cases where basis functions lie entirely outside the physical domain, hence contributing nothing to the numerical ansatz.

Note that for all $n \geq 0$, the set $\md{K}_{n}$ is convex, and that the set $\md{X} = \md{K}_{\mb{N}}$ as defined in \subsect{stp-patches} (without additional constraints that we describe below) is convex as a product of convex sets.

Let $I = \cc{x_{-}}{x_{+}}$ be a bounded interval and consider a one-dimensional patch $\md{S}_{n, p}(\mb{\tau})$ for some $\mb{\tau} \in \md{K}_{n+2}$.

\subsubsection{Zero trace}

To enforce the zero-trace condition, it is enough to require $\mb{\tau}_{1} \geq x_{-}$ and $\mb{\tau}_{n+2} \leq x_{+}$. These constraints are linear inequalities, so enforcing them preserves the convexity of the parameter space.

\subsubsection{Preventing zero contributions}

Since each B-spline $B_{i, p}(\mb{\tau})$ is compactly supported, this restriction effectively discards any basis function whose support lies entirely outside $I$. To avoid this, one must ensure that the support of every B-spline has a non-empty intersection with $I$. This is guaranteed if $\mb{\tau}_{p+2} > x_{-}$ and $\mb{\tau}_{n-(p+1)} < x_{+}$. Under these conditions, the restricted space $\md{S}_{n, p}(\mb{\tau})|_{I}$ retains the full set of shape functions and thus its approximation power over $I$. We convert these strict inequalities into large ones and instead enforce $\mb{\tau}_{p+1} \geq x_{-}$ and $\mb{\tau}_{n-p} \leq x_{+}$. The zero-trace condition automatically satisfies this constraint. Here again these constraints are linear inequalities so they preserve the convexity of the parameter space.

\subsubsection{Minimum mesh size}

Two types of minimum mesh size conditions are needed: one within each one-dimensional patch to ensure boundedness and regularity of the parameterisation, and one across patches to guarantee uniform coercivity.

Within a single patch $\md{S}_{n, p}(\mb{\tau})$, the minimum mesh size condition is straightforward: ensure that $\mb{\tau}_{i+1} \geq \mb{\tau}_{i} + h_{\min}$ for all $i \in \rg{1}{n+1}$. These constraints preserve the convexity of the parameter space.

Across patches, minimum mesh size constraints can be formulated in various ways. A practical choice consists in treating each axis separately and imposing a minimum mesh size over the union of all patches along that direction. In that case, the union of patches can be regarded as a single larger patch with a positive minimum mesh size. The spectral estimates of the previous subsection then apply to the global stiffness matrix, ensuring a uniform lower bound on its eigenvalues. This guarantees \assumspd{}, which in turn implies the Hölder continuity of the reduced energy gradients (via \lemregularityreduced{}).

Mathematically, this constraint reads as follows: for any two one-dimensional patches $\md{S}_{n, p}(\mb{\sigma})$ and $\md{S}_{m, q}(\mb{\tau})$ along the same dimension, with $\mb{\sigma} \in \md{K}_{n+2}$ and $\mb{\tau} \in \md{K}_{m+2}$, we impose $\abs{\mb{\sigma}_{i} - \mb{\tau}_{j}} \geq h_{\min}$ for all $i \in \rg{1}{n+2}$ and $j \in \rg{1}{m+2}$.

These constraints are non-convex, but they can be rewritten as disjunctions of linear inequalities: $\mb{\sigma}_{i} - \mb{\tau}_{j} \geq h_{\min}$ or $\mb{\tau}_{j} - \mb{\sigma}_{i} \geq h_{\min}$. As a result, the parameter space splits into a finite union of disjoint convex sets, and \assumspace{} cannot be verified. We describe the complications associated with this non-convexity and a strategy to go around it.

A first complication is that the projection defining the mirror descent step may be set-valued, as there can be multiple global minimisers over a non-convex set. A choice must be taken algorithmically, typically guided by the implementation or initialisation of the projection algorithm.

More significantly, \lemlocal{} no longer applies so the energy may not decrease at every step. Indeed, our proof relies on the inequality
$$\inner{-\gamma \DX \mc{K}(\mb{w}, \mb{\xi}) - \nabla \psi(\mb{\xi}_{+}) + \nabla \psi(\mb{\xi})}{\mb{\xi} - \mb{\xi}_{+}}_{\md{X}} \leq 0,$$
which only holds when the line connecting $\mb{\xi}$ to $\mb{\xi}_{+}$ contains an open neighbourhood of $\mb{\xi}_{+}$. In other words, the energy may increase if $\mb{\xi}_{+}$ lies on the boundary of another convex component than that of $\mb{\xi}$.

One way to address this is to rely on the convergence guarantees of our abstract framework along segments of the trajectory that stay within a single convex component. Since our \algopt{} retains the best solution found, and gradients vanish near an interior minimum, the iterates eventually remain in the corresponding component. At that point, the local and global convergence results apply, and the total number of iterations corresponds to those spent within that component.

\subsubsection{Convexity and projection}
Taking all constraints above into account, the nonlinear parameter space is convex when using a single patch or in the multi-patch setting after the parameter iterates remain within a single convex component. In that case, the projection involved in the mirror descent step, with respect to the Euclidean distance, reduces to a linearly constrained quadratic program. This can be solved using standard quadratic programming techniques \cite[Chapter 16]{nocedal1999numerical}.

In the multi-patch case, when the parameters can jump across convex components, the projection involves minimising a quadratic function subject to quadratic inequalities. These arise by rewriting $\abs{\mb{\sigma}_{i} - \mb{\tau}_{j}} \geq h_{\min}$ as $(\mb{\sigma}_{i} - \mb{\tau}_{j})^{2} \geq h_{\min}^{2}$. Computing the projection classifies as a convex, quadratically constrained quadratic program. It can be efficiently solved using an interior point method \cite[Chapter 6]{nesterov1994interior}.

\subsubsection{Compactness}
The nonlinear parameter space must be compact to guarantee the existence of global minimisers of the discrete energy, and to ensure that these minimisers lie within the parameter space (\assumspace{}). Since all constraints above are stated with large inequalities, the constrained parameter space is closed. To guarantee compactness, it must also be bounded. If zero-trace conformity is enforced, then boundedness follows automatically. Otherwise, we bound the parameter space by requiring $\mb{\tau}_{1} \geq y_{-}$ and $\mb{\tau}_{n+2} \leq y_{+}$ for some $y_{-} \leq x_{-}$ and $y_{+} \geq x_{+}$. In our implementation, we choose $y_{-} = x_{-} - (x_{+} - x_{-})$ and $y_{+} = x_{+} + (x_{+} - x_{-})$.

\subsection{Initialisation of the nonlinear parameters}
\label{subsect:initialisation}

The initial patch configuration is designed to emulate the space of uniform B-splines, so that the approximation space contains the solution of a standard discretisation on a uniform mesh. By initialising the minimisation algorithm with the solution of this uniform space and retaining the solution with the lowest energy, we obtain a guaranteed lower bound on the error and ensure a minimal convergence rate.

\subsubsection{Function approximation problem}

In the function approximation setting with a single patch, the knots are evenly spaced so that the first and last $p$ knots lie outside the domain, and the knots number $p+1$ and $n-p$ coincide with the domain boundaries.

In the multi-patch case, adjacent patches of degree $p$ are made to overlap over $p+1$ knots to emulate a uniform discretisation.

\subsubsection{Poisson problem}

The key difference in the Poisson setting is that all knots must lie within the domain. To emulate uniform B-splines, repeated knots are used instead of knots outside the domain. More precisely, using B-splines of degree $p \geq 1$, the first and last $p-1$ knots are repeated. These repeated knots are fixed (non-trainable) to preserve the differentiability of the energy functional.

In the multi-patch case, only the patches touching the domain boundary at initialisation have repeated knots; the other ones simply overlap with their neighbours at initialisation and all their knots are free.
\subsection{Directional convexity}

Directional convexity of the discrete energy is generally difficult to prove. We verify it explicitly in two simple cases in \cite{thesis}.

\subsection{Summary of the analysis}

The patchwise minimum mesh size condition ensures the boundedness and regularity of the parameterisation (\assumlinboundreg{}) and its parametric derivatives. Moreover, B-splines of degree one more than the highest order involved in the linear and bilinear forms are parametrically differentiable in $U$, satisfying \assumnonlinboundreg{}. This ensures the energy gradient is Hölder in the parameters. For B-splines of the same degree as the variational forms, we have shown the regularity of the energy gradient independently, under additional $H^{1}$ regularity of the data and no knot crossing.

Since the nonlinear parameter space is compact and convex (\assumspace{}), at least after an exploration phase, all the hypotheses guaranteeing local convergence of our \algopt{} hold (\lemlocal{}).

\begin{corollary}[See \lemlocal{}]
    Under a global minimum mesh size condition, and B-spline degree exceeding the \ac{pde} order (or equal if single patch), our \algopt{} converges to a local minimum of the discrete energy.
\end{corollary}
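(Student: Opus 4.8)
The plan is to verify, hypothesis by hypothesis, the structural assumptions required by \lemlocal{} of our companion work for the discrete energy $\mc{K} = \mc{J} \circ \mc{R}$, after which its conclusion transfers verbatim. First I would invoke the tensor-product reduction: since $\Omega$ is a tensor-product domain, $U$ a tensor-product functional space, and $\ell$, $a$ separable, the product rule shows that the differentiability of $\mc{K}$ together with the boundedness and continuity of $\DW \mc{K}$ and $\DX \mc{K}$ reduce to those of the one-dimensional compositions $\ell_{s, t} \circ B_{\mb{p}_{t}}$ and $a_{s, t} \circ (B_{\mb{p}_{t}}, B_{\mb{q}_{t}})$ together with their parametric derivatives. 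It then suffices to control the univariate B-spline functor $B_p$ and its spatial, parametric and mixed derivatives.

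Under the patchwise minimum mesh size condition $h_{\min} > 0$, I would use \lem{boundedness} to obtain uniform $L^{2}$ bounds on $B_p(\mb{\tau})$, $\partial_x B_p(\mb{\tau})$, $\partial_i B_p(\mb{\tau})$ and $\partial_i \partial_x B_p(\mb{\tau})$, and \lem{derivativeLinearForm}, \lem{derivativeBilinearForm} to exchange spatial integration with parametric differentiation; together these verify \assumlinboundreg{}. When the B-spline degree strictly exceeds the highest order $k$ of derivatives appearing in $\ell$ and $a$, \lem{holder} upgrades the parametric derivatives $\partial_i B_p$ to Lipschitz continuity in $H^{k}(\RR)$, which gives \assumnonlinboundreg{}, and then \lemregularity{} makes $\DX \mc{K}$ Lipschitz continuous, so \assumenergyreg{} holds. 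In the single-patch case with degree equal to $k$ --- the ``or equal if single patch'' clause --- $B_p$ fails to be differentiable in $H^{p}(\RR)$, but the knots of a single patch remain ordered (no knot crossing) and, under the extra $H^{1}$ data regularity noted earlier, the energy gradient is still (Hölder) continuous by the argument deferred to \cite{thesis}, which again suffices for \assumenergyreg{}. Finally, the global minimum mesh size condition feeds the spectral estimates of \subsect{coercivity}: for both the mass and the stiffness matrices, $\lambda_{\min}(\mb{A}(\mb{\xi})) \gtrsim h_{\min}^{d}(\mb{\xi})$ with a constant independent of $\mb{\xi}$, which is precisely the uniform positive-definiteness required by \assumspd{}.

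It then remains to handle the nonlinear parameter space. In the single-patch case, the constraints of \subsect{constraints} --- zero trace, non-empty support intersection, patchwise minimum mesh size, and an outer box for boundedness --- are all linear inequalities, so $\md{X}$ is closed, bounded (hence compact) and convex, which is \assumspace{}; compactness also guarantees that a global minimiser of $\mc{K}$ exists and lies in $\md{X}$. With \assumspd{}, \assumspace{}, \assumlinboundreg{}, \assumenergyreg{} and \assumnonlinboundreg{} all in force, I would conclude by applying \lemlocal{}, which yields convergence of the \algopt{} to a local minimum of $\mc{K}$.

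\emph{The main obstacle} is the multi-patch case, where the across-patch constraint $\abs{\mb{\sigma}_i - \mb{\tau}_j} \geq h_{\min}$ is non-convex, so $\md{X}$ is only a finite union of disjoint convex components and \assumspace{} fails globally. Following \subsect{constraints}, I would instead apply the abstract analysis only along trajectory segments that stay within a single convex component: since the \algopt{} retains the best iterate and $\DX \mc{K}$ vanishes near an interior minimum, the iterates eventually remain in one component, on which \lemlocal{} applies with the iteration count measured from entry into that component. The delicate point is making the ``eventually trapped in one component'' statement rigorous --- ruling out an infinite sequence of inter-component jumps without monotone energy progress --- which should follow from the fact that a jump is accepted only if it does not increase the retained best energy, combined with the monotone descent estimate inside each component supplied by \lemlocal{}.
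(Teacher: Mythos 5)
Your proposal is correct and follows essentially the same route as the paper: the paper's "proof" is precisely the preceding summary paragraph, which verifies \assumlinboundreg{}, \assumnonlinboundreg{}/\assumenergyreg{}, \assumspd{} and \assumspace{} via the tensor-product reduction, \lem{boundedness}, \lem{holder}, the differentiability lemmas, the spectral estimates, and the constraint analysis, then invokes \lemlocal{} (handling the multi-patch non-convexity by restricting to a single convex component after an exploration phase, exactly as you do). The only caveat you share with the paper is that the trapping-in-one-component argument is left informal, and that at the borderline degree $p = k+1$ the mixed derivative is only Hölder rather than Lipschitz continuous, which is all \lemlocal{} actually requires.
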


Regarding global convergence, uniform coercivity is ensured under the assumption of a global minimum mesh size. The directional convexity condition (\assumdircon{}) is generally challenging to verify analytically, but we will verify it numerically on some examples in our numerical experiments.

\begin{corollary}[See \lemglobal{} and \corolcea{}]
    Under a global minimum mesh size condition, B-spline degree exceeding the \ac{pde} order (or equal if single patch), and assuming directional convexity, our \algopt{} converges to a global minimum of the discrete energy. With the same notations as in our companion work, these assumptions yield a nonlinear analogue of Céa's lemma: the $n$-th iterate $\mb{\xi}_{n}$ of our \algopt{} satisfies
    $$\norm{\red{\mc{R}}(\mb{\xi}_{n}) - u^{\star}}_{a}^{2} \leq \inf_{v \in V} \norm{v - u^{\star}}_{a}^{2} + \frac{2 \red{L} \delta_{\psi}^{\star}(\mb{\xi}_{0})}{n}.$$
    In other words, $\red{\mc{R}}(\mb{\xi}_{n})$ approaches an optimal approximation of $u^{\star}$ in $V$ at a rate $O(n^{-1/2})$.
\end{corollary}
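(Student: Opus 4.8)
The statement is an application corollary: the real work is to check that the concrete free-knot B-spline discretisation satisfies, one by one, the structural hypotheses of the global convergence result of our companion work (\lemglobal{}) together with its Céa-type consequence (\corolcea{}), after which those abstract results can be invoked verbatim. The plan is therefore a verification-and-assembly argument, with the only genuinely new content being the final algebraic passage from an energy-decrease rate to an error bound in the $a$-norm.

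First I would assemble the regularity of the parameterisation. \subsect{boundedness-regularity} already shows, via \lem{boundedness} and \lem{holder}, that under a patchwise minimum mesh size the B-spline basis functions and their spatial, parametric and mixed derivatives are uniformly bounded and Hölder continuous in the $L^{2}$ norm, and Lipschitz continuous once the spline degree strictly exceeds the order of the variational forms. Combined with the tensor-product separability of $\mc{J}$ recorded in the previous subsection, this yields \assumlinboundreg{}; when the B-spline degree exceeds the \ac{pde} order, it also yields \assumnonlinboundreg{}, and then \assumenergyreg{} follows automatically by \lemregularity{}. In the single-patch case with degree equal to the differential order, where $B_{p}$ fails to be differentiable into $H^{p}(\RR)$, the same conclusion is obtained from the separate no-knot-crossing argument (with the extra $H^{1}$ regularity of the data) referred to in \subsect{boundedness-regularity}.

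Next I would record the geometric hypotheses. The spectral estimates of \subsect{coercivity} show that a uniform lower bound $h_{\min}>0$ on the global mesh size makes $\mb{A}(\mb{\xi})$ uniformly positive definite, with $\lambda_{\min}(\mb{A}(\mb{\xi})) \gtrsim h_{\min}^{d}$ independently of $\mb{\xi}$, which is exactly \assumspd{}; and the constraint analysis of \subsect{constraints} shows the constrained nonlinear parameter space is compact and convex (in the multi-patch setting, after the iterates of the \algopt{} remain within a single convex component, which eventually occurs since the algorithm keeps the best iterate and gradients vanish near interior minima), giving \assumspace{}. Together with the assumed directional convexity \assumdircon{}, all hypotheses of \lemglobal{} and \corolcea{} are met, so the iterates converge to a global minimiser of $\mc{K} = \mc{J}\circ\mc{R}$ and, in the notation of the companion work, the reduced energy satisfies $\mc{K}(\mb{\xi}_{n}) - \inf \mc{K} \le \red{L}\,\delta_{\psi}^{\star}(\mb{\xi}_{0})/n$. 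Finally, writing $u^{\star}$ for the Riesz representative of $\ell$ with respect to $a$, the identity $\mc{J}(u) = \tfrac{1}{2}\norm{u - u^{\star}}_{a}^{2} - \tfrac{1}{2}\norm{u^{\star}}_{a}^{2}$ turns energy minimisation over $V$ into $a$-orthogonal projection onto $V$; substituting this into both $\mc{J}(\red{\mc{R}}(\mb{\xi}_{n}))$ and $\inf_{v\in V}\mc{J}(v)$, subtracting, and multiplying by $2$ gives precisely the stated inequality and the $O(n^{-1/2})$ rate on $\norm{\red{\mc{R}}(\mb{\xi}_{n}) - u^{\star}}_{a}$.

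I expect the main obstacle to be not any single estimate — those are all in place — but the careful matching of the degenerate case where the spline degree equals the \ac{pde} order. There \assumnonlinboundreg{} is unavailable and one must show that the alternative single-patch, no-crossing argument still produces a uniformly (Hölder) continuous $\DX\mc{K}$ compatible with the step-size prescription underpinning \lemglobal{}. A secondary subtlety is the non-convexity of the inter-patch mesh-separation constraints in the multi-patch setting: one must justify that restricting the convergence analysis to trajectory segments lying inside a single convex component, and counting iterations from the time the iterates settle there, is enough to recover the stated global convergence and rate.
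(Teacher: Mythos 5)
Your proposal is correct and follows essentially the same route as the paper: the corollary is proved there exactly as a verification-and-assembly argument, checking \assumlinboundreg{}, \assumnonlinboundreg{} (hence \assumenergyreg{} via \lemregularity{}), \assumspd{} via the spectral estimates of \subsect{coercivity}, \assumspace{} via \subsect{constraints} (with the same caveat about convex components in the multi-patch case), and then invoking \lemglobal{} and \corolcea{} of the companion work under the assumed directional convexity. Your explicit final step converting the energy-decrease rate into the $a$-norm bound via $\mc{J}(u) = \tfrac{1}{2}\norm{u - u^{\star}}_{a}^{2} - \tfrac{1}{2}\norm{u^{\star}}_{a}^{2}$ is exactly the content of \corolcea{}, which the paper simply cites rather than rederives.
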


\subsection{Approximation properties}

We conclude this section by summarising the approximation properties of free-knot splines and how they motivate our choice of approximation space.

We recall some results from linear and nonlinear approximation theory in dimension $1$. For $k, n \geq 1$, let $\tilde{S}_{k, n}(\Omega)$ denote the space of piecewise polynomials of degree $k - 1$ with $n - 1$ free knots in $\Omega$ ($n + 1$ knots in total). Let then $S_{k, n}(\Omega) = \tilde{S}_{k, n}(\Omega) \cap C^{k-2}(\Omega)$ denote the space of splines of degree $k - 1$ with $n + 1$ knots.

It is well-known that for all $u \in H^{2}(\Omega)$, the piecewise linear interpolant of $u$ on a uniform partition of $\Omega$ into $n$ pieces converges to $u$ at a rate $n^{-2}$ in the $L^{2}$-norm and $n^{-1}$ in the $H^{1}$-seminorm, so
$$\inf_{v \in S_{2, n}(\Omega)} \norm{u - v}_{L^{2}(\Omega)} \lesssim n^{-2} \norm{u''}_{L^{2}(\Omega)}, \qquad \inf_{v \in S_{2, n}(\Omega)} \abs{u - v}_{H^{1}(\Omega)} \lesssim n^{-1} \norm{u''}_{L^{2}(\Omega)},$$
where the constants involved depend only on $\Omega$.

These estimates require $u \in H^{2}(\Omega)$. In contrast, adaptive knot placement allows the same convergence rates for far rougher functions. Specifically, for $u$ in the Besov space $B^{2,2/5}(\Omega)$, which is much larger than $H^2$ and barely embedded in $L^2$, the same $L^2$ convergence rate holds. Similar results apply in the $H^1$-seminorm using $B^{1,2/3}$ regularity. These results, originating in nonlinear approximation theory \cite{petrushev1988direct}, illustrate the benefit of mesh adaptivity.

In higher dimensions, the theoretical understanding of free-knot splines is more limited. However, our construction includes as a subset the space of tensor-product B-splines with uniform knots, whose approximation power is well studied in the \ac{iga} community \cite[Section 3]{bazilevs2006isogeometric}, \cite[Appendix 3.B]{cottrell2009isogeometric}. In particular, if $u \in H^s(\Omega)$ with $s \leq p$, the standard estimate
$$\inf_{v \in V} \abs{u - v}_{H^{r}(\Omega)} \lesssim h^{s - r} \abs{u}_{H^{s}(\Omega)}$$
holds for all $0 \leq r \leq s \leq p$, where $h$ is the maximum element size.

By placing overlapping patches appropriately, our construction can replicate classical hierarchical B-spline spaces \cite{giannelli2012thb} (both truncated and untruncated) as well as sparse grid layouts \cite{bungartz2004sparse}. It enables not only patch-wise adaptivity, but also cross-patch adaptivity, where overlapping patches can work together to resolve local features, effectively distributing resolution across patch boundaries. This combines the strengths of tensor-product and hierarchical methods with greater flexibility in both geometry and refinement.

\section{Numerical experiments}
\label{sect:experiments}
We begin this section with a series of numerical experiments that illustrate and validate the key features of our variational framework. Starting from simple exploratory cases, we progressively investigate more complex one- and two-dimensional problems in function approximation and Poisson equations.

\subsection{Note on implementation}

All the bilinear forms considered in this work only involve piecewise polynomials and are therefore computed exactly using Gaussian quadratures of appropriate order. The linear forms, involving the data, are approximated with adaptive quadratures using the Simpson rule, with a tolerance of $10^{-12}$. We impose a patchwise minimum mesh size of $10^{-6}$ for all experiments.

The linear and bilinear forms are assembled into one-dimensional matrices and vectors at the patch level, significantly reducing storage compared to assembling full multidimensional matrices. This approach exploits sum factorisation \cite{antolin2015efficient}, combining one-dimensional operators via the tensor-product structure to efficiently apply multidimensional operators without forming large global matrices. Gradients with respect to nonlinear parameters are computed through matrix-vector products using analytical derivatives of the basis functions, enabling straightforward and scalable optimisation across varying problem sizes.

Our implementation is written in Julia using a custom codebase. Constraints are imposed via the optimisation modelling package \texttt{JuMP.jl}, with the interior point optimiser \texttt{Ipopt.jl} used as the nonlinear solver.

\subsection{Weak formulations}
We consider two weak formulations in our experiments. The first one is the simple function approximation problem: given $f \in L^{2}(\Omega)$, find $u \in L^{2}(\Omega)$ such that $(a, v) = \ell(v)$ for all $v \in L^{2}(\Omega)$, where
$$a(u, v) \doteq \int_{\Omega} u v \dd{\Omega}, \qquad \ell(v) \doteq \int_{\Omega} f v \dd{\Omega}.$$
This formulation serves primarily to test the approximation properties of free-knot B-splines and to verify the correctness of our optimisation algorithm.

We also consider the Poisson problem with homogeneous boundary conditions: given $f \in H^{-1}(\Omega)$, find $u \in H^{1}_{0}(\Omega)$ such that $a(u, v) = \ell(v)$ for all $v \in H^{1}_{0}(\Omega)$, where
$$a(u, v) \doteq \int_{\Omega} \nabla u \cdot \nabla v \dd{\Omega}, \qquad \ell(v) \doteq \int_{\Omega} f v \dd{\Omega}.$$
This second problem is the main focus of our study, as it is representative of self-adjoint second-order elliptic differential operators.

\subsection{Exploratory case studies}

In \cite{thesis}, we perform a series of exploratory experiments addressing several structural aspects of the discrete energy functional. We first examine the sharpness of the assumptions required for differentiability. We then investigate the possible degeneracy of global minimisers, including scenarios in which the minimising set comprises multiple non-isolated components. Further experiments assess how constraints in parameter space affect admissibility and the resulting solution structure. Finally, we study the smoothness of the energy landscape and the geometry of basins of attraction, with particular attention to how their size and shape evolve as the regularity of the continuous solution varies.

\subsection{Optimisation procedure}

We now give some detail regarding the implementation of the optimisation loop, which is common for the remaining experiments of this work. Our goal is not to eliminate optimisation error entirely, but rather to demonstrate that competitive accuracy can be achieved with minimal computational effort. Therefore, we rely on the algorithm proposed in our companion work, but we make several adjustments to improve computational efficiency.

At each iteration, the linear system definining the best linear parameters is solved using \ac{cg}, with a tolerance of $10^{-12}$. To reduce computational cost during training, the linear system is solved to full precision only every $25$ iterations; in between, the number of \ac{cg} iterations is capped at $100$.

We use the ADAM optimiser \cite{kingma2014adam} followed by projection on the feasible parameter set in place of mirror descent, as it usually converges faster to a stationary point. We choose the following hyperparameters: $\beta_{1} = 0.9$ (first momentum), $\beta_{2} = 0.99$ (second momentum), $\epsilon = 10^{-8}$ (offset to avoid divisions by zero). We use a custom learning rate scheduler of the form $\eta(t) = (1 - \exp(-t/50))\eta$, where $t$ is the iteration count and $\eta$ is the asymptotic learning rate. This warm-up strategy starts with a near-zero learning rate and gradually increases toward $\eta$, helping to stabilise early iterations where gradients can be large. In our case, this prevents abrupt mesh movements at initialisation, which would otherwise result in jagged or unstable mesh trajectories.

Each optimisation is performed for up to $1000$ iterations, with early stopping triggered if the total displacement of the nonlinear parameters between two consecutive iterations is smaller than $10^{-6}$ in 1D and $10^{-4}$ in 2D. Since experiments with more unknowns take longer to converge, we increase the maximum number of iterations to $3000$ when the number of \acp{dof} exceeds $1000$. We select the best-performing constant step size from $\{10^{-1}, 5 \times 10^{-2}, 2 \times 10^{-2}, 10^{-2}, 5 \times 10^{-3}, 2 \times 10^{-3}, 10^{-3}, 5 \times 10^{-4}\}$ in each run. In practice, we observe that the optimal learning rate is typically $10^{-2}$ for moderate numbers of mesh parameters, and dropping to $2 \times 10^{-3}$ as the number of parameters grows.

\subsection{1D experiments}

We compare the approximation properties of various spaces defined by different B-spline degrees (ranging from $1$ to $5$) and varying numbers of free nonlinear parameters ($1$ to $5$, $10$, $20$, $50$, $100$, $200$ and $500$). In each case, we initialise the knots configuration uniformly over the domain, so that after solving the initial linear system, the resulting model coincides with the standard B-spline approximation on a uniform mesh.

\subsubsection{Function approximation}

We first look at a function approximation problem. We choose to approximate a function with a sharp transition around several points in the domain $\Omega = \cc{-1}{+1}$, expecting that our optimisation method will be able to concentrate the mesh around them. Specifically, we approximate
$$f(x) = \sin(2 x + 0.4) \operatorname{sign}_{\epsilon}(\sin(2 x + 0.4)), \qquad \operatorname{sign}_{\epsilon}(x) = \frac{x}{\sqrt{x^{2} + \epsilon}},$$
where $\operatorname{sign}_{\epsilon}: \RR \to \oo{-1}{+1}$ is a smooth approximation of the sign function when $\epsilon \to 0$. We arbitrarily set $\epsilon = 0.01$. This function is shown in \fig{1-approx-pointwise}.

Since the target function lies in $L^{2}(\Omega)$, our analysis shows that the discrete energy functional is continuously differentiable for all B-spline degrees $p \geq 1$. The gradient of the energy is Hölder continuous with respect to the nonlinear parameters when $p = 1$, and becomes Lipschitz continuous for all $p \geq 1$. For $p = 0$, since the target function is in fact in $C^{\infty}(\Omega)$, and has bounded derivatives, it is also Lipschitz and the energy is differentiable and Lipschitz in the mesh parameters.

\fig{1-approx-convergence} presents the error in the energy norm as a function of the number of linear \acp{dof}, comparing uniform and adapted meshes for B-spline degrees $0$ through $5$. As expected, uniform meshes exhibit the standard convergence rate of $O(h^{p+1})$, that is $O(n^{-(p+1)})$ in terms of the number of \acp{dof}. In contrast, free-knot B-splines achieve substantial improvements, up to two orders of magnitude, by concentrating resolution in regions of reduced regularity. The benefits of adaptivity, however, depend on the number of free parameters: when too few are available, the mesh lacks sufficient flexibility to significantly improve the approximation; when too many are used, the uniform mesh is already fine enough to resolve the target features. The most notable gains occur for a moderate number of free parameters, where adaptive refinement can be effectively targeted.

\begin{figure}
    \centering
    \includegraphics[width=0.45\linewidth]{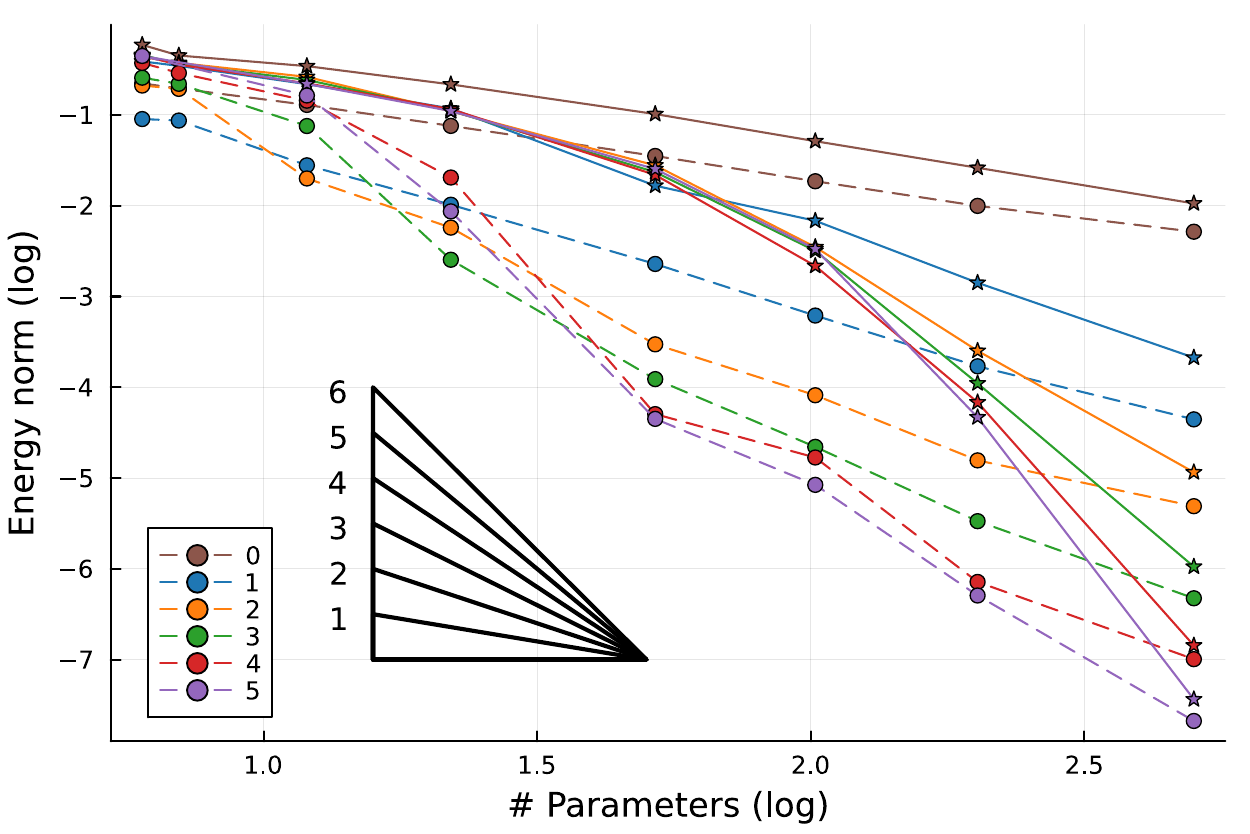}
    \caption{Convergence plot for the approximation problem in 1D with B-splines of degrees $0$ to $5$, in the energy norm. Solid lines represent uniform meshes while dashed lines correspond to adapted meshes.}
    \label{fig:1-approx-convergence}
\end{figure}

We now present a detailed example illustrating the behaviour of the optimisation process. \fig{1-approx-knots} shows the evolution of the knot positions over the course of training, in the case of the linear B-spline with $23$ free shape functions ($24$ free knots). As optimisation progresses, the knots gradually concentrate in regions where the target function exhibits sharp transitions, reflecting the adaptive refinement of the mesh. Interestingly, the outmost knots progressively move outside the domain, and the rightmost knot even reaches the artificial boundary $x = -3$, after which it is projected back to $x = -3$ in the remainder of the training. Additionally, the trajectories of some interior knots suggest potential collisions, which are prevented by our projection mechanism enforcing a minimum mesh size. In several instances, knots appear to approach one another closely, effectively attempting to collide, before being repelled and redistributed automatically. Finally, \fig{1-approx-pointwise} compares the initial solution, based on the uniform mesh, with the final solution obtained from the adapted mesh. The adapted solution clearly captures fine-scale features and localised variations that are missed by the uniform approximation.

\begin{figure}
    \centering
    \subfloat[\label{fig:1-approx-pointwise}]{
        \includegraphics[width=0.45\linewidth]{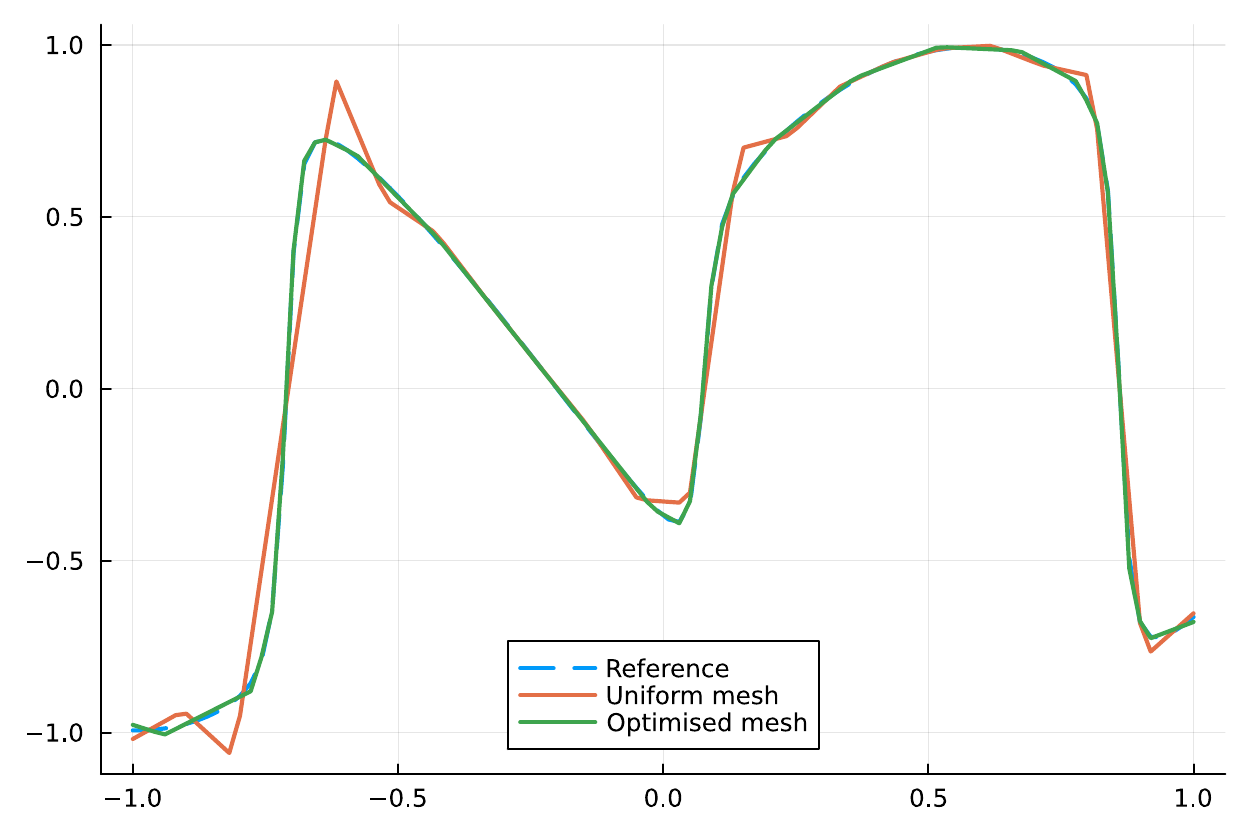}
    }%
    \hfill%
    \subfloat[\label{fig:1-approx-knots}]{
        \includegraphics[width=0.45\linewidth]{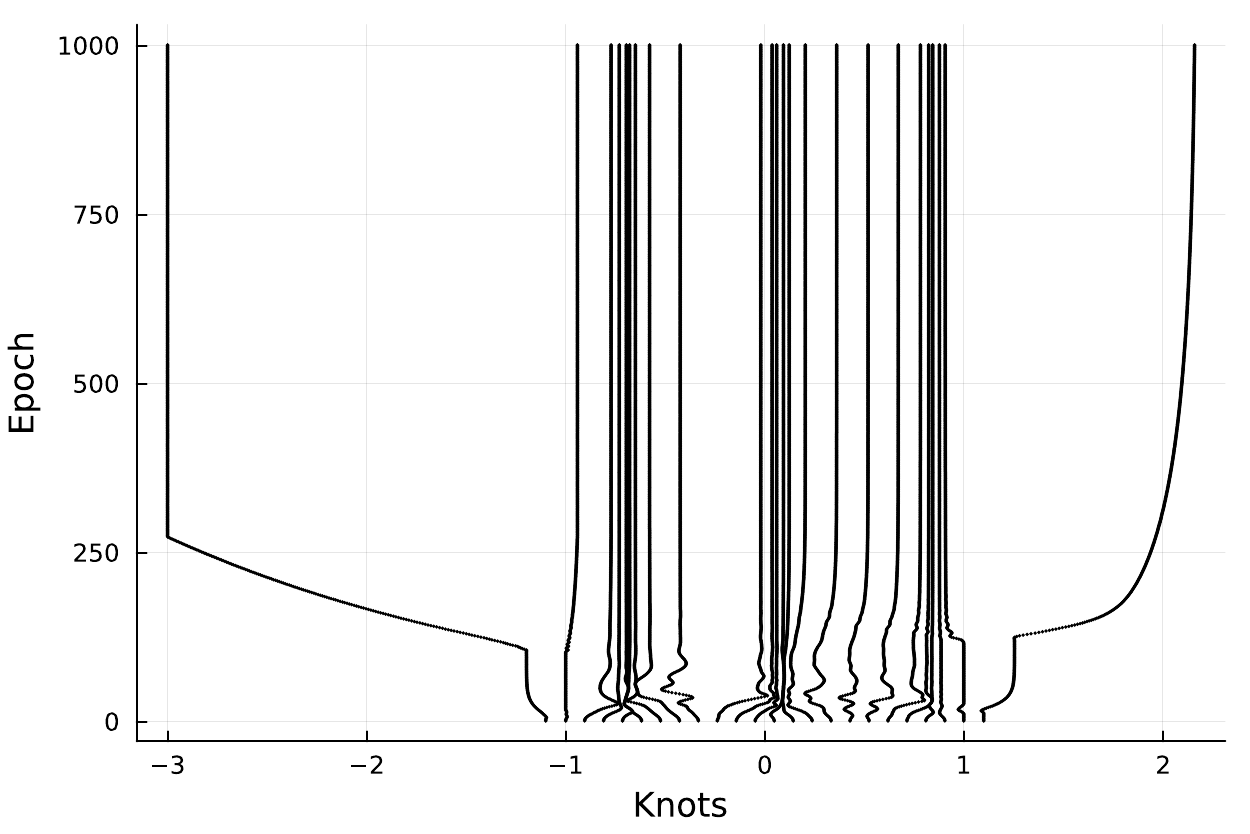}
    }%
    \caption{Function approximation problem with a linear free-knot B-spline and $12$ free knots. (\protect\subref{fig:1-approx-pointwise}) Pointwise error for the solutions on the uniform and optimised meshes. (\protect\subref{fig:1-approx-knots}) Evolution of the nonlinear parameters during the optimisation process.}
    \label{fig:1-approx}
\end{figure}

\subsubsection{Poisson problem}
We now turn to solving a one-dimensional Poisson problem. Using the method of manufactured solution, we select the right-hand side so that the exact solution is
$$u^{\star}(x) = (x^{2} - 1) \tanh(100 \sin(x - 0.3)),$$
which satisfies homogeneous Dirichlet boundary conditions at $x = \pm 1$. This function is smooth but exhibits sharp localised transitions due to the steep gradient of the hyperbolic tangent term, creating a problem that is difficult to resolve on uniform meshes.

In this example, the source term lies in $L^{2}(\Omega)$, so our analysis shows that the discrete energy functional is continuously differentiable for all B-spline degrees $p \geq 2$. The gradient of the energy is Hölder continuous with respect to the nonlinear parameters when $p = 2$, and becomes Lipschitz continuous for all $p \geq 3$. For the case $p = 1$, since the source term in fact belongs to $C^{\infty}(\overline{\Omega})$ and has bounded derivatives, we infer that the source is Lipschitz, so the energy is differentiable and Lipschitz in the mesh parameters.

\begin{figure}
    \centering
    \includegraphics[width=0.45\linewidth]{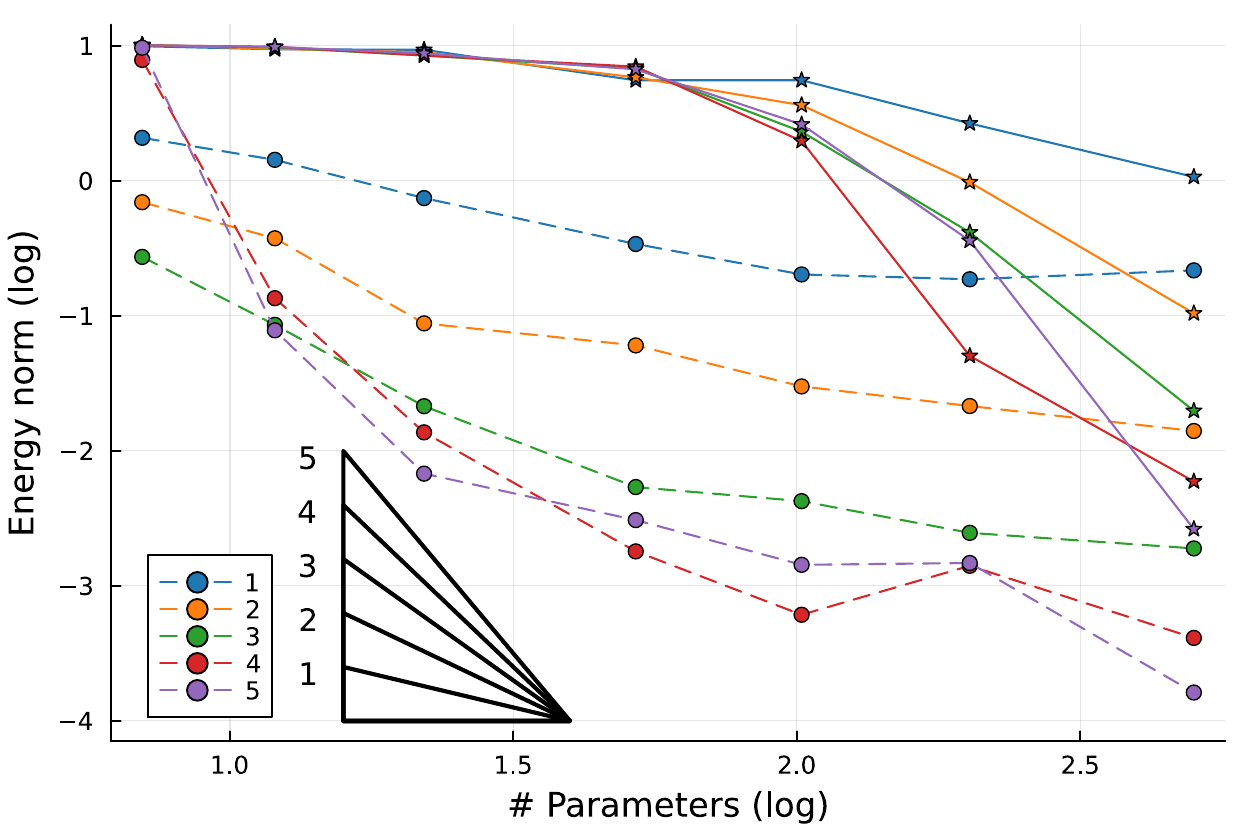}
    \caption{Convergence rate for the Poisson problem in 1D with B-splines of degrees $1$ to $5$. Dashed lines correspond to uniform meshes, while solid lines represent adapted meshes.}
    \label{fig:1_poisson_convergence}
\end{figure}

\fig{1_poisson_convergence} shows the convergence of the numerical solution in the energy norm as a function of the number of linear \acp{dof}, for B-spline degrees ranging from $1$ to $5$. For sufficiently fine discretisations, uniform B-splines exhibit the expected convergence rate of $O(h^{p})$ in the energy norm ($H^{1}$ semi-norm), which corresponds to a rate of $O(n^{-p})$ with respect to the number of \acp{dof}. In contrast, free-knot B-splines achieve substantial error reduction, up to three orders of magnitude, for moderate numbers of \acp{dof}, by concentrating resolution in regions of lower regularity. However, as the number of \acp{dof} increases, the associated optimisation problem becomes more difficult, and accurate numerical integration becomes increasingly critical. Some fluctuations in the error curves suggest that convergence has not been fully attained in these cases, despite efforts to improve stability by tuning the learning rate.

\subsection{2D experiments}

We now turn to two-dimensional experiments. We construct free-knot tensor-product B-spline spaces as sums of $1$, $4$, or $9$ patches, initially arranged in $1 \times 1$, $2 \times 2$, or $3 \times 3$ layouts, respectively. For each polynomial degree $p \in \rg{1}{5}$ (and also degree $0$ for the function approximation problem), we consider the following configurations:
\begin{itemize}
    \item For all three layouts ($1$, $4$, and $9$ patches), we consider grids of $k \times k$ cells per patch, for $k \in \rg{p}{5}$.
    \item For $1$ patch, we additionally test finer resolutions with $10 \times 10$, $20 \times 20$, and $50 \times 50$ cells per patch.
    \item For $4$ patches, we include configurations with $7 \times 7$, $10 \times 10$ and $15 \times 15$ cells per patch.
    \item For $9$ patches, we also test $10 \times 10$ cells per patch.
\end{itemize}

All knot vectors are initialised to emulate a uniform B-spline space, as described in \subsect{initialisation}.

\subsubsection{Function approximation}

We begin with a two-dimensional function approximation problem. Since the target function must be expressed as a sum of tensor-product functions, we consider a truncated version of an exponential composed with a function of $x$ and $y$. Specifically, we approximate the function $(x, y) \mapsto \exp(-\alpha(g(x) - h(y))^{2})$, where $\alpha = 5$, $g(x) = \cos(\pi x + 1)$ and $h(y) = 3 y$ by
$$f(x, y) = \sum_{k = 0}^{N} \frac{(2 \alpha)^{k}}{k!} \exp(-\alpha g(x)^{2}) g(x)^{k} \exp(-\alpha h(y)^{2}) h(y)^{k},$$
where we take $N = 10$ in this experiment. This function exhibits anisotropic features that are well suited for testing the approximation power of our free-knot B-spline spaces, as can be seen on \fig{2-approx-solution}.

The results are summarised in convergence plots in \fig{2-approx-convergence}, where the vertical axis shows the energy norm of the approximation error and the horizontal axis represents the number of linear \acp{dof}. Each polynomial degree is shown in a different color for visual clarity.

\begin{figure}
    \centering
    \subfloat[$1$ patch\label{fig:2-approx-convergence-1}]{
        \includegraphics[width=0.45\linewidth]{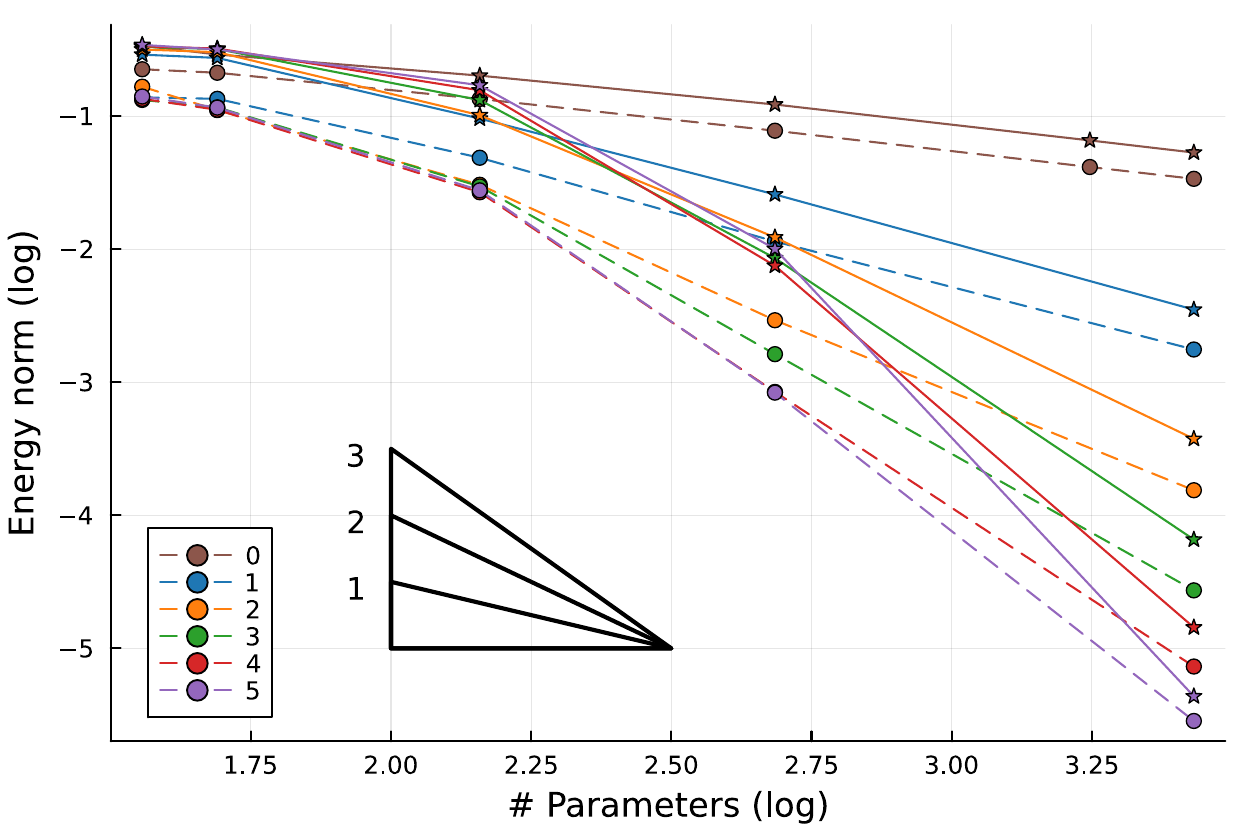}
    }%
    \hfill%
    \subfloat[$4$ patches\label{fig:2-approx-convergence-2}]{
        \includegraphics[width=0.45\linewidth]{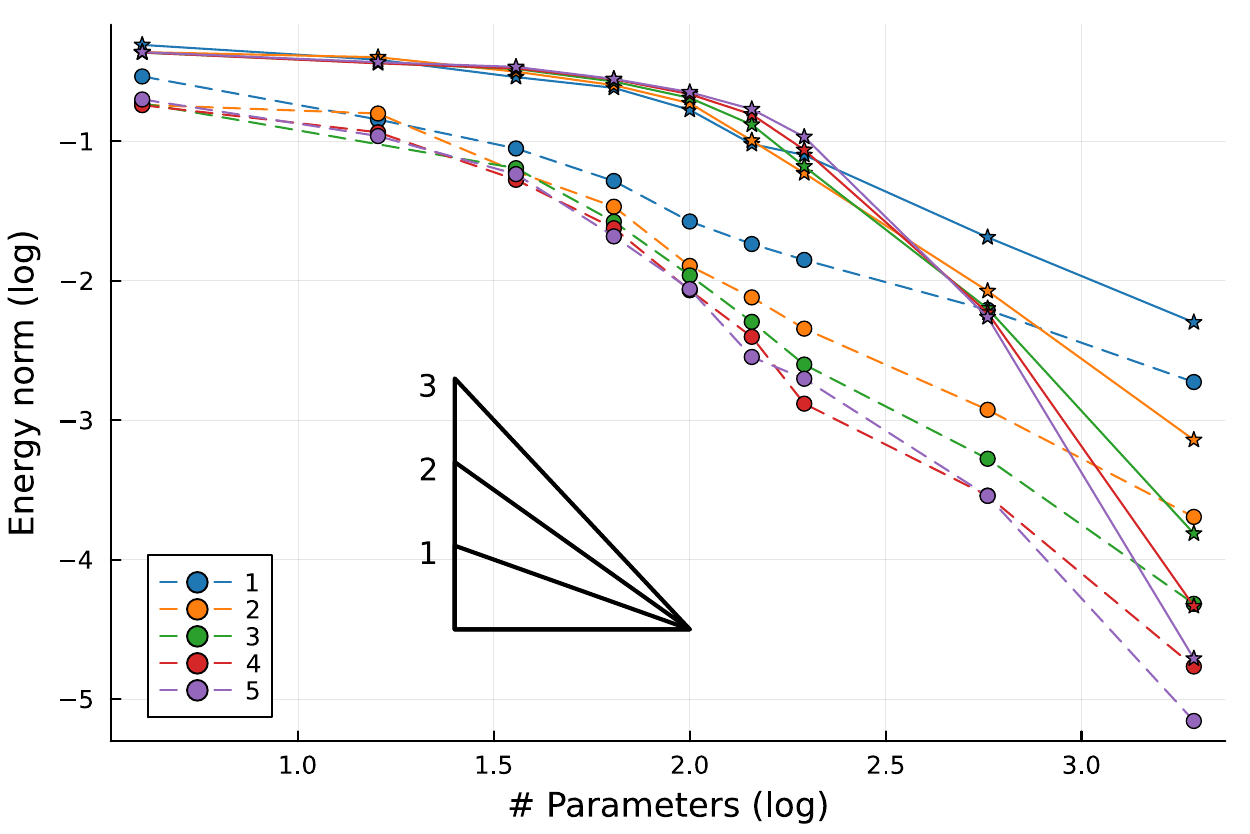}
    }%
    \hfill%
    \subfloat[$9$ patches\label{fig:2-approx-convergence-3}]{
        \includegraphics[width=0.45\linewidth]{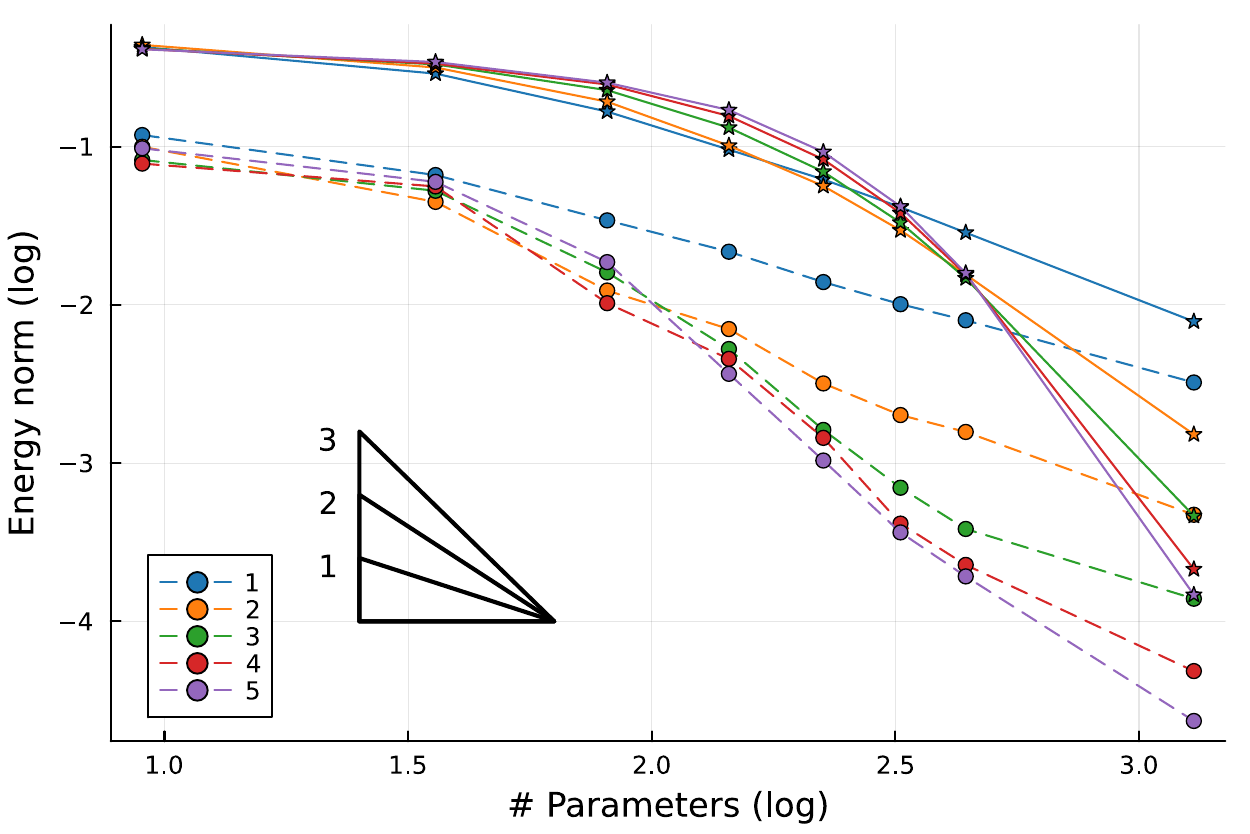}
    }%
    \caption{Convergence plots for the approximation problem in 2D with $1$, $4$ or $9$ B-spline patches of degrees $0$ to $5$, in the energy norm. Solid lines represent uniform meshes while dashed lines correspond to adapted meshes.}
    \label{fig:2-approx-convergence}
\end{figure}

In two dimensions, uniform B-splines of degree $p$ are known to approximate functions in $H^{p+1}$ at a rate of $O(h^{p+1})$, in the $L^{2}$ norm, where $h$ is the mesh size \cite[Section 3]{bazilevs2006isogeometric}. On an $n \times n$ grid, this translates to a rate $O(n^{-(p+1)/2})$ in terms of number of \acp{dof}. The corresponding rates for $p \in \{0, 5\}$ are depicted in each convergence plot.

\fig{2-approx-convergence-1} shows that for a single patch, free-knot B-splines improve accuracy by roughly half to one order of magnitude compared to their uniform counterparts, though the gains diminish as the number of \acp{dof} increases. This is expected: as the mesh becomes finer, it captures most features of the target function, leaving limited room for further improvement via adaptivity.

This behaviour reflects a more general trend: when the number of mesh parameters is too small, the mesh lacks the flexibility to meaningfully adapt, and the resulting improvement is negligible. Conversely, when the number of \acp{dof} is already high, the uniform approximation is sufficiently accurate, and further adaptation yields little benefit. The most significant gains from free-knot B-splines are observed in the intermediate regime, where the mesh is coarse enough that adaptivity matters, but flexible enough to respond to the local features of the target function.

This observation motivated the use of more patches to obtain more substantial gains for moderate numbers of \acp{dof}. \fig{2-approx-convergence-2} and \fig{2-approx-convergence-3} confirm that using multiple patches yields more significant improvements, with gains ranging from one to two orders of magnitude.

To complement the convergence results, \fig{2-approx-comparison} displays the target function, the solution at initialisation in the space composed of $4$ patches of quadratic B-splines on a $7 \times 7$ grid, and the solution after optimisation of the nonlinear parameters. The adapted solution clearly captures the sharp ridge aligned with the curve $\cos(\pi x + 1) = 3y$, which is blurred in the uniform case. This illustrates the core advantage of mesh adaptivity: concentrating resolution where it is most needed to resolve localised features without increasing the global model complexity.

\begin{figure}
    \centering
    \subfloat[\label{fig:2-approx-solution}]{
        \includegraphics[width=0.31\linewidth]{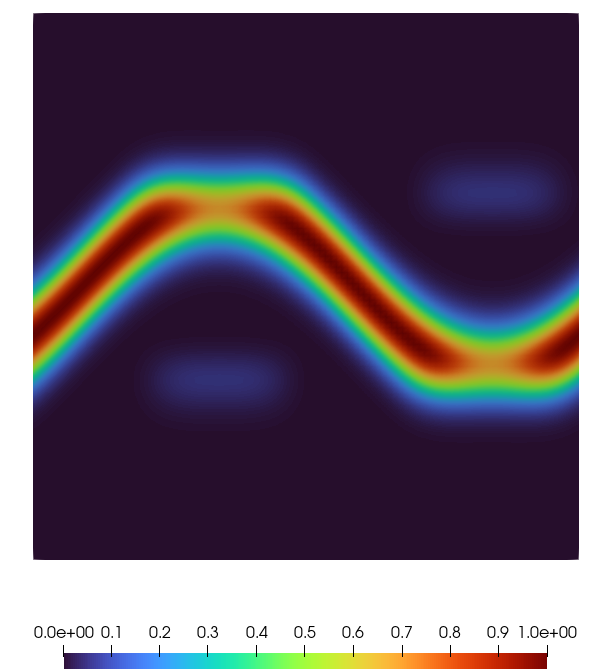}
    }%
    \hfill%
    \subfloat[\label{fig:2-approx-uniform-solution}]{
        \includegraphics[width=0.31\linewidth]{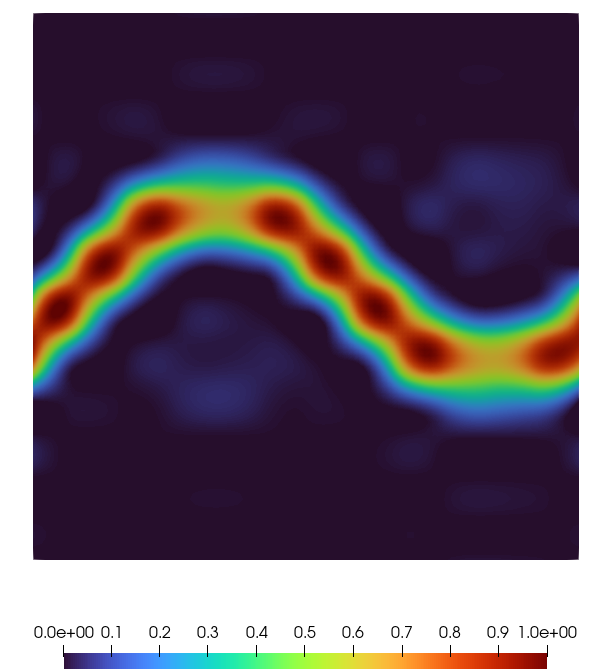}
    }%
    \hfill%
    \subfloat[\label{fig:2-approx-adapted-solution}]{
        \includegraphics[width=0.31\linewidth]{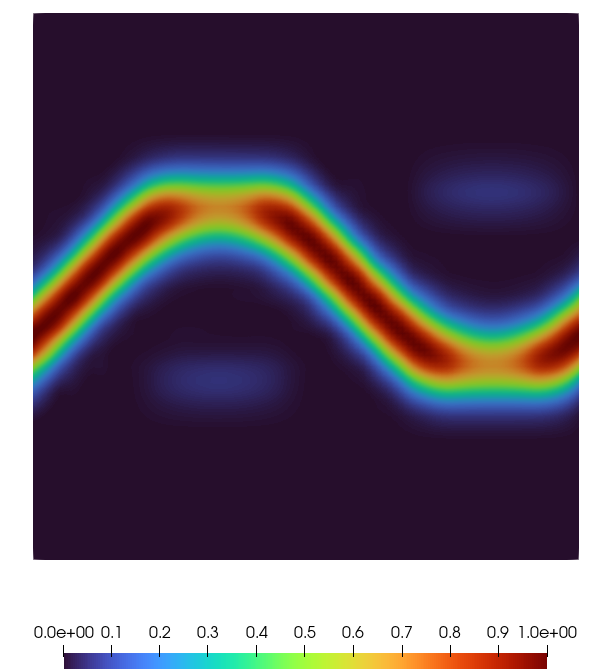}
    }%
    \caption{(\subref{fig:2-approx-solution}) Target solution for the function approximation problem in 2D. (\subref{fig:2-approx-uniform-solution}) Solution the space composed of $4$ patches of quadratic B-splines on $7 \times 7$ grids, at initialisation. (\subref{fig:2-approx-adapted-solution}) Solution in that same space after optimisation of the nonlinear parameters.}
    \label{fig:2-approx-comparison}
\end{figure}

\subsubsection{First Poisson problem}

We solve a Poisson problem on $\cc{-1}{+1}^{2}$ with homogeneous Dirichlet boundary conditions, whose exact solution $u$ is defined as follows
\begin{align*}
    u(x, y)     & = u_{0}(x, y) v(x, y),                          \\
    u_{0}(x, y) & = (1 - x^{2}) (1 - y^{2}),                      \\
    v(x, y)     & = (1 - \tanh(20(x-0.3))) (1 - \tanh(50(y+0.3))) \\
                & + \tanh(50(x+0.7)) (1 - \tanh(20(y-0.6))).
\end{align*}
This function exhibits sharp transitions along the hyperplanes $x = 0.3$, $x = -0.7$, $y = -0.3$ and $y = 0.6$, as well as extended regions where it is nearly zero. These features make it a compelling test case for examining mesh movement and adaptivity. The function is visualised in \fig{2-poisson-tanh-solution}.

\begin{figure}
    \centering
    \subfloat[$1$ patch\label{fig:2-poisson-tanh-convergence-1}]{
        \includegraphics[width=0.45\linewidth]{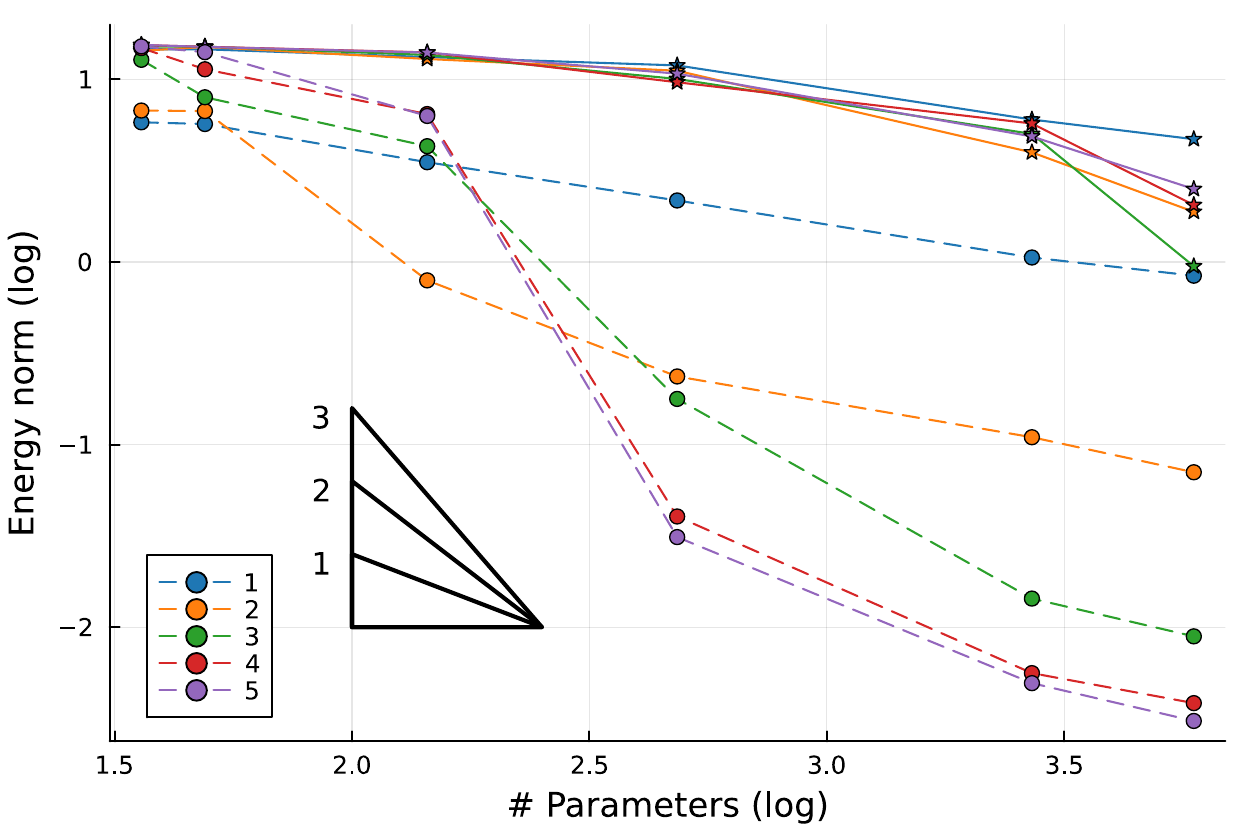}
    }%
    \hfill%
    \subfloat[$4$ patches\label{fig:2-poisson-tanh-convergence-2}]{
        \includegraphics[width=0.45\linewidth]{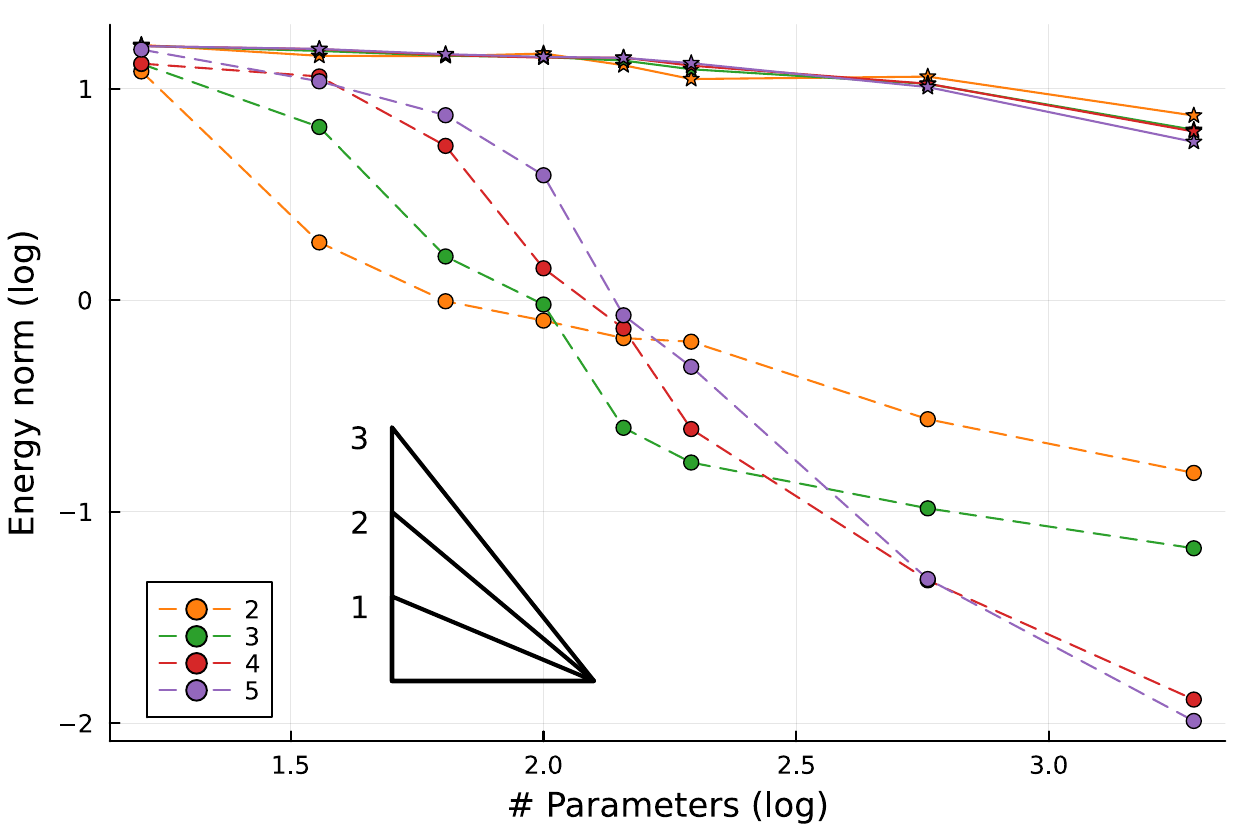}
    }%
    \hfill%
    \subfloat[$9$ patches\label{fig:2-poisson-tanh-convergence-3}]{
        \includegraphics[width=0.45\linewidth]{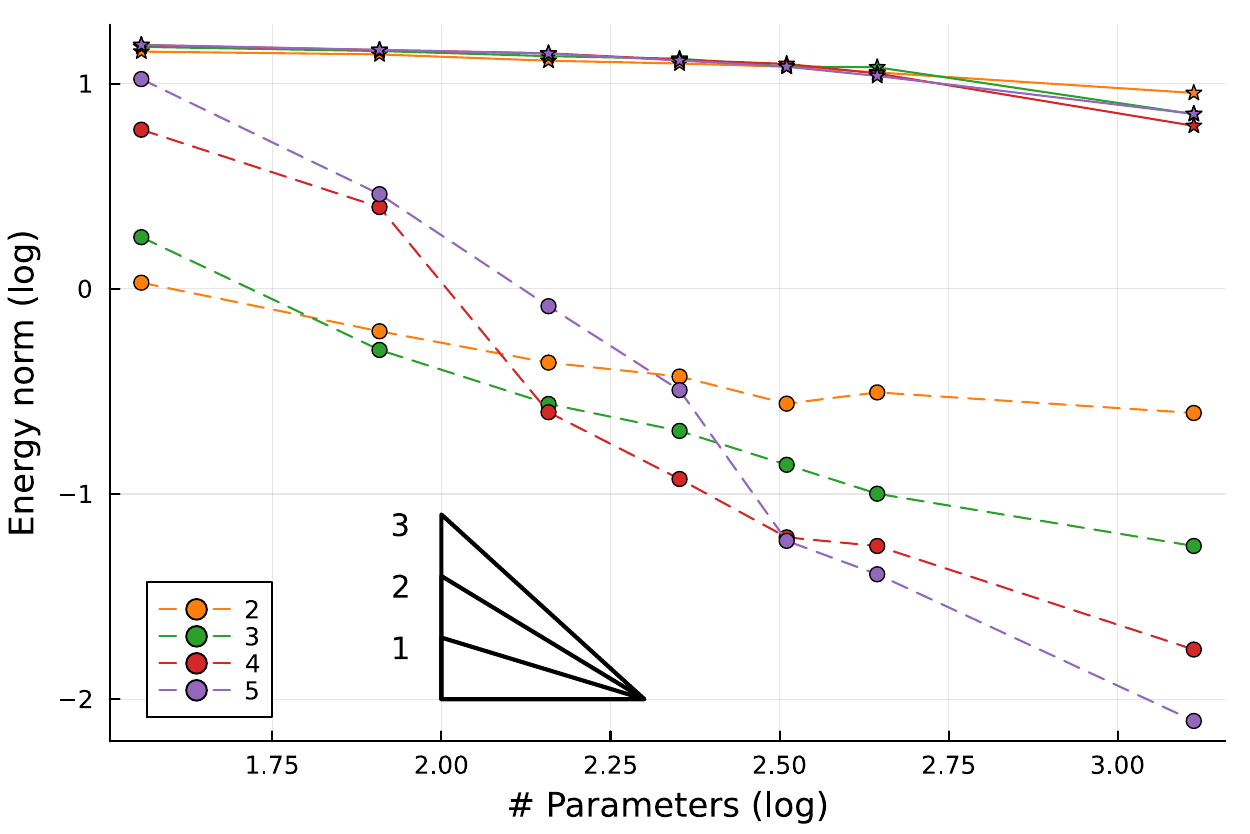}
    }%
    \caption{Convergence plots for the first Poisson problem in 2D with $1$, $4$ or $9$ B-splines patches of degrees $1$ to $5$, in the energy norm. Solid lines represent uniform meshes while coloured circles correspond to adapted meshes.}
    \label{fig:2-poisson-tanh-convergence}
\end{figure}

\fig{2-poisson-tanh-convergence} presents convergence results in the energy norm for B-spline degrees $1$ through $5$, across various patch configurations.  In two dimensions, uniform B-splines of degree $p$ approximate functions in $H^{p+1}$ at a rate of $O(h^{p})$, in the $H^{1}$ norm \cite[Section 3]{bazilevs2006isogeometric}, which corresponds to $O(n^{-p/2})$ on an $n \times n$ grid. These expected rates are shown in the convergence plots for $p \in \{1, 5\}$.

In this example, uniform B-splines struggle to approximate the target function, with the error decreasing very slowly as the mesh is refined. This indicates that, for all degrees, the range of \acp{dof} explored in our experiments remains within the pre-asymptotic regime for uniform meshes.

In contrast, free-knot B-splines achieve substantial error reduction, with improvements ranging from one to three orders of magnitude, even for relatively large numbers of \acp{dof}. The same qualitative behavior observed in earlier examples holds here: with too few free parameters, gains are minimal, while with a moderate number, significant improvements are possible. Based on \fig{2-poisson-tanh-convergence-1}, the transition out of the pre-asymptotic regime for uniform B-splines appears to occur near the upper end of the explored range, around $6000$ \acp{dof}.

\begin{figure}
    \centering
    \subfloat[\label{fig:2-poisson-tanh-solution}]{
        \includegraphics[width=0.31\linewidth]{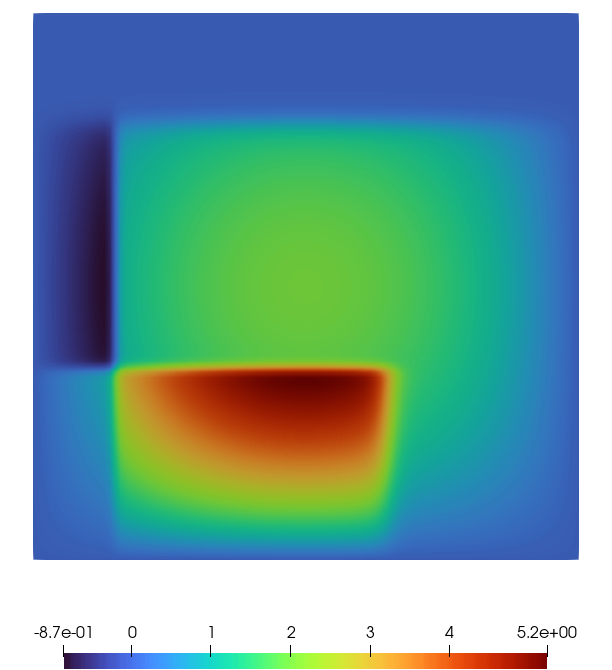}
    }%
    \hspace{0.2\linewidth}%
    \subfloat[\label{fig:2-poisson-tanh-patches}]{
        \includegraphics[width=0.31\linewidth]{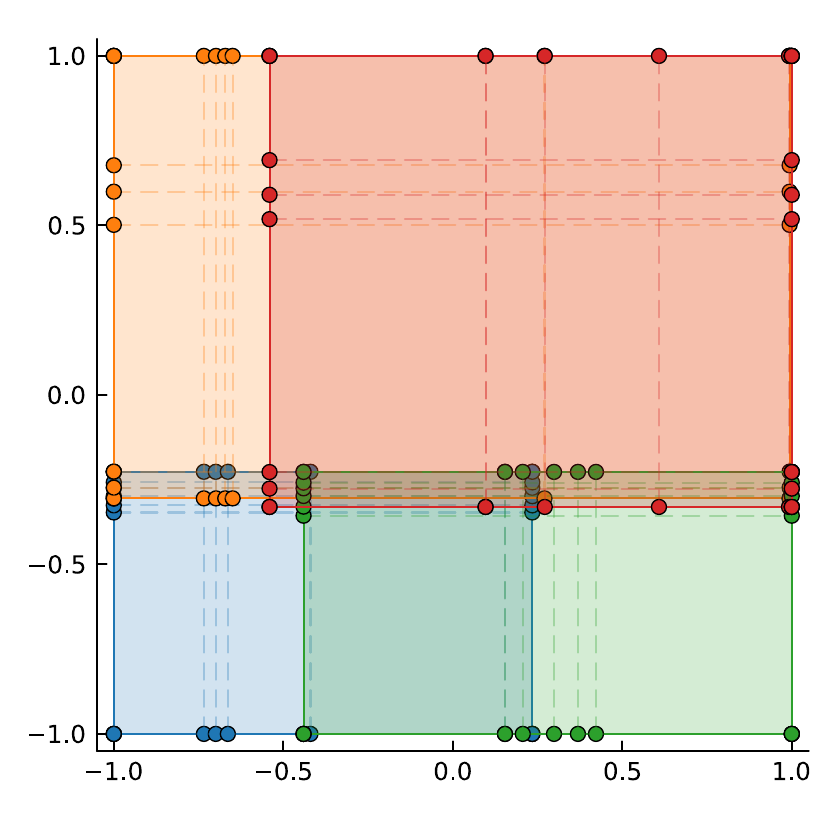}
    }%
    \caption{(\subref{fig:2-poisson-tanh-solution}) Exact solution for the second Poisson problem problem in 2D. (\subref{fig:2-poisson-tanh-patches}) Distribution of the patches after optimisation, for 4 patches of quadratic B-splines of size $6 \times 6$.}
    \label{fig:2-poisson-tanh-comparison}
\end{figure}

To give an example of mesh adaptation in 2D, \fig{2-poisson-tanh-patches} displays the patch layout after optimisation for a configuration with $4$ patches of quadratic B-splines, each defined over a $6 \times 6$ free-knot grid. The mesh deformation highlights the optimiser's ability to concentrate resolution around regions with sharp transitions. Notably, this concentration is achieved either by compressing knots within a single patch (e.g., the blue and orange patches near $x = -0.7$) or by overlapping multiple patches near critical regions (e.g., the green and red patches near $y = -0.3$). The point $(-0.7, 0.3)$, where sharp features intersect, is jointly refined by the blue, orange, and red patches, enabling resolution of the near-singularity in both directions.

\begin{figure}
    \centering
    \subfloat[Uniform mesh\label{fig:2-poisson-tanh-uniform-error}]{
        \includegraphics[width=0.31\linewidth]{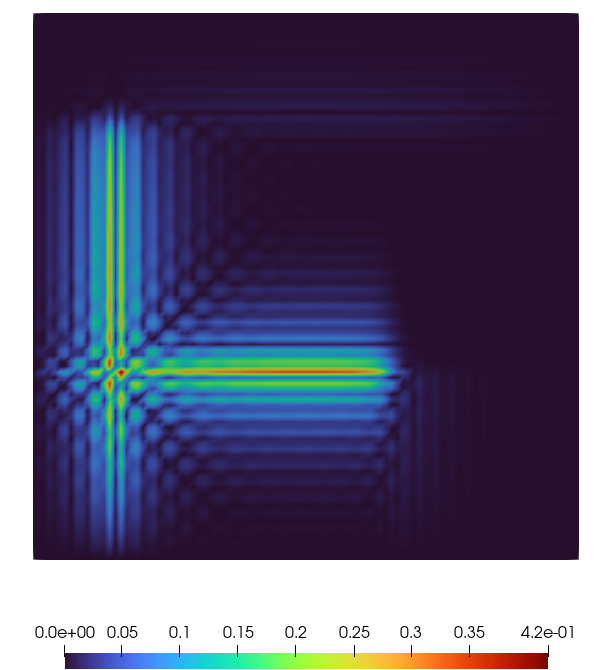}
    }%
    \hspace{0.2\linewidth}%
    \subfloat[Adapted mesh\label{fig:2-poisson-tanh-adapted-error}]{
        \includegraphics[width=0.31\linewidth]{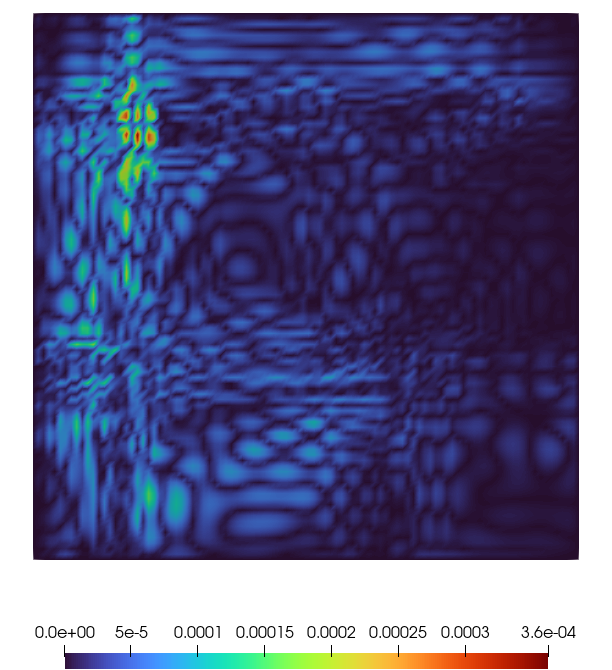}
    }%
    \caption{Pointwise error for the first Poisson problem with $9$ patches of B-splines of order $5$ on $10 \times 10$ grids, with a uniform mesh (\subref{fig:2-poisson-tanh-uniform-error}) or adapted mesh (\subref{fig:2-poisson-tanh-adapted-error}).}
    \label{fig:2-poisson-tanh-error}
\end{figure}

Finally, \fig{2-poisson-tanh-error} shows the pointwise error before and after mesh adaptation for the case with $9$ B-spline patches of order $5$, each defined on a $10 \times 10$ grid. In the uniform case, the error is highly concentrated around a single point, whereas after adaptation, the error is almost evenly distributed across the domain. This illustrates a key mechanism of energy-based optimisation: it implicitly drives the mesh to equidistribute the error, balancing local contributions to the global objective.

This behavior aligns with the principles of mesh adaptation via mesh movement \cite{budd2009adaptivity}, where the mesh is explicitly evolved to equidistribute a monitor function related to the solution error. In their formulation, this leads to solving a Monge-Ampère-type optimal transport problem, which constructs a mesh that equidistributes some error estimate. Although our approach is based on energy minimisation rather than explicitly prescribed mesh movement, the resulting adapted meshes exhibit similar features to those produced in the moving mesh framework. This highlights a strong conceptual link between our optimisation-based method and classical approaches to adaptivity using dynamic grids, both aiming to concentrate resolution where it is most needed, while enforcing geometric constraints to avoid degenerate meshes.

\subsubsection{Second Poisson problem}

We finally consider a Poisson problem on $\Omega = [-1, 1]^2$ with homogeneous Dirichlet boundary conditions, whose source term is chosen so that the exact solution is given by
$$u^{\star}(x, y) = (x^{2} - 1) (y^{2} - 1) \exp(-3 (x + 0.3)^{2} - (y - 0.5)^{2}) \cos(x + y).$$
This function exhibits a sharp peak concentrated around the point $(x, y) = (0.8, 0)$, and a sharp transition from that point to the boundary $x = 1$, making it a demanding test for uniform discretisations.

\begin{figure}
    \centering
    \subfloat[$1$ patch\label{fig:2-poisson-peak-convergence-1}]{
        \includegraphics[width=0.45\linewidth]{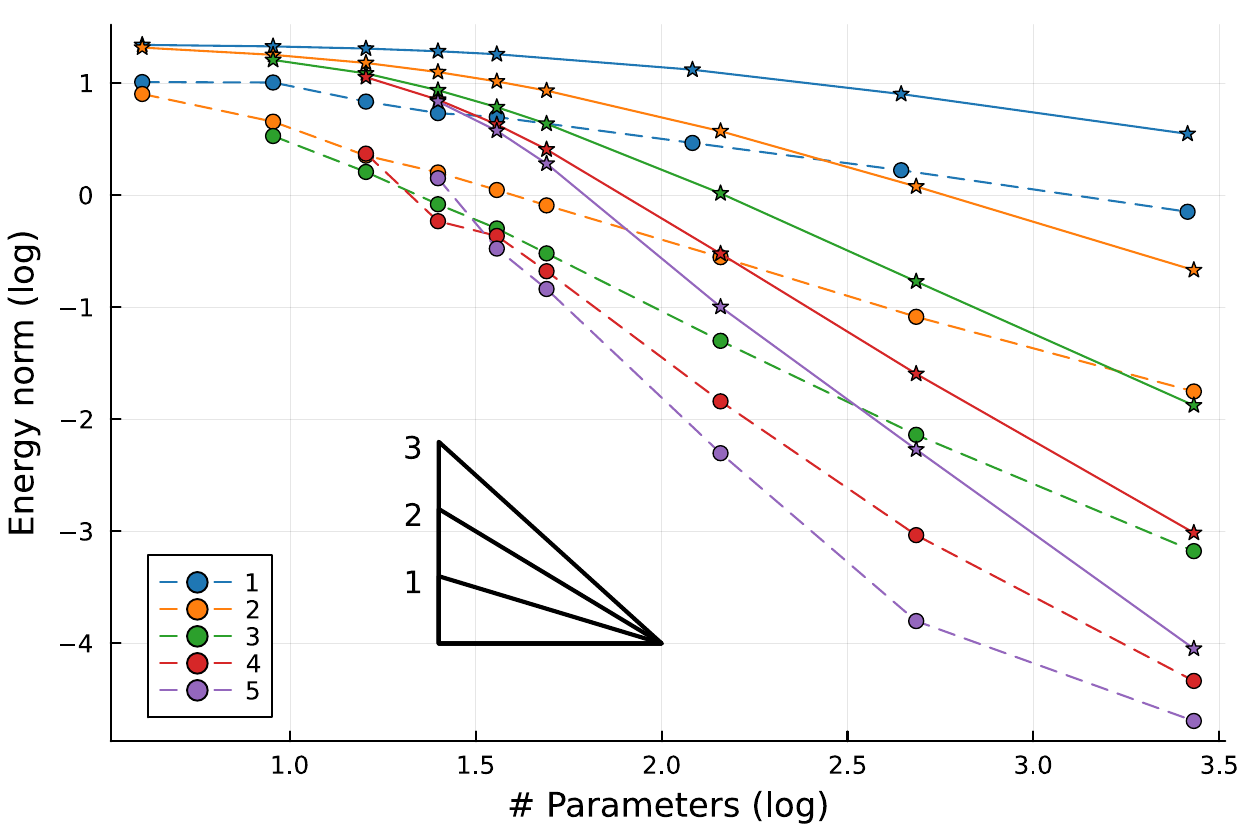}
    }%
    \hfill%
    \subfloat[$4$ patches\label{fig:2-poisson-peak-convergence-2}]{
        \includegraphics[width=0.45\linewidth]{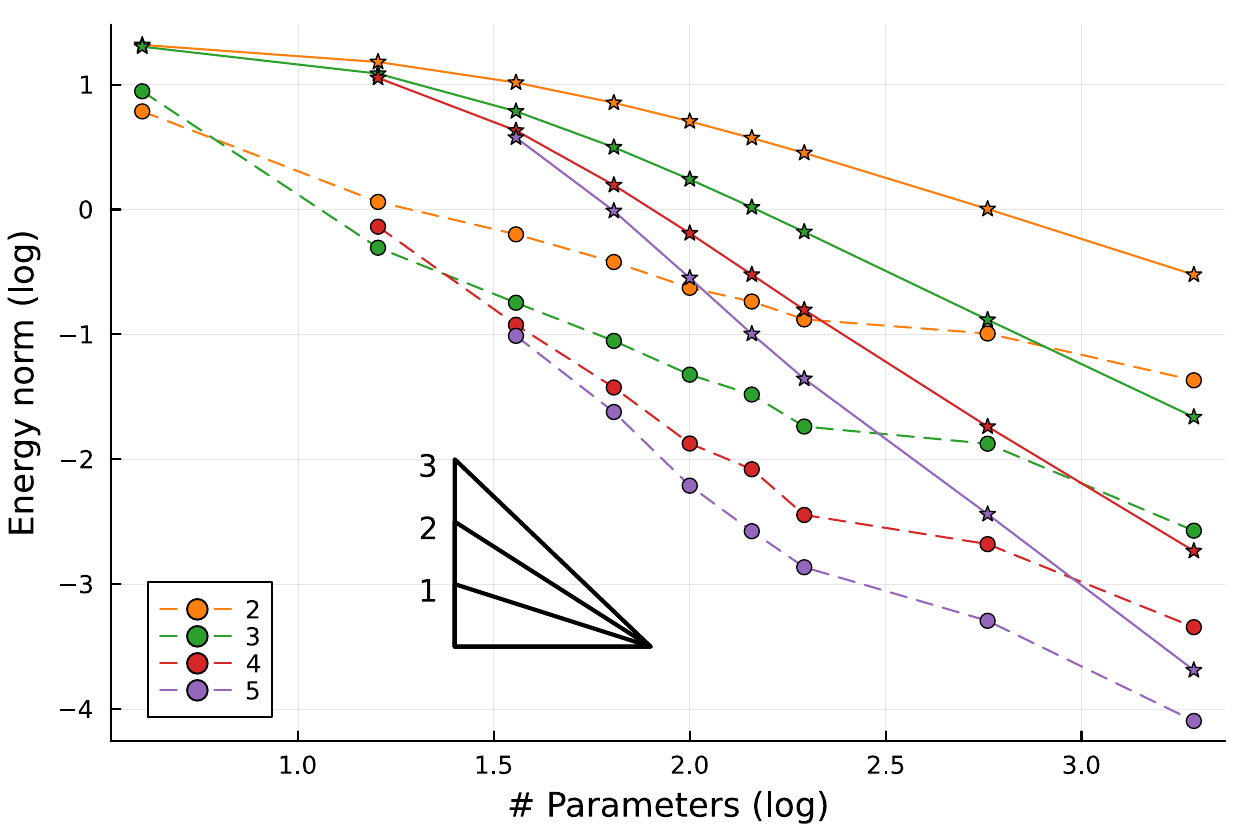}
    }%
    \hfill%
    \subfloat[$9$ patches\label{fig:2-poisson-peak-convergence-3}]{
        \includegraphics[width=0.45\linewidth]{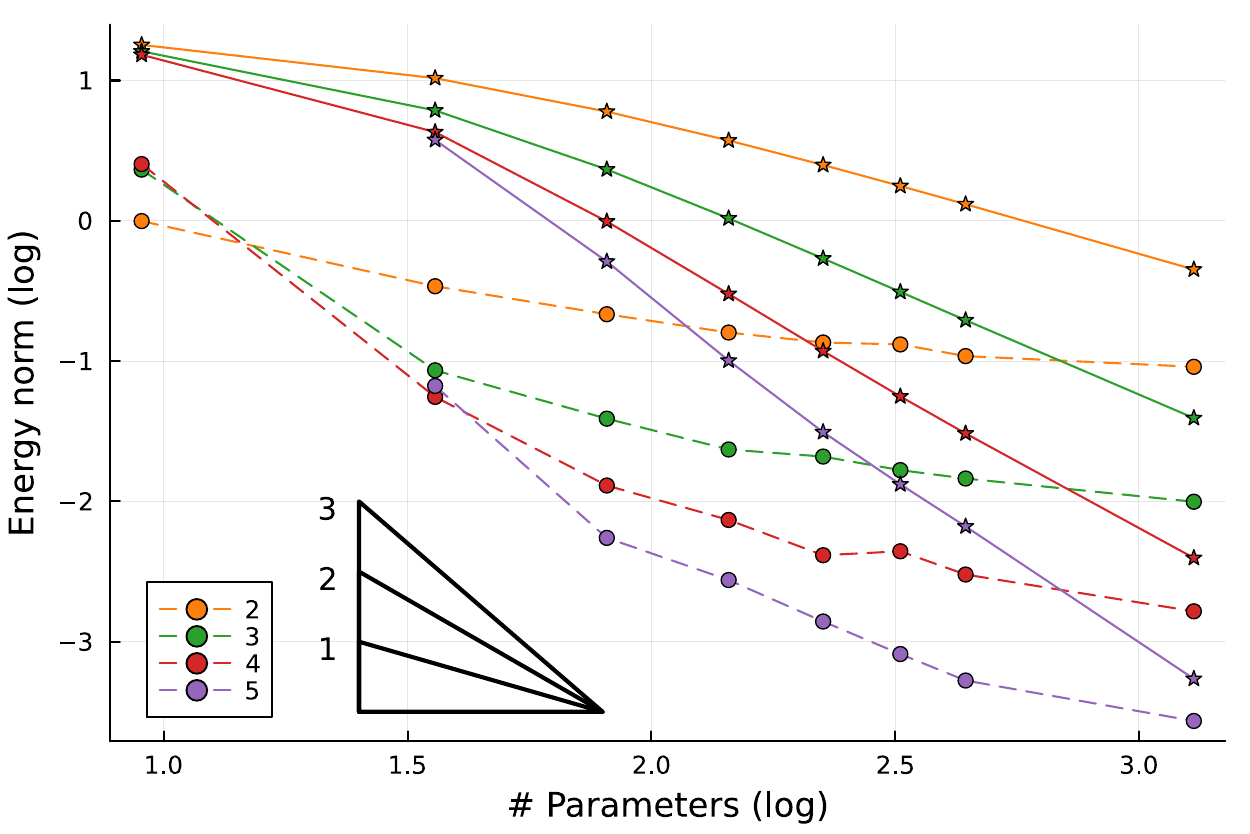}
    }%
    \caption{Convergence plots for the second Poisson problem in 2D with $1$, $4$ or $9$ B-splines patches of degrees $1$ to $5$, in the energy norm. Solid lines represent uniform meshes while coloured circles correspond to adapted meshes.}
    \label{fig:2-poisson-peak-convergence}
\end{figure}

\fig{2-poisson-peak-convergence} shows the convergence of the energy norm of the error with respect to the number of linear \acp{dof}, comparing uniform and adapted meshes across various spline degrees and patch configurations.

In this example, adaptivity yields substantial gains even in the single-patch case, with error reductions of one to two orders of magnitude. The cases with $4$ or $9$ patches clearly demonstrate the core benefit of adaptivity: it is most effective at intermediate resolutions. With too few \acp{dof}, the mesh lacks flexibility; with too many, the uniform mesh already performs well. In between, adaptivity significantly improves accuracy by reallocating resolution to the most critical regions.

\section{Conclusion}
\label{sect:conclusion}
In this work, we design a space of overlapping tensor-product free-knot B-spline patches. By treating knot positions as nonlinear parameters, this construction enables geometric adaptivity while preserving smoothness and avoiding the rigidity of global tensor-product grids. We verify that the associated discrete energy functional satisfies the structural assumptions required by the convergence analysis of our companion paper \cite{companion}, ensuring both local and global convergence of the hybrid optimisation algorithm.

We demonstrate the effectiveness of free-knot B-spline spaces through a suite of numerical experiments. The results show that our approach achieves markedly improved accuracy compared to standard \ac{iga}, using significantly fewer degrees of freedom. The ability to resolve localised solution features with high precision underscores the strength of combining geometric adaptivity with energy-based optimisation. Although our experiments focus on low-dimensional problems, the framework naturally extends to higher dimensions and offers promising scalability with parallel implementations.

While this work focuses on B-spline spaces, other adaptive or free-knot piecewise polynomial spaces, such as Bernstein or Hermite splines, could also be incorporated within the same framework. We expect that the numerical analysis required to verify the abstract convergence assumptions follow similar lines. More broadly, our approach could apply to other nonlinear approximation manifolds, including spaces based on wavelets or compactly supported radial basis functions, as well as sparse grid constructions that retain only selected tensor-product blocks in hierarchical decompositions \cite{bungartz2004sparse}.

Beyond exploring these approximation spaces, we are also interested in developing more efficient optimisation strategies, such as incorporating second-order information or problem-adapted preconditioners, to improve convergence rates and robustness in high-dimensional problems. We also plan to investigate extending our framework to more complex geometries using advanced unfitted techniques \cite{dePrenter2023}. Additional code development will also be required to design efficient implementations of the proposed nonlinear methods and tailored nonlinear solvers to tackle more challenging problems, particularly in high dimensions. One possible direction is to develop matrix-free multigrid solvers, which would enable the use of automatic differentiation tools to compute gradients and Hessians without explicitly forming matrices. We leave these endeavours for future work.

\section*{Acknowledgements}
\label{sect:acknowledgements}
This research was partially funded by the Australian Government through the Australian Research Council (project DP220103160). A. Magueresse gratefully acknowledges the Monash Graduate Scholarship from Monash University.

\clearpage

\appendix

\section{Proofs}
\label{sect:proofs}
\subsection{Common tools for B-splines}
\label{app:toolsbsplines}

We start by recalling some properties of divided differences. Given $r \geq 0$, $y_{1}, \ldots, y_{r+1} \in \RR$ and $f: \RR \to \RR$ sufficiently smooth, the finite difference operator is defined inductively via
\begin{equation}
    \label{eq:divided-difference}
    [y_{1}, \ldots, y_{r+1}] f = \frac{[y_{2}, \ldots, y_{r+1}] f - [y_{1}, \ldots, y_{r}] f}{y_{r+1} - y_{1}}
\end{equation}
provided $y_{1} \neq y_{r}$, and
$$[y_{1}, \ldots, y_{r+1}] f = \frac{D^{r} f(y_{1})}{r!}$$
if $y_{1} = \ldots = y_{r}$, where $D^{r} f$ denotes the $r$-th derivative of $f$.

We now list a two common tools for the proofs of \lem{boundedness} and \lem{holder}. First, it is known \cite[Theorem 4.23]{schumaker2007spline} that $\norm{N_{p}(\mb{\tau})}_{L^{1}(\RR)} = 1 / (p + 1)$ for all $\mb{\tau} \in \md{K}_{p+2}$, which implies $\norm{B_{p}(\mb{\tau})}_{L^{1}(\RR)} = \abs{\mb{\tau}} / (p + 1)$. Since $B_{p}(\mb{\tau})$ takes values in $\cc{0}{1}$, we reach $\norm{B_{p}(\mb{\tau})}_{L^{2}(\RR)}^{2} \leq \abs{\mb{\tau}} / (p + 1)$. Therefore,
\begin{equation}
    \label{eq:bound-bspline-norm}
    C_{p}^{2} \doteq \sup_{\mb{\tau} \in \md{K}_{p+2}} \frac{\norm{B_{p}(\mb{\tau})}_{L^{2}(\RR)}^{2}}{\abs{\mb{\tau}}} \leq \frac{1}{p + 1}.
\end{equation}

Second, note that $\remBeg$ and $\remEnd$ commute. For all $\alpha, \beta \geq 0$, Let then $\remBeg^{\alpha} \remEnd^{\beta}$ denote $\alpha$ applications of $\remBeg$ and $\beta$ applications of $\remEnd$. For all $\mb{\tau} \in \md{K}_{p}$, it holds
\begin{equation}
    \label{eq:bound-knots}
    \abs{\mb{\tau} \remBeg^{\alpha} \remEnd^{\beta}} \geq (p - 1 - \alpha - \beta) h(\mb{\tau}).
\end{equation}
Besides, $\abs{\mb{\tau} \oplus x} \geq \abs{\mb{\tau}}$ for all $x \in \RR$.

\subsection{Proof of \texorpdfstring{\lem{BsplineDerivatives}}{Lemma~\ref{lem:BsplineDerivatives}}}
\label{app:BsplineDerivatives}

\begin{proof}
    Assume $p = 0$ and let $\mb{\tau} = (a, b) \in \md{K}_{2}$. Let $\phi \in C^{\infty}_{c}(\RR)$ and compute
    $$-\int_{\RR} B_{0}(\mb{\tau})(x) \phi'(x) \dd{x} = -\int_{a}^{b} \phi'(x) \dd{x} = \phi(a) - \phi(b).$$
    By definition of the distributional derivative, this shows $\partial_{x} B_{0}(\mb{\tau}) = \delta_{a} - \delta_{b}$, where $\delta_{z}$ is the Dirac delta centred at $z \in \RR$.

    For $p = 1$, $\mb{\tau} = (a, b, c) \in \md{K}_{3}$, and $\phi \in C^{\infty}_{c}(\RR)$, we compute
    \begin{align*}
        -\int_{\RR} B_{1}(\mb{\tau})(x) \phi'(x) \dd{x} & = -\int_{a}^{b} \frac{x - a}{b - a} \phi'(x) \dd{x} - \int_{b}^{c} \frac{c - x}{c - b} \phi'(x) \dd{x}           \\
                                                        & = -\phi(b) + \int_{a}^{b} \frac{1}{b - a} \phi(x) \dd{x} + \phi(b) - \int_{b}^{c} \frac{1}{c - b} \phi(x) \dd{x} \\
                                                        & = -\int_{\RR} (N_{0}(\mb{\tau} \remBeg) - N_{0}(\mb{\tau} \remEnd)) \phi(x) \dd{x}
    \end{align*}
    As above, we conclude that the weak derivative of $B_{1}(\mb{\tau})$ is $\partial_{x} B_{1}(\mb{\tau}) = -1 \cdot (N_{0}(\mb{\tau} \remBeg) - N_{0}(\mb{\tau} \remEnd))$.

    Assume now that $p \geq 2$ and let $\mb{\tau} \in \md{K}_{p+2}$. Let $x \in \RR$ and apply \eqref{eq:divided-difference} to rewrite $B_{p}(\mb{\tau})(x)$ as
    $$B_{p}(\mb{\tau})(x) = [\mb{\tau} \remBeg] (\bullet - x)_{+}^{p} - [\mb{\tau} \remEnd] (\bullet - x)_{+}^{p}.$$
    Note that $x \mapsto (y - x)_{+}^{p}$ is differentiable for all $y \in \RR$ and $\partial_{x} (y - x)_{+}^{p} = -p (y - x)_{+}^{p - 1}$. The partial derivative with respect to $x$ commutes with the finite difference operator and
    \begin{align*}
        \partial_{x} B_{p}(\mb{\tau})(x) & = [\mb{\tau} \remBeg] \partial_{x} (\bullet - x)_{+}^{p} - [\mb{\tau} \remEnd] \partial_{x} (\bullet - x)_{+}^{p} \\
                                         & = - p [\mb{\tau} \remBeg] (\bullet - x)_{+}^{p-1} + p [\mb{\tau} \remEnd] (\bullet - x)_{+}^{p-1}                 \\
                                         & = - p (N_{p-1}(\mb{\tau} \remBeg)(x) - N_{p-1}(\mb{\tau} \remEnd)(x)).
    \end{align*}

    We turn to the parametric derivatives of the B-spline functor. For $p = 0$, given $\mb{\tau} = (a, b) \in \md{K}_{2}$, $\epsilon \in \oo{0}{b - a}$ so that $(a + \epsilon, b) \in \md{K}_{2}$, and $\phi \in H^{1}(\RR)$, we compute
    $$\abs{\inner{\phi}{\frac{B_{0}(a + \epsilon, b) - B_{0}(a, b)}{\epsilon} + \delta_{a}}_{H^{-1}(\RR)}} = \abs{\phi(a) - \frac{1}{\epsilon} \int_{a}^{a + \epsilon} \phi(t) \dd{t}} \leq \frac{1}{\epsilon} \int_{a}^{a + \epsilon} \abs{\phi(t) - \phi(a)} \dd{t},$$
    which tends to $0$ as $\epsilon \to 0$ since $\phi$ is continuous at $a$. This shows $B_{0}$ is differentiable with respect to its first input as a map from $\md{K}_{2}$ to $H^{-1}(\RR)$, and $\partial_{1} B_{0}(\mb{\tau}) = -\delta_{\mb{\tau}_{1}}$. A similar reasoning applies to the differentiability with respect to the second variable and yields $\partial_{2} B_{0}(\mb{\tau}) = \delta_{\mb{\tau}_{2}}$.

    We now assume $p \geq 1$. Let $i \in \rg{1}{p+2}$ and $\epsilon \in \RR$ be such that $\mb{\tau}_{i-1} < \mb{\tau}_{i} + \epsilon < \mb{\tau}_{i+1}$, with the convention $\mb{\tau}_{0} = -\infty$ and $\mb{\tau}_{p+3} = + \infty$. Consider then the knot vector $\mb{\tau}_{\epsilon} = \mb{\tau} + \epsilon \mb{\delta}_{i}$, where $(\mb{\delta}_{i})_{j} = \delta_{ij}$. The choice of $\epsilon$ ensures $\mb{\tau}_{\epsilon} \in \md{K}_{p+2}$. Let $x \in \RR$ and apply \eqref{eq:divided-difference} to rewrite
    \begin{align*}
        \frac{1}{\epsilon} (B_{p}(\mb{\tau}_{\epsilon})(x) - B_{p}(\mb{\tau})(x)) & = \frac{1}{\epsilon} ([\mb{\tau}_{\epsilon} \remBeg] (\bullet - x)_{+}^{p} - [\mb{\tau} \remBeg] (\bullet - x)_{+}^{p}) - \frac{1}{\epsilon} ([\mb{\tau}_{\epsilon} \remEnd] (\bullet - x)_{+}^{p} - [\mb{\tau} \remEnd] (\bullet - x)_{+}^{p}) \\
                                                                                  & = [\mb{\tau} \oplus (\mb{\tau}_{i} + \epsilon) \remBeg] (\bullet - x)_{+}^{p} \delta_{i \neq 1} - [\mb{\tau} \oplus (\mb{\tau}_{i} + \epsilon) \remEnd] (\bullet - x)_{+}^{p} \delta_{i \neq p + 2}.
    \end{align*}
    The restrictions on $i$ come from the fact that if $i$ is no longer in the abscissa of the divided difference operator, then the corresponding term does not depend on $\mb{\tau}_{i}$. These restrictions allow $\remBeg$ and $\remEnd$ to commute with $\oplus$: $(\mb{\tau} \remBeg) \oplus (\mb{\tau}_{i} + \epsilon) = (\mb{\tau} \oplus (\mb{\tau}_{i} + \epsilon)) \remBeg$, and similarly with $\remEnd$. Since $x \mapsto (y - x)_{+}^{p}$ is continuous for all $y \in \RR$, the divided difference operator is continuous with respect to its abscissa. Taking the limit as $\epsilon \to 0$, we infer that $B_{p}(\mb{\tau})(x)$ is differentiable with respect to $\mb{\tau}_{i}$ and
    \begin{align}
        \label{eq:derivative-BSpline}
        \partial_{i} B_{p}(\mb{\tau})(x) & = [\mb{\tau} \oplus \mb{\tau}_{i} \remBeg] (\bullet - x)_{+}^{p} \delta_{i \neq 1} - [\mb{\tau} \oplus \mb{\tau}_{i} \remEnd] (\bullet - x)_{+}^{p} \delta_{i \neq p + 2} \nonumber \\
                                         & = N_{p}(\mb{\tau} \oplus \mb{\tau}_{i} \remBeg)(x) \delta_{i \neq 1} - N_{p}(\mb{\tau} \oplus \mb{\tau}_{i} \remEnd)(x) \delta_{i \neq p + 2}.
    \end{align}
    This shows the pointwise differentiability of $B_{p}$. To show that $B_{p}$ is differentiable as a map from $\md{K}_{p+2}$ to $H^{p-1}(\RR)$, we need to show that
    $$\norm{\partial_{x}^{k} \left(\frac{B_{p}(\mb{\tau} + \epsilon \mb{\delta}_{i}) - B_{p}(\mb{\tau})}{\epsilon} - \partial_{i} B_{p}(\mb{\tau})\right)}_{L^{2}(\RR)}$$
    goes to $0$ as $\epsilon \to 0$ for all $k \in \rg{0}{p-1}$. From the expression of $\partial_{i} B_{p}$ above, we infer that $\partial_{i} B_{p}$ belongs to $H^{p-1}(\RR)$ owing to the double multiplicity of $\mb{\tau}_{i}$. Observe then that for all $\epsilon > 0$, the integrand of the norm above belongs to $H^{p-1}(\RR)$ as a linear combination of elements of $H^{p-1}(\RR)$. The integrand also has compact support (say $\cc{\mb{\tau}_{1} - 1}{\mb{\tau}_{p+2} + 1}$ when $\abs{\epsilon} \leq 1$) and tends to $0$ pointwise by definition of $\partial_{i} B_{p}$. The dominated convergence theorem ensures that the norms above tend to $0$ as $\epsilon \to 0$. This in turn shows $B_{p}$ is differentiable in the $H^{p-1}(\RR)$-norm.

    Finally, the formula \eqref{eq:derivative-BSpline} still holds for $p = 0$ since, by the properties of divided differences, $N_{0}(a, a) = H'(a) = \delta_{a}$ for all $a \in \RR$, where $H$ is the Heaviside function.
\end{proof}

\subsection{Proof of \texorpdfstring{\lem{derivativeLinearForm}}{Lemma~\ref{lem:derivativeLinearForm}}}
\label{app:derivativeLinearForm}

\begin{proof}
    Let $p \geq 0$ and $f \in H^{-(p-1)}(I)$. Observe that $\partial_{i} B_{p}(\mb{\tau}) \in H^{p-1}(I)$ for all $\mb{\tau} \in \md{K}_{p+2}$, so $\ell_{f}(B_{p}(\mb{\tau}))$ and $\ell_{f}(\partial_{i} B_{p}(\mb{\tau}))$ make sense. Let $\epsilon \in \RR$ such that $\mb{\tau}_{\epsilon} \doteq \mb{\tau} + \epsilon \mb{\delta}_{i} \in \md{K}_{p+2}$. We obtain the bound
    $$\abs{\frac{\ell_{f}(B_{p}(\mb{\tau}_{\epsilon})) - \ell_{f}(B_{p}(\mb{\tau}))}{\epsilon} - \ell_{f}(\partial_{i} B_{p}(\mb{\tau}))} \leq \norm{f}_{H^{-(p-1)}(I)} \norm{\frac{B_{p}(\mb{\tau}_{\epsilon}) - B_{p}(\mb{\tau})}{\epsilon} - \partial_{i} B_{p}(\mb{\tau})}_{H^{p-1}(I)}.$$
    By \lem{BsplineDerivatives}, the differentiability of $B_{p}$ with respect to the $i$-th knot in the $H^{p-1}(I)$ topology implies the right hand-side goes to $0$ as $\epsilon \to 0$. This shows $\ell_{f} \circ B_{p}$ is differentiable with respect to the $i$-th knot and $\partial_{i} \ell_{f}(B_{p}(\mb{\tau})) = \ell_{f}(\partial_{i} B_{p}(\mb{\tau}))$.
\end{proof}

\subsection{Proof of \texorpdfstring{\lem{derivativeBilinearForm}}{Lemma~\ref{lem:derivativeBilinearForm}}}
\label{app:derivativeBilinearForm}

\begin{proof}
    Observe that $\partial_{x}^{k} B_{p}(\mb{\tau}) \in H^{p-k}(\RR)$ and $\partial_{i} \partial_{x}^{k} B_{p}(\mb{\tau}) \in H^{p-k-1}(\RR)$ for all $\mb{\tau} \in \md{K}_{p+2}$ and $\partial_{x}^{k} B_{q}(\mb{\sigma}) \in H^{q-k}(\RR)$ for all $\mb{\sigma} \in \md{K}_{q+2}$.

        {\bfseries Case 1: $r \geq 1$; $p \geq k + 2$, $q \geq k + 1$.}
    By a classical Sobolev embedding theorem, \cite[Theorem 4.39]{adams2003sobolev}, it follows that $(\partial_{x}^{k} B_{p}(\mb{\tau})) (\partial_{x}^{k} B_{q}(\mb{\tau})) \in H^{s}(I)$ provided $s \doteq \min(p-k, q-k) > 1/2$; that is, $p \geq k+1$ and $q \geq k + 1$. Similarly $(\partial_{i} \partial_{x}^{k} B_{p}(\mb{\tau})) (\partial_{x}^{k} B_{q}(\mb{\sigma})) \in H^{r}(I)$ provided $r \doteq \min(p-k-1, q-k) > 1/2$; that is, $p \geq k + 2$ and $q \geq k + 1$. Given $f \in H^{-r}(I)$, these properties ensure $a_{k, f}(B_{p}(\mb{\tau}), B_{q}(\mb{\sigma}))$ and $a_{k, f}(\partial_{i} B_{p}(\mb{\tau}), B_{q}(\mb{\sigma}))$ make sense.

    Let $\epsilon \in \RR$ such that $\mb{\tau}_{\epsilon} \doteq \mb{\tau} + \epsilon \mb{\delta}_{i} \in \md{K}_{p+2}$. We bound
    \begin{align*}
         & \abs{\frac{a_{k, f}(B_{p}(\mb{\tau}_{\epsilon}), B_{q}(\mb{\sigma})) - a_{k, f}(B_{p}(\mb{\tau}), B_{q}(\mb{\sigma}))}{\epsilon} - a_{k, f}(\partial_{i} B_{p}(\mb{\tau}), B_{q}(\mb{\sigma}))}                                        \\
         & \leq \norm{f}_{H^{-r}(I)} \norm{\partial_{x}^{k} \left(\frac{B_{p}(\mb{\tau}_{\epsilon}) - B_{p}(\mb{\tau})}{\epsilon} - \partial_{i} B_{p}(\mb{\tau})\right) \partial_{x}^{k} B_{q}(\mb{\sigma})}_{H^{r}(I)}                          \\
         & \leq K \norm{f}_{H^{-r}(I)} \norm{\partial_{x}^{k} \left(\frac{B_{p}(\mb{\tau}_{\epsilon}) - B_{p}(\mb{\tau})}{\epsilon} - \partial_{i} B_{p}(\mb{\tau})\right)}_{H^{r}(\RR)} \norm{\partial_{x}^{k} B_{q}(\mb{\sigma})}_{H^{r}(\RR)}, \\
         & \leq K \norm{f}_{H^{-r}(I)} \norm{\frac{B_{p}(\mb{\tau}_{\epsilon}) - B_{p}(\mb{\tau})}{\epsilon} - \partial_{i} B_{p}(\mb{\tau})}_{H^{p-1}(\RR)} \norm{\partial_{x}^{k} B_{q}(\mb{\sigma})}_{H^{q}(\RR)},
    \end{align*}
    where the third inequality follows from \cite[Theorem 4.39]{adams2003sobolev}, and the constant $K$ depends on $p$, $q$ and $I$. The fourth inequality follows from the definition of $r$ and the inequality $\norm{\partial_{x}^{k} u}_{H^{p}(I)} \leq \norm{u}_{H^{p+k}(I)}$ for all $u \in H^{p+k}(I)$. The final upper bound tends to $0$ by the differentiability of $B_{p}(\mb{\tau})$ with respect to the $i$-th knot in the $H^{p-1}(I)$ topology (\lem{BsplineDerivatives}).

        {\bfseries Case 2: $r = 0$; $p \geq k + 1$, $q = k$ or $p = k+1$, $q \geq k$}
    Observe that $\partial_{x}^{k} B_{q}(\mb{\sigma})$ is piecewise continuous and bounded pointwise. Since $\partial_{x}^{k} B_{p}(\mb{\tau}) \in L^{2}(\RR)$, we infer that $(\partial_{x}^{k} B_{p}(\mb{\tau})) (\partial_{x}^{k} B_{q}(\mb{\tau})) \in L^{2}(\RR)$. Similarly, $\partial_{i} \partial_{x}^{k} B_{p}(\mb{\tau}) \in L^{2}(\RR)$ so $(\partial_{i} \partial_{x}^{k} B_{p}(\mb{\tau})) (\partial_{x}^{k} B_{q}(\mb{\tau})) \in L^{2}(\RR)$. Provided $f \in L^{2}(I)$, this ensures $a_{k, f}(B_{p}(\mb{\tau}), B_{q}(\mb{\sigma}))$ and $a_{k, f}(\partial_{i} B_{p}(\mb{\tau}), B_{q}(\mb{\sigma}))$ make sense. Here again, we bound
    \begin{align*}
         & \abs{\frac{a_{k, f}(B_{p}(\mb{\tau}_{\epsilon}), B_{q}(\mb{\sigma})) - a_{k, f}(B_{p}(\mb{\tau}), B_{q}(\mb{\sigma}))}{\epsilon} - a_{k, f}(\partial_{i} B_{p}(\mb{\tau}), B_{q}(\mb{\sigma}))}              \\
         & \leq \norm{f}_{L^{2}(I)} \norm{\partial_{x}^{k} \left(\frac{B_{p}(\mb{\tau}_{\epsilon}) - B_{p}(\mb{\tau})}{\epsilon} - \partial_{i} B_{p}(\mb{\tau})\right) \partial_{x}^{k} B_{q}(\mb{\sigma})}_{L^{2}(I)} \\
         & \leq \norm{f}_{L^{2}(I)} \norm{\frac{B_{p}(\mb{\tau}_{\epsilon}) - B_{p}(\mb{\tau})}{\epsilon} - \partial_{i} B_{p}(\mb{\tau})}_{H^{p-1}(\RR)} \norm{\partial_{x}^{k} B_{q}(\mb{\sigma})}_{L^{\infty}(\RR)}.
    \end{align*}
    The last inequality comes from $B_{q}(\mb{\sigma})$ being bounded and $k \leq p - 1$. The right hand-side tends to $0$ as $\epsilon \to 0$ by the differentiability of $B_{p}(\mb{\tau})$ in the $H^{p-1}(I)$ topology (\lem{BsplineDerivatives}).

        {\bfseries Case 3: $r = -1$; $p = k$, $q \geq k + 1$.}
    Observe that $\partial_{x}^{k} B_{p}(\mb{\tau})$ is piecewise constant and bounded, so it is enough to consider the case where $u$ is the indicator function of the segment $\oo{c}{d}$, for any $c < d \in \RR$. In that case,
    $$a_{k, f}(u, B_{q}(\mb{\sigma})) = \int_{I \cap \oo{c}{d}} f \partial_{x}^{k} B_{q}(\mb{\sigma}) \dd{x}.$$
    Owing to the assumption $q \geq k + 1$, we have $\partial_{x}^{k} B_{q}(\mb{\sigma}) \in C^{0}(\RR)$. Similarly, in that case $r = -1$ so $f \in H^{1}(I) \subset C^{0}(I)$. This implies $f \partial_{x} B_{q}(\mb{\sigma}) \in C^{0}(I)$. Therefore $a_{k, f}(u, B_{q}(\mb{\sigma}))$ is differentiable with respect to $c$ and $d$. By the Leibniz integral rule, it follows that
    $$\partial_{c} a_{k, f}(u, B_{q}(\mb{\sigma})) = -f(c) \partial_{x}^{k} B_{q}(\mb{\sigma})(c) \delta_{c \in \overline{I}} = \int_{I} f (\partial_{c} u) (\partial_{x}^{k} B_{q}(\mb{\sigma})) \dd{x},$$
    and similarly for the derivative with respect to $d$. The last integral is understood as a duality pairing between $\partial_{c} u$ and $f \partial_{x}^{k} B_{q}(\mb{\sigma})$. By linearity, we infer that $a_{k, f}(B_{p}(\mb{\tau}), B_{q}(\mb{\sigma}))$ is differentiable with respect to the $i$-th knot and $\partial_{i} a_{k, f}(B_{p}(\mb{\tau}), B_{q}(\mb{\sigma})) = a_{k, f}(\partial_{i} B_{p}(\mb{\tau}), B_{q}(\mb{\sigma}))$.
\end{proof}

\subsection{Proof of \texorpdfstring{\lem{boundedness}}{Lemma~\ref{lem:boundedness}}}
\label{app:boundedness}

\begin{proof}
    {\bfseries Part 1: Bound for $B_{p}$.}
    Let $p \geq 0$ and $\mb{\tau} \in \md{K}_{p+2}$. The bound for the $L^{2}$ norm of a B-spline \eqref{eq:bound-bspline-norm} readily provides
    $$\norm{B_{p}(\mb{\tau})}_{L^{2}(\RR)}^{2} \leq C_{p}^{2} \abs{\mb{\tau}}.$$

    {\bfseries Part 2: Bound for $\partial_{x} B_{p}$.}
    For all $p \geq 1$, the recursion for $\partial_{x} B_{p}$ and the non-negativity of $N_{p}$ yield
    \begin{align*}
        \norm{\partial_{x} B_{p}(\mb{\tau})}_{L^{2}(\RR)}^{2} & \leq p^{2} (\norm{N_{p-1}(\mb{\tau} \remBeg)}_{L^{2}(\RR)}^{2} + \norm{N_{p-1}(\mb{\tau} \remEnd)}_{L^{2}(\RR)}^{2}) \\
                                                              & \leq p^{2} C_{p-1}^{2} (\abs{\mb{\tau} \remBeg}^{-1} + \abs{\mb{\tau} \remEnd}^{-1})                                 \\
                                                              & \leq 2 p C_{p-1}^{2} h(\mb{\tau})^{-1},
    \end{align*}
    where we used \eqref{eq:bound-knots} in the last inequality.

        {\bfseries Part 3: Bound for $\partial_{i} B_{p}$.}
    If $p \geq 1$, the recursion for the derivative of $B_{p}(\mb{\tau})$ with respect to the $i$-th knot provides
    \begin{align*}
        \norm{\partial_{i} B_{p}(\mb{\tau})}_{L^{2}(\RR)}^{2} & \leq \norm{N_{p}(\mb{\tau} \oplus \mb{\tau}_{i} \remBeg)}_{L^{2}(\RR)}^{2} \delta_{i \neq 1} + \norm{N_{p}(\mb{\tau} \oplus \mb{\tau}_{i} \remEnd)}_{L^{2}(\RR)}^{2} \delta_{i \neq p + 2} \\
                                                              & \leq C_{p}^{2} (\abs{\mb{\tau} \oplus \mb{\tau}_{i} \remBeg}^{-1} + \abs{\mb{\tau} \oplus \mb{\tau}_{i} \remEnd}^{-1})                                                                     \\
                                                              & \leq \frac{2 C_{p}^{2}}{p} h(\mb{\tau})^{-1},
    \end{align*}
    where we used the non-negativity of $N_{p}$ in the first inequality, the bound on the $L^{2}$ norm of $N_{p}$ \eqref{eq:bound-bspline-norm} for the second inequality, and the bound on the minimum mesh size \eqref{eq:bound-knots} for the third inequality, noting that duplicating a knot does not change the knot span.

        {\bfseries Part 4: Bound for $\partial_{i} \partial_{x} B_{p}$.}
    For all $p \geq 2$ and $i \in \rg{1}{p+2}$, the recursions for $\partial_{i} B_{p}$ and $\partial_{x} B_{p}$ provide
    \begin{align*}
        \partial_{i} \partial_{x} B_{p}(\mb{\tau}) & = \frac{-p}{\abs{\mb{\tau} \oplus \mb{\tau}_{i} \remBeg}} (N_{p-1}(\mb{\tau} \oplus \mb{\tau}_{i} \remBeg \remBeg) - N_{p-1}(\mb{\tau} \oplus \mb{\tau}_{i} \remBeg \remEnd)) \delta_{i \neq 1}   \\
                                                   & + \frac{p}{\abs{\mb{\tau} \oplus \mb{\tau}_{i} \remEnd}} (N_{p-1}(\mb{\tau} \oplus \mb{\tau}_{i} \remEnd \remBeg) - N_{p-1}(\mb{\tau} \oplus \mb{\tau}_{i} \remEnd \remEnd)) \delta_{i \neq p+2}.
    \end{align*}
    Using the triangle inequality and the non-negativity of $N_{p}$, we obtain
    \begin{align*}
        \norm{\partial_{i} \partial_{x} B_{p}(\mb{\tau})}_{L^{2}(\RR)} & \leq \frac{p}{\abs{\mb{\tau} \oplus \mb{\tau}_{i} \remBeg}} (\norm{N_{p-1}(\mb{\tau} \oplus \mb{\tau}_{i} \remBeg \remBeg)}_{L^{2}(\RR)}^{2} + \norm{N_{p-1}(\mb{\tau} \oplus \mb{\tau}_{i} \remBeg \remEnd)}_{L^{2}(\RR)}^{2})^{1/2} \delta_{i \neq 1} \\
                                                                       & + \frac{p}{\abs{\mb{\tau} \oplus \mb{\tau}_{i} \remEnd}} (\norm{N_{p-1}(\mb{\tau} \oplus \mb{\tau}_{i} \remEnd \remBeg)}_{L^{2}(\RR)}^{2} + \norm{N_{p-1}(\mb{\tau} \oplus \mb{\tau}_{i} \remEnd \remEnd)}_{L^{2}(\RR)}^{2})^{1/2} \delta_{i \neq p+2}  \\
                                                                       & \leq \frac{p}{\abs{\mb{\tau} \oplus \mb{\tau}_{i} \remBeg}} C_{p-1} (\abs{\mb{\tau} \oplus \mb{\tau}_{i} \remBeg \remBeg}^{-1} + \abs{\mb{\tau} \oplus \mb{\tau}_{i} \remBeg \remEnd}^{-1})^{1/2} \delta_{i \neq 1}                                     \\
                                                                       & + \frac{p}{\abs{\mb{\tau} \oplus \mb{\tau}_{i} \remEnd}} C_{p-1} (\abs{\mb{\tau} \oplus \mb{\tau}_{i} \remEnd \remBeg}^{-1} + \abs{\mb{\tau} \oplus \mb{\tau}_{i} \remEnd \remEnd}^{-1})^{1/2} \delta_{i \neq p+2}                                      \\
                                                                       & \leq \frac{2 \sqrt{2} C_{p-1}}{p - 1} h(\mb{\tau})^{-3/2}.
    \end{align*}
\end{proof}

\subsection{Proof of \texorpdfstring{\lem{holder}}{Lemma~\ref{lem:holder}}}
\label{app:holder}

\begin{proof}
    Let $p \geq 0$ and $\mb{\sigma}, \mb{\tau} \in \md{K}_{p+2}$. It is always possible to connect $\mb{\sigma}$ and $\mb{\tau}$ by changing a single knot at a time while preserving the minimum mesh size $h \doteq h(\mb{\sigma}, \mb{\tau})$ for all the intermediate knot vectors. For the four types of inequalities, we start by perturbating a single knot and we conclude using the triangle and Hölder or Cauchy-Schwarz inequalities. By symmetry in $\mb{\sigma}$ and $\mb{\tau}$, we can also restrict our attention to positive perturbations.

    Let $k \in \rg{1}{p+2}$ and $\epsilon > 0$. We consider the perturbation of the $k$-th knot by $\epsilon$, which we write $\mb{\tau} + \epsilon \mb{\delta}_{k}$ such that $(\mb{\tau} + \epsilon \mb{\delta}_{k})_{i} = \mb{\tau}_{i} + \epsilon \delta_{i = k}$. From the remark above, we can assume that $h(\mb{\tau} + \epsilon \mb{\delta}_{k}) \geq h_{\min}$ so in particular $\epsilon$ must be such that $\mb{\tau}_{k-1} + h_{\min} \leq \mb{\tau}_{k} + \epsilon \leq \mb{\tau}_{k+1} - h_{\min}$, with conventions $\mb{\tau}_{0} = -\infty$ and $\mb{\tau}_{p+3} = \infty$. For all $x \in \RR$, we define $f_{x}: y \mapsto (y - x)_{+}^{p}$. For conciseness, we write
    $$\Delta_{k} B_{p}(\mb{\tau}, \epsilon) \doteq B_{p}(\mb{\tau} + \epsilon \mb{\delta}_{k}) - B_{p}(\mb{\tau}).$$
    We also introduce $\mb{\tau}_{\epsilon} \doteq \mb{\tau} \oplus (\mb{\tau}_{k} + \epsilon) \in \md{K}_{p+3}$ for convenience.

    We derive general bounds for arbitrary degree. As we will see, these bounds sometimes fail for low orders and these cases need special treatment. These cases are identified within the proof, and the corresponding arguments are given in \cite{thesis}.

    {\bfseries Part 1: Bound for $B_{p}$.}
    The formula for the difference of two divided differences provides
    \begin{align*}
        \Delta_{k} B_{p}(\mb{\tau}, \epsilon)(x) & = \epsilon [(\mb{\tau} \remEnd) \oplus (\mb{\tau}_{k} + \epsilon)] f_{x} \delta_{k \neq p + 2} - \epsilon [(\mb{\tau} \remBeg) \oplus (\mb{\tau}_{k} + \epsilon)] f_{x} \delta_{k \neq 1}, \\
                                                 & = \epsilon [(\mb{\tau} \oplus (\mb{\tau}_{k} + \epsilon)) \remEnd] f_{x} \delta_{k \neq p + 2} - \epsilon [(\mb{\tau} \oplus (\mb{\tau}_{k} + \epsilon)) \remBeg] f_{x} \delta_{k \neq 1},
    \end{align*}
    Note that $\remBeg$ and $\remEnd$ commute with $\oplus$ in both cases due to the restriction on $k$, and by design of $\epsilon$. From the definition of $N_{p}$ and the positivity of $\epsilon$, we rewrite
    \begin{equation}
        \label{eq:decomp-spatial}
        \Delta_{k} B_{p}(\mb{\tau}, \epsilon) = \epsilon (N_{p}(\mb{\tau}_{\epsilon} \remEnd) \delta_{k \neq p + 2} - N_{p}(\mb{\tau}_{\epsilon} \remBeg) \delta_{k \neq 1}).
    \end{equation}
    Using the non-negativitiy of N-splines to ignore the inner product between $N_{p}(\mb{\tau}_{\epsilon} \remBeg)$ and $N_{p}(\mb{\tau}_{\epsilon} \remEnd)$ in the expansion of the squared norm of $\Delta_{k} B_{p}(\mb{\tau}, \epsilon)$, and the bound for the $L^{2}$ norm of a B-spline \eqref{eq:bound-bspline-norm}, we then find
    \begin{align*}
        \norm{\Delta_{k} B_{p}(\mb{\tau}, \epsilon)}_{L^{2}(\RR)}^{2} & \leq \epsilon^{2} (\norm{N_{p}(\mb{\tau}_{\epsilon} \remEnd)}_{L^{2}(\RR)}^{2} \delta_{k \neq p + 2} + \norm{N_{p}(\mb{\tau}_{\epsilon} \remBeg)}_{L^{2}(\RR)}^{2} \delta_{k \neq 1}) \\
                                                                      & \leq \epsilon^{2} C_{p}^{2} (\abs{\mb{\tau}_{\epsilon} \remEnd}^{-1} \delta_{k \neq p+2} + \abs{\mb{\tau}_{\epsilon} \remBeg}^{-1} \delta_{k \neq 1}).
    \end{align*}
    We separate two cases depending on $p$.

        {\bfseries $\bullet$ Case 1 ($p = 0$).}
    If $p = 0$, then a simple examination of the cases $k = 1$ and $k = 2$ shows that the right-hand side is equal to $\epsilon C_{0}^{2}$. Using the triangle inequality to sum over the two perturbations and Hölder inequality with $p = 4$ and $q = 4/3$, we find
    $$\norm{B_{0}(\mb{\sigma}) - B_{0}(\mb{\tau})}_{L^{2}(\RR)} \leq C_{0} (\abs{\mb{\sigma}_{1} - \mb{\tau}_{1}}^{1/2} + \abs{\mb{\sigma}_{2} - \mb{\tau}_{2}}^{1/2}) \leq \norm{(1, 1)}_{4/3} C_{0} \norm{\mb{\sigma} - \mb{\tau}}_{2}^{1/2}$$
    for all $\mb{\sigma}, \mb{\tau} \in \md{K}_{2}$. The constant involving a $4/3$-norm is smaller than $2$.

        {\bfseries $\bullet$ Case 2 ($p \geq 1$).}
    If $p \geq 1$, the lower bound for the span of edited knots \eqref{eq:bound-knots} yields
    $$\norm{\Delta_{k} B_{p}(\mb{\tau}, \epsilon)}_{L^{2}(\RR)}^{2} \leq \frac{\delta_{k \neq 1} + \delta_{k \neq p+2}}{p} C_{p}^{2} h^{-1} \epsilon^{2}.$$
    Using the triangle inequality again to sum over the $p+2$ perturbations and the Cauchy-Schwarz inequality, we finally reach
    $$\norm{B_{p}(\mb{\sigma}) - B_{p}(\mb{\tau})}_{L^{2}(\RR)} \leq \frac{1}{\sqrt{p}} C_{p} h^{-1/2} \sum_{k = 1}^{p + 2} (\delta_{k \neq 1} + \delta_{k \neq p+2})^{1/2} \abs{\mb{\sigma}_{k} - \mb{\tau}_{k}} \leq \sqrt{\frac{2 (p + 1)}{p}} C_{p} h^{-1/2} \norm{\mb{\sigma} - \mb{\tau}}_{2},$$
    for all $\mb{\sigma}, \mb{\tau} \in \md{K}_{p+2}$..

        {\bfseries Part 2: Bound for $\partial_{x} B_{p}$.}
    Assume $p \geq 1$. Taking the spatial derivative of \eqref{eq:decomp-spatial} and using the expression of the spatial derivative of a B-spline, we find
    \begin{align*}
        \partial_{x} \Delta_{k} B_{p}(\mb{\tau}, \epsilon) & = -\frac{p \epsilon}{\abs{\mb{\tau}_{\epsilon} \remEnd}} \left(N_{p-1}(\mb{\tau}_{\epsilon} \remEnd \remBeg) - N_{p-1}(\mb{\tau}_{\epsilon} \remEnd \remEnd)\right) \delta_{k \neq p+2} \\
                                                           & + \frac{p \epsilon}{\abs{\mb{\tau}_{\epsilon} \remBeg}} \left(N_{p-1}(\mb{\tau}_{\epsilon} \remBeg \remBeg) - N_{p-1}(\mb{\tau}_{\epsilon} \remBeg \remEnd)\right) \delta_{k \neq 1}.
    \end{align*}
    We bound the $L^{2}$ norm of $\partial_{x} \Delta_{k} B_{p}(\mb{\tau}, \epsilon)$ using the triangle inequality to split these two terms. Using the non-negativitiy of $N_{p}$ and the bound for the $L^{2}$ norm of a B-spline \eqref{eq:bound-bspline-norm}, we reach
    \begin{align*}
        \norm{\partial_{x} \Delta_{k} B_{p}(\mb{\tau}, \epsilon)}_{L^{2}(\RR)} & \leq \frac{p \epsilon}{\abs{\mb{\tau}_{\epsilon} \remEnd}} C_{p-1} \left(\abs{\mb{\tau}_{\epsilon} \remEnd \remBeg}^{-1} + \abs{\mb{\tau}_{\epsilon} \remEnd \remEnd}^{-1}\right)^{1/2} \delta_{k \neq p + 2} \\
                                                                               & + \frac{p \epsilon}{\abs{\mb{\tau}_{\epsilon} \remBeg}} C_{p-1} \left(\abs{\mb{\tau}_{\epsilon} \remBeg \remBeg}^{-1} + \abs{\mb{\tau}_{\epsilon} \remBeg \remEnd}^{-1}\right)^{1/2} \delta_{k \neq 1}.
    \end{align*}
    Here again, we separate two cases depending on the value of $p$.

        {\bfseries $\bullet$ Case 1 ($p = 1$).}
    If $p = 1$ and $k = 1$, the right-hand side simplifies to
    \begin{align*}
        \norm{\partial_{x} \Delta_{1} B_{1}(\mb{\tau}, \epsilon)}_{L^{2}(\RR)} & \leq \frac{C_{0} \epsilon}{\mb{\tau}_{2} - \mb{\tau}_{1}} \left(\frac{1}{\mb{\tau}_{2} - \mb{\tau}_{1} - \epsilon} + \frac{1}{\epsilon}\right)^{1/2} \\
                                                                               & = \frac{C_{0} \epsilon^{1/2}}{(\mb{\tau}_{2} - \mb{\tau}_{1})^{1/2} (\mb{\tau}_{2} - \mb{\tau}_{1} - \epsilon)^{1/2}}                                \\
                                                                               & \leq C_{0} h^{-1} \epsilon^{1/2}.
    \end{align*}
    The same bound is obtained for the perturbation with respect to the two other knots, up to a factor $2$ for the second knot. Using the Hölder inequality, we conclude
    $$\norm{\partial_{x} B_{1}(\mb{\sigma}) - \partial_{x} B_{1}(\mb{\tau})}_{L^{2}(\RR)} \leq \norm{(1, 2, 1)}_{4/3} C_{0} h^{-1} \norm{\mb{\sigma} - \mb{\tau}}_{2}^{1/2}$$
    for all $\mb{\sigma}, \mb{\tau} \in \md{K}_{3}$. The constant involving a $4/3$-norm is smaller than $4$.

        {\bfseries $\bullet$ Case 2 ($p \geq 2$).}
    If $p \geq 2$, then using the lower bound on the knot span of edited knots \eqref{eq:bound-knots}, we infer
    $$\norm{\partial_{x} \Delta_{k} B_{p}(\mb{\tau}, \epsilon)}_{L^{2}(\RR)} \leq (\delta_{k \neq 1} + \delta_{k \neq p + 2}) \sqrt{\frac{2}{p - 1}} C_{p-1} h^{-3/2} \epsilon.$$
    Again, the triangle and Cauchy-Schwarz inequalities provide
    $$\norm{\partial_{x} B_{p}(\mb{\sigma}) - \partial_{x} B_{p}(\mb{\tau})}_{L^{2}(\RR)} \leq 2 \sqrt{\frac{p + 1}{p - 1}} C_{p-1} h^{-3/2} \norm{\mb{\sigma} - \mb{\tau}}_{2}$$
    for all $\mb{\sigma}, \mb{\tau} \in \md{K}_{p+2}$.

        {\bfseries Part 3: Bound for $\partial_{i} B_{p}$.}
    Assume $p \geq 1$ and let $i \in \{1, \ldots, p+2\}$. Expressing $B_{p}$ via divided differences, we decompose
    \begin{align*}
        \partial_{i} \Delta_{k} B_{p}(\mb{\tau}, \epsilon) & = ([(\mb{\tau} + \epsilon \mb{\delta}_{k}) \remBeg \oplus (\mb{\tau}_{i} + \epsilon \delta_{i = k})] f_{x} - [\mb{\tau} \remBeg \oplus \mb{\tau}_{i}] f_{x}) \delta_{i \neq 1} \delta_{k \neq 1}      \\
                                                           & + ([(\mb{\tau} + \epsilon \mb{\delta}_{k}) \remEnd \oplus (\mb{\tau}_{i} + \epsilon \delta_{i = k})] f_{x} - [\mb{\tau} \remEnd \oplus \mb{\tau}_{i}] f_{x}) \delta_{i \neq p+2} \delta_{k \neq p+2}.
    \end{align*}
    Note that $\remBeg$ and $\remEnd$ commute with $\oplus$ in both cases due to the restriction on $k$ and $i$, and by design of $\epsilon$. The case $k = i$ requires special care as $\epsilon$ appears twice in $(\mb{\tau} + \epsilon \mb{\delta}_{k}) \oplus (\mb{\tau}_{i} + \epsilon \delta_{i = k})$. To deal with one $\epsilon$ at a time, we consider the intermediate knot vector $(\mb{\tau} + \epsilon \mb{\delta}_{k}) \oplus \mb{\tau}_{i}$ and rewrite
    \begin{align*}
        \partial_{i} \Delta_{k} B_{p}(\mb{\tau}, \epsilon) & = (\epsilon [\mb{\tau}_{\epsilon \epsilon k} \remBeg] f_{x} \delta_{i = k}  + \epsilon [\mb{\tau}_{\epsilon k i} \remBeg] f_{x}) \delta_{i \neq 1} \delta_{k \neq 1}     \\
                                                           & - (\epsilon [\mb{\tau}_{\epsilon \epsilon k} \remEnd] f_{x} \delta_{i = k} + \epsilon [\mb{\tau}_{\epsilon k i} \remEnd] f_{x}) \delta_{i \neq p+2} \delta_{k \neq p+2},
    \end{align*}
    where $\mb{\tau}_{\epsilon k i} \doteq \mb{\tau} \oplus (\mb{\tau}_{k} + \epsilon) \oplus \mb{\tau}_{i}$ and $\mb{\tau}_{\epsilon \epsilon k} \doteq \mb{\tau} \oplus (\mb{\tau}_{k} + \epsilon) \oplus (\mb{\tau}_{k} + \epsilon)$. We can now rewrite this expression in terms of N-splines:
    \begin{align}
        \label{eq:decomp-param}
        \partial_{i} \Delta_{k} B_{p}(\mb{\tau}, \epsilon) & = \frac{\epsilon}{\abs{\mb{\tau}_{\epsilon \epsilon k} \remBeg}} (N_{p}(\mb{\tau}_{\epsilon \epsilon k} \remBeg \remBeg) - N_{p}(\mb{\tau}_{\epsilon \epsilon k} \remBeg \remEnd)) \delta_{i = k} \delta_{k \neq 1}                 \\
                                                           & - \frac{\epsilon}{\abs{\mb{\tau}_{\epsilon \epsilon k} \remEnd}} (N_{p}(\mb{\tau}_{\epsilon \epsilon k} \remEnd \remBeg) - N_{p}(\mb{\tau}_{\epsilon \epsilon k} \remEnd \remEnd)) \delta_{i = k} \delta_{k \neq p+2} \nonumber     \\
                                                           & + \frac{\epsilon}{\abs{\mb{\tau}_{\epsilon k i} \remBeg}} (N_{p}(\mb{\tau}_{\epsilon k i} \remBeg \remBeg) - N_{p}(\mb{\tau}_{\epsilon k i} \remBeg \remEnd)) \delta_{i \neq 1} \delta_{k \neq 1}                         \nonumber \\
                                                           & - \frac{\epsilon}{\abs{\mb{\tau}_{\epsilon k i} \remEnd}} (N_{p}(\mb{\tau}_{\epsilon k i} \remEnd \remBeg) - N_{p}(\mb{\tau}_{\epsilon k i} \remEnd \remEnd)) \delta_{i \neq p+2} \delta_{k \neq p+2}. \nonumber
    \end{align}
    The case $p = 1$ requires a separate treatment. Assuming $p \geq 2$, using the lower bound on the knot spans provided by \eqref{eq:bound-knots} and applying the same method as before, we find
    $$\norm{\partial_{i} \Delta_{k} B_{p}(\mb{\tau}, \epsilon)}_{L^{2}(\RR)} \leq \frac{\sqrt{2} C_{p} D_{k, i}}{p \sqrt{p - 1}} h^{-3/2} \epsilon,$$
    where $D_{k, i} = (1 + \delta_{i = k}) (\delta_{i \neq 1} \delta_{k \neq 1} + \delta_{i \neq p+2} \delta_{k \neq p+2})$. Using the Cauchy-Schwarz inequality, we find
    $$\norm{\partial_{i} B_{p}(\mb{\sigma}) - \partial_{i} B_{p}(\mb{\tau})}_{L^{2}(\RR)} \leq \frac{\sqrt{2} C_{p} D_{i}}{p \sqrt{p - 1}} h^{-3/2} \norm{\mb{\sigma} - \mb{\tau}}_{2},$$
    where $D_{i}^{2} = \sum_{k = 1}^{p + 2} D_{k, i}^{2}$. We compute $D_{i}^{2} = p + 4$ if $i \in \{1, p + 2\}$ and $D_{i}^{2} = 4 p + 14$ otherwise. Altogether, $D_{i}$ is bounded by $\sqrt{4 p + 14}$.

        {\bfseries Part 4: Bound for $\partial_{i} \partial_{x} B_{p}$.}
    Assuming $p \geq 2$ and taking the spatial derivative of \eqref{eq:decomp-param}, we obtain

    \resizebox{0.8\linewidth}{!}{
        \begin{minipage}{\linewidth}
            \begin{align}
                \label{eq:decomp-spatial-param}
                \partial_{i} \partial_{x} \Delta_{k} B_{p}(\mb{\tau}, \epsilon) & = - \frac{p \epsilon}{\abs{\mb{\tau}_{\epsilon \epsilon k} \remBeg} \abs{\mb{\tau}_{\epsilon \epsilon k} \remBeg \remBeg}} (N_{p-1}(\mb{\tau}_{\epsilon \epsilon k} \remBeg \remBeg \remBeg) - N_{p-1}(\mb{\tau}_{\epsilon \epsilon k} \remBeg \remBeg \remEnd)) \delta_{i = k} \delta_{k \neq 1}               \\
                                                                                & + \frac{p \epsilon}{\abs{\mb{\tau}_{\epsilon \epsilon k} \remBeg} \abs{\mb{\tau}_{\epsilon \epsilon k} \remBeg \remEnd}} (N_{p - 1}(\mb{\tau}_{\epsilon \epsilon k} \remBeg \remEnd \remBeg) - N_{p - 1}(\mb{\tau}_{\epsilon \epsilon k} \remBeg \remEnd \remEnd)) \delta_{i = k} \delta_{k \neq 1} \nonumber   \\
                                                                                & + \frac{p \epsilon}{\abs{\mb{\tau}_{\epsilon \epsilon k} \remEnd} \abs{\mb{\tau}_{\epsilon \epsilon k} \remEnd \remBeg}} (N_{p - 1}(\mb{\tau}_{\epsilon \epsilon k} \remEnd \remBeg \remBeg) - N_{p - 1}(\mb{\tau}_{\epsilon \epsilon k} \remEnd \remBeg \remEnd)) \delta_{i = k} \delta_{k \neq p+2} \nonumber \\
                                                                                & - \frac{p \epsilon}{\abs{\mb{\tau}_{\epsilon \epsilon k} \remEnd} \abs{\mb{\tau}_{\epsilon \epsilon k} \remEnd \remEnd}} (N_{p - 1}(\mb{\tau}_{\epsilon \epsilon k} \remEnd \remEnd \remBeg) - N_{p - 1}(\mb{\tau}_{\epsilon \epsilon k} \remEnd \remEnd \remEnd)) \delta_{i = k} \delta_{k \neq p+2} \nonumber \\
                                                                                & - \frac{p \epsilon}{\abs{\mb{\tau}_{\epsilon k i} \remBeg} \abs{\mb{\tau}_{\epsilon k i} \remBeg \remBeg}} (N_{p - 1}(\mb{\tau}_{\epsilon k i} \remBeg \remBeg \remBeg) - N_{p - 1}(\mb{\tau}_{\epsilon k i} \remBeg \remBeg \remEnd)) \delta_{i \neq 1} \delta_{k \neq 1} \nonumber                            \\
                                                                                & + \frac{p \epsilon}{\abs{\mb{\tau}_{\epsilon k i} \remBeg} \abs{\mb{\tau}_{\epsilon k i} \remBeg \remEnd}} (N_{p - 1}(\mb{\tau}_{\epsilon k i} \remBeg \remEnd \remBeg) - N_{p - 1}(\mb{\tau}_{\epsilon k i} \remBeg \remEnd \remEnd)) \delta_{i \neq 1} \delta_{k \neq 1} \nonumber                            \\
                                                                                & + \frac{p \epsilon}{\abs{\mb{\tau}_{\epsilon k i} \remEnd} \abs{\mb{\tau}_{\epsilon k i} \remEnd \remBeg}} (N_{p - 1}(\mb{\tau}_{\epsilon k i} \remEnd \remBeg \remBeg) - N_{p - 1}(\mb{\tau}_{\epsilon k i} \remEnd \remBeg \remEnd)) \delta_{i \neq p+2} \delta_{k \neq p+2} \nonumber                        \\
                                                                                & - \frac{p \epsilon}{\abs{\mb{\tau}_{\epsilon k i} \remEnd} \abs{\mb{\tau}_{\epsilon k i} \remEnd \remEnd}} (N_{p - 1}(\mb{\tau}_{\epsilon k i} \remEnd \remEnd \remBeg) - N_{p - 1}(\mb{\tau}_{\epsilon k i} \remEnd \remEnd \remEnd)) \delta_{i \neq p+2} \delta_{k \neq p+2}. \nonumber
            \end{align}
        \end{minipage}
    }

    The case $p = 2$ requires a separate treatment. When $p \geq 3$, using the non-negativity of $N_{p}$, proceeding as before with \eqref{eq:bound-bspline-norm} and \eqref{eq:bound-knots}, we find
    $$\norm{\partial_{i} \partial_{x} \Delta_{k} B_{p}(\mb{\tau}, \epsilon)}_{L^{2}(\RR)} \leq \frac{2 \sqrt{2} C_{p-1} D_{k, i}}{(p - 1) \sqrt{p - 2}} h^{-5/2} \epsilon,$$
    where $D_{k, i}$ is the same constant as in part 3. We thus infer
    $$\norm{\partial_{i} \partial_{x} B_{p}(\mb{\sigma}) - \partial_{i} \partial_{x} B_{p}(\mb{\tau})}_{L^{2}(\RR)} \leq \frac{2 \sqrt{2} C_{p-1} D_{i}}{(p - 1) \sqrt{p - 2}} h^{-5/2} \norm{\mb{\sigma} - \mb{\tau}}_{2},$$
    where $D_{i}$ is the same constant as above.
\end{proof}

\printbibliography

\end{document}